\documentclass[12pt]{amsart}


\usepackage{amsmath,amsthm}
\usepackage{amssymb}

\usepackage{enumitem}
\setlist{label={\roman{enumi}\kern1pt$)$}}

\usepackage{xcolor}
\usepackage{graphicx}

\usepackage[T1]{fontenc}



\pagestyle{myheadings}
\markboth{M. L. Arias, M. Contino, A. Maestripieri and S. Marcantognini}{Idempotent linear relations}



\newtheorem{theorem}{Theorem}[section]
\newtheorem{corollary}[theorem]{Corollary}
\newtheorem{lemma}[theorem]{Lemma}
\newtheorem{proposition}[theorem]{Proposition}



\theoremstyle{definition}
\newtheorem{definition}[theorem]{Definition}
\newtheorem{remark}[theorem]{Remark}
\newtheorem{example}[theorem]{Example}



\numberwithin{equation}{section}


\frenchspacing

\textwidth=13.5cm
\textheight=23cm
\parindent=16pt
\oddsidemargin=-0.5cm
\evensidemargin=-0.5cm
\topmargin=-0.5cm




\newcommand{\St}{\mathcal{S}}

\newcommand{\M}{\mathcal{M}}
\newcommand{\HH}{\mathcal{H}}
\newcommand{\KK}{\mathcal{K}}

\newcommand{\N}{\mathcal{N}}
\newcommand{\X}{\mathcal{X}}
\newcommand{\Y}{\mathcal{Y}}
\newcommand{\Z}{\mathcal{Z}}
\newcommand{\mc}[1]{\mathcal{#1}}
\newcommand{\T}{\mathcal{T}}
\newcommand{\ol}{\overline}
\newcommand{\clran}{\ol{\mathrm{ran}}\,}

\newcommand{\cldom}{\ol{\mathrm{dom}}\,}

\newcommand{\PI}[2]{\left\langle \,#1 , #2\, \right\rangle}

\newcommand{\PMN}{P_{\M,\N}}
\newcommand{\PXY}{P_{\X,\Y}}
\DeclareMathOperator{\ran}{ran}
\DeclareMathOperator{\dom}{dom}

\DeclareMathOperator{\mul}{mul}
\DeclareMathOperator{\Id}{Id}
\DeclareMathOperator{\Sp}{Sp}


\newcommand{\lr}{\mathrm{lr}}


\begin{document}


\baselineskip=17pt


\title{Idempotent linear relations}

\author[M. L. Arias]{M. Laura Arias}
\address{Instituto Argentino de Matemática Alberto P. Calderón- CONICET\\
Buenos Aires, Argentina \\ and\\ Depto. de Matemática, Facultad de Ingeniería, Universidad de Buenos Aires\\
Buenos Aires, Argentina}
\email{lauraarias@conicet.gov.ar}

\author[M. Contino]{Maximiliano Contino}
\address{Instituto Argentino de Matemática Alberto P. Calderón- CONICET\\
	Buenos Aires, Argentina \\ and \\ Depto. de Matemática, Facultad de Ingeniería, Universidad de Buenos Aires\\
	Buenos Aires, Argentina}
\email{mcontino@fi.uba.ar }

\author[A. Maestripieri]{Alejandra Maestripieri}
\address{Instituto Argentino de Matemática Alberto P. Calderón- CONICET\\
	Buenos Aires, Argentina \\ and \\ Depto. de Matemática, Facultad de Ingeniería, Universidad de Buenos Aires\\
	Buenos Aires, Argentina}
\email{amaestri@fi.uba.ar}

\author[S. Marcantognini]{Stefania Marcantognini}
\address{Instituto Argentino de Matemática Alberto P. Calderón- CONICET\\
	Buenos Aires, Argentina\\ and \\
	Universidad Nacional de General Sarmiento – Instituto de Ciencias\\
	Los Polvorines, Pcia. de Buenos Aires, Argentina}
\email{smarcantognini@ungs.edu.ar }

\begin{abstract}
	A linear relation $E$ acting on a Hilbert space is idempotent if $E^2=E.$ A triplet of subspaces is needed to characterize a given idempotent: $(\ran E, \ran(I-E), \dom E),$ or equivalently,  $(\ker(I-E), \ker E, \mul E).$ The relations satisfying the inclusions $E^2 \subseteq E$ (sub-idempotent) or $E \subseteq E^2$ (super-idempotent) play an important role.  Lastly,  the adjoint and the closure of an idempotent linear relation are studied.
\end{abstract}

\maketitle


\renewcommand{\thefootnote}{}

\footnote{2020 \emph{Mathematics Subject Classification}: Primary 47A06; Secondary 47A05.}

\footnote{\emph{Key words and phrases}: multivalued linear operators, linear relations, projections, idempotents.}

\renewcommand{\thefootnote}{\arabic{footnote}}
\setcounter{footnote}{0}


\section{Introduction}

The introduction of linear relations by von Neumann \cite{vN} was motivated by the need to define the adjoint of a non-densely defined operator
and in considering the inverses of certain operators.
Semi-projections, which form the linear relation counterpart of the class of projection operators, appear when solving least-squares problems of linear relations (see \cite{Na2}).  
Linear relations provide the appropriate framework when dealing with control problems subject to generalized or nonstandard boundary conditions. In particular, they naturally occur if the normal
equations, which are used to characterize solutions
of various standard constrained or unconstrained least-squares problems, involve the adjoint
of a {non-densely defined }linear operator (cf. \cite {leenashed}). 

 A  linear operator $E$ is said to be a {\it projection} if $E^2 = E$, that is, if $\dom E$ (the domain of $E$) is $E-$invariant and $E^2x = Ex$ for all $x\in \dom E$. For any given projection $E$, if $\M := \ran E$ (the range of $E$) and $\N := \ker E$ (the kernel of $E$) then
 $${\rm (1)}~\M \subseteq \dom E, \mbox{\rm and  
 (2)}~\M\cap\N=\{0\}.$$
Unbounded (even non closable) projections were first considered by \^Ota \cite{Ota}. He showed that any projection $E$ is fully determined by its range and kernel, and that the projection determined by $(\M,\N)$ is closed if and only if both $\M$ and $\N$ are closed. Further investigations on closed densely defined projections were carried out by And\^o \cite{Ando}. We extended this work to \emph{semiclosed} projections in \cite{Semi}. 


Cross and Wilcox \cite{Cross} and Labrousse \cite{Labrousse} studied the linear relations satisfying (1) and $E^2=E$. Such linear relations are called {\it semi-projections} \cite{Labrousse} or {\it multivalued linear projections} \cite{Cross}.  As with projections, semi-projections  are fully characterized by the range and kernel, and  in this case the multivalued part is given by  their intersection. So a semi-projection is a projection if and only if (2) holds. Dropping not just (2), but both (1) and (2), the result is an {\it idempotent} relation; that is, a linear relation $E$ such that $E^2=E.$

Any idempotent $E$ verifies the twofold inclusion $E^2\subseteq E \subseteq E^2$. When a relation $E$ only satisfies the left inclusion, it is termed  {\it sub-idempotent}. Similarly,  $E$ is  {\it super-idempotent} if instead  the other inclusion holds.

The purpose of this paper is to study idempotents, as well as sub- and super-idempotents. Various characterizations of these classes are given, as well as adjoints and closures of relations in these classes. Much was already done for the class of semi-projections by Cross and Wilcox \cite{Cross} (see also \cite{Labrousse}).

Section 2 serves to introduce the notation and to give some preliminary results. In Section 3 we show that for a full description, three subspaces are needed; $(\ran E, \ran(I-E), \dom E)$ for sub-idempotents and  $(\ker(I-E), \ker E, \mul E)$ for super-idempotents.
 Then we turn our attention to the description of $E^2$ when $E$ is either sub- or super-idempotent, and we establish in either case that $E^2$ is an idempotent. In Section 4 the results of Section 3 are applied to obtain several characterizations of idempotents. The main results of this section concern the representation of the class of idempotents. These include two in which a triplet of subspaces uniquely determines an idempotent whenever the so-called {\it idempotency condition} is satisfied. Section 5 looks at the closure and adjoint of a relation $E$ which is one of the three classes. In general these operations do not yield idempotents. Necessary and sufficient conditions are given for $E^*$ and $\overline{E}$ to be idempotent, and we characterize those idempotents that are closed. Throughout, examples are presented illustrating the very rich structure of all these classes.

\section{Preliminaries}
\label{sec:preliminaries}

Throughout, $\HH,$ $\KK$ and  $\mc E$  are complex and separable Hilbert spaces.  As usual, the direct sum of two subspaces $\M$ and $\N$ of a Hilbert space $\HH$ is indicated by $\M \dotplus \N.$ 
The orthogonal complement of a subspace $\mc{M} \subseteq \HH$ is written as $\mc{M}^\perp,$ or $\HH \ominus \M$ interchangeably. 

We  consider the  inner product on $\HH \times \KK$ 
$$\PI{(h,k)}{(h',k')}=\PI{h}{h'}+\PI{k}{k'}, \ (h,k), (h',k') \in \HH \times \KK,$$
with the associated norm  $\Vert (h,k)\Vert^2=\Vert h \Vert^2+\Vert k \Vert^2.$

For $\St$ and $\mc T$  closed subspaces of $\HH,$ Friedrichs  \cite{Friedrichs} defined the cosine of the \emph{angle} between $\St$ and $\mc T$ as 
\begin{align*}
c(\St,\mc T):=\sup \left\{\vert \PI{x}{y}\vert \! \!: x\in \St\ominus(\St \cap \mc T), \! y\in\mc T \ominus(\St \cap \mc T),  \|x\|\ \!, \! \|y\|\leq 1 \right\}.
\end{align*}
On the other hand, the \emph{minimal angle} between $\St$ and $\mc T$ was defined by Dixmier \cite{Dixmier} as the one whose cosine is
$$c_0(\St,\mc T):=\sup \left\{|\PI{x}{y}|\,:\, x\in \St, \; y\in\mc T, \|x\|\ \!, \! \|y\|\leq 1 \right\}.$$

In general, $c(\St,\mc T)\leq c_0(\St,\mc T)$. However, when $\St\cap \mc T=\{0\}$ both angles coincide.

\begin{theorem}[{\cite[Theorem 13]{Deutsch}}] \label{prop angulos} Let $\St,\ \mc T$ be closed subspaces of $\HH.$ The following are equivalent:
\begin{enumerate}
\item $c(\St,\mc T)<1;$
\item $\St + \mc T$ is closed;
\item $\St^{\perp}+\T^{\perp}$ is closed.
\end{enumerate}
\end{theorem}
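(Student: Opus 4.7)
My plan is to prove the two equivalences $(1) \Leftrightarrow (2)$ and $(2) \Leftrightarrow (3)$ in turn.

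For $(1) \Leftrightarrow (2)$, I would first reduce to the case $\St \cap \T = \{0\}$: writing $\St_0 := \St \ominus (\St \cap \T)$ and $\T_0 := \T \ominus (\St \cap \T)$, one has $\St_0 \cap \T_0 = \{0\}$, $c(\St, \T) = c_0(\St_0, \T_0)$ by the very definition, and the orthogonal decomposition $\St + \T = (\St \cap \T) \oplus (\St_0 + \T_0)$ shows that $\St + \T$ is closed if and only if $\St_0 + \T_0$ is. In the disjoint case, the direct estimate
\[\|x + y\|^2 \geq (1 - c_0(\St_0, \T_0))(\|x\|^2 + \|y\|^2), \quad x \in \St_0,\ y \in \T_0,\]
obtained by expanding the inner product and using the definition of $c_0$, shows that under $(1)$ the continuous bijection $\St_0 \oplus \T_0 \to \St_0 + \T_0$, $(x, y) \mapsto x + y$, is bounded below and hence has closed range. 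Conversely, assuming $\St_0 + \T_0$ is closed, apply the open mapping theorem to this continuous bijection to obtain a bounded inverse, from which $c_0 < 1$ follows via an estimate on $\|x\|,\|y\|$ in terms of $\|x+y\|$.

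For $(2) \Leftrightarrow (3)$, the main point is the symmetry $c(\St, \T) = c(\St^\perp, \T^\perp)$, after which the equivalence follows at once by applying $(1) \Leftrightarrow (2)$ both to $(\St, \T)$ and to $(\St^\perp, \T^\perp)$. To prove this symmetry, a Cauchy--Schwarz calculation yields $c_0(\St, \T) = \|P_\T P_\St\|$, so that $c(\St, \T)$ equals the norm of $P_{\T_0} P_{\St_0}$. An analogous operator formula holds for $c(\St^\perp, \T^\perp)$, and the two norms coincide via the orthogonal decomposition $\HH = (\St \cap \T) \oplus (\St^\perp \cap \T^\perp) \oplus \mc R$, on whose ``generic'' piece $\mc R$ the relevant restrictions of $P_\T P_\St$ and $P_{\T^\perp} P_{\St^\perp}$ are related by a simple operator identity.

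The main obstacle is this last identity. The duality $c(\St, \T) = c(\St^\perp, \T^\perp)$ is the genuine content of the theorem; it cannot be deduced directly from the definitions and requires a careful analysis of the joint action of $P_\St$ and $P_\T$ on $\mc R$, using either a geometric argument or operator-theoretic manipulations connecting $P_\T P_\St$ with $(I - P_\T)(I - P_\St)$ on this common reducing subspace.
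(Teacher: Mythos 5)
The paper does not prove this statement; it is imported verbatim from Deutsch, so your argument has to stand on its own. Your $(1)\Leftrightarrow(2)$ does stand: the reduction to $\St_0=\St\ominus(\St\cap\T)$, $\T_0=\T\ominus(\St\cap\T)$ is legitimate (one has the orthogonal splitting $\St+\T=(\St\cap\T)\oplus(\St_0+\T_0)$, and a subspace orthogonal to a closed subspace is closed iff their orthogonal sum is), $c(\St,\T)=c_0(\St_0,\T_0)$ is immediate from the definition, the lower bound $\|x+y\|^2\geq(1-c_0)(\|x\|^2+\|y\|^2)$ is correct, and the converse via the open mapping theorem goes through once one rotates $y$ by a unimodular scalar to turn the resulting bound on $\mathrm{Re}\PI{x}{y}$ into one on $|\PI{x}{y}|$.

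The gap is in $(2)\Leftrightarrow(3)$. You reduce it to the identity $c(\St,\T)=c(\St^\perp,\T^\perp)$, which you correctly single out as the crux but do not prove. The sketch you offer is too coarse: the decomposition $\HH=(\St\cap\T)\oplus(\St^\perp\cap\T^\perp)\oplus\mc R$ leaves $\St\cap\T^\perp$ and $\St^\perp\cap\T$ inside $\mc R$, so $\mc R$ is not the generic part on which the promised ``simple operator identity'' relating $P_\T P_\St$ to $(I-P_\T)(I-P_\St)$ could hold, and the identity itself (due to Ljance and Pt\'{a}k) is a genuinely harder quantitative theorem than the qualitative equivalence you need. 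A cleaner route avoids it entirely: since $\St+\T=\T\oplus P_{\T^\perp}(\St)$ with $\T$ closed and orthogonal to $P_{\T^\perp}(\St)$, the sum $\St+\T$ is closed iff $\ran(P_{\T^\perp}P_\St)$ is closed; by the closed range theorem for bounded operators (the same fact the paper quotes as Theorem \ref{closedrange}) this holds iff $\ran\bigl((P_{\T^\perp}P_\St)^*\bigr)=\ran(P_{\St}P_{\T^\perp})=P_{\St}(\T^\perp)$ is closed, which by the first observation applied to the pair $(\T^\perp,\St^\perp)$ holds iff $\St^\perp+\T^\perp$ is closed. As written, your proof of $(2)\Leftrightarrow(3)$ is a reduction to an unproved, and strictly stronger, statement rather than a proof.
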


\begin{lemma} \label{lemaangulo}  Let $\St,\ \mc T, \mc  W$ be closed subspaces of $\HH$ such that $\mc T \subseteq \mc W$ and $\mc T \cap \St=\mc W \cap \St.$  Then 
$$c(\mc T, \St)\leq c(\mc W, \St).$$
\end{lemma}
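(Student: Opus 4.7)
The plan is to observe that with the hypothesis $\mc T \cap \St = \mc W \cap \St$, the ``common part'' that is subtracted off in the Friedrichs definition of both $c(\mc T, \St)$ and $c(\mc W, \St)$ is the same subspace. So, setting $\mc I := \mc T \cap \St = \mc W \cap \St,$ we have
\begin{align*}
c(\mc T, \St) &= \sup\bigl\{|\PI{x}{y}| : x \in \mc T \ominus \mc I,\ y \in \St \ominus \mc I,\ \|x\|,\|y\| \leq 1\bigr\},\\
c(\mc W, \St) &= \sup\bigl\{|\PI{x}{y}| : x \in \mc W \ominus \mc I,\ y \in \St \ominus \mc I,\ \|x\|,\|y\| \leq 1\bigr\}.
\end{align*}

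Next I would show that $\mc T \ominus \mc I \subseteq \mc W \ominus \mc I$. This is immediate: writing $\mc T \ominus \mc I = \mc T \cap \mc I^\perp$ and $\mc W \ominus \mc I = \mc W \cap \mc I^\perp$, any $x \in \mc T \cap \mc I^\perp$ belongs to $\mc W$ by the assumption $\mc T \subseteq \mc W$, and still lies in $\mc I^\perp$, so $x \in \mc W \cap \mc I^\perp$.

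Thus the set over which the supremum defining $c(\mc T, \St)$ is taken is contained in the set defining $c(\mc W, \St)$, and the inequality $c(\mc T, \St) \leq c(\mc W, \St)$ follows. There is really no obstacle here; the only thing to notice is that without the equality $\mc T \cap \St = \mc W \cap \St$ the inclusion $\mc T \ominus (\mc T \cap \St) \subseteq \mc W \ominus (\mc W \cap \St)$ would generally fail (because enlarging the piece to be removed would take vectors out of $\mc I^\perp$), so the hypothesis on the intersections is precisely what makes the argument go through.
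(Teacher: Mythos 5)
Your proof is correct: since $\mc T\cap\St=\mc W\cap\St=:\mc I$, the Friedrichs suprema for $c(\mc T,\St)$ and $c(\mc W,\St)$ range over $(\mc T\ominus\mc I)\times(\St\ominus\mc I)$ and $(\mc W\ominus\mc I)\times(\St\ominus\mc I)$ respectively, and $\mc T\ominus\mc I=\mc T\cap\mc I^{\perp}\subseteq\mc W\cap\mc I^{\perp}=\mc W\ominus\mc I$ gives the inequality at once. The paper states this lemma without proof, and your argument is exactly the natural one; your closing remark correctly identifies the hypothesis on the intersections as the point that makes the set inclusion work.
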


\begin{proposition}[{\cite[Proposition 2.3.3, Corollary 2.3.1]{Labrousse1980}}] \label{rangeop} Let $\M, \N$ be operator ranges such that $\M+\N$ is closed. Then
	\begin{enumerate}
		\item[1.] $\ol{\M\cap \N}=\ol{\M}\cap \ol{\N}.$
		\item[2.] $(\M\cap \N)^{\perp}=\M^{\perp}+\N^{\perp}.$
	\end{enumerate}
\end{proposition}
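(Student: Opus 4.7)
The plan is to prove item 1 first and then deduce item 2 by a short orthogonality computation. The key point is that the closedness of $\M+\N$, combined with the operator range structure of $\M$ and $\N$, will let me invoke the open mapping theorem to control $\M\cap\N$ quantitatively.

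Before attacking item 1, I would record a simple but useful consequence of the hypothesis: since $\M+\N$ is closed, one has $\overline{\M},\overline{\N}\subseteq\overline{\M+\N}=\M+\N$, so $\overline{\M}+\overline{\N}\subseteq \M+\N$, and the reverse inclusion is trivial. Hence $\overline{\M}+\overline{\N}=\M+\N$ is closed, and Theorem~\ref{prop angulos} applied to the closed subspaces $\overline{\M}$ and $\overline{\N}$ yields that $\M^\perp+\N^\perp=\overline{\M}^\perp+\overline{\N}^\perp$ is closed as well. This is the ingredient I will need at the very end.

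For item 1 only the inclusion $\overline{\M}\cap\overline{\N}\subseteq\overline{\M\cap\N}$ is nontrivial, and this is where the operator range hypothesis is essential. I would write $\M=\ran A$ and $\N=\ran B$ for bounded operators $A,B$ on $\HH$ and consider the bounded linear map
$$\Psi\colon \HH\oplus\HH\to\HH,\qquad \Psi(u,v):=Au-Bv.$$
Its range equals $\M+\N$, which is closed by hypothesis, so $\Psi$ is a bounded surjection from a Hilbert space onto the Banach space $\M+\N$. By the open mapping theorem there is $C>0$ such that every $s\in \M+\N$ admits a preimage $(u,v)$ with $\|(u,v)\|\leq C\|s\|$. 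Given $z\in\overline{\M}\cap\overline{\N}$, I would pick $u_k,v_k\in\HH$ with $Au_k\to z$ and $Bv_k\to z$, so that $s_k:=Au_k-Bv_k\to 0$; openness then produces $(u'_k,v'_k)\to 0$ with $Au'_k-Bv'_k=s_k$, and $z_k:=A(u_k-u'_k)=B(v_k-v'_k)$ lies in $\M\cap\N$ and converges to $z$, proving item 1.

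Item 2 then follows at once from item 1 together with the closedness of $\M^\perp+\N^\perp$ established above: using the standard identity $(\mc U\cap\mc V)^\perp=\overline{\mc U^\perp+\mc V^\perp}$ for closed subspaces,
$$(\M\cap\N)^\perp=(\overline{\M\cap\N})^\perp=(\overline{\M}\cap\overline{\N})^\perp=\overline{\M^\perp+\N^\perp}=\M^\perp+\N^\perp.$$
The main obstacle I expect is the application of the open mapping theorem in item 1: one must recognize that representing $\M$ and $\N$ as operator ranges realizes $\M+\N$ as the bounded surjective image of a Hilbert space, and then turn openness into a \emph{quantitative} correction of the approximating pair so that the two approximations $Au_k$ and $Bv_k$ can be adjusted by small vectors to match exactly, thereby producing a genuine element of $\M\cap\N$ close to $z$.
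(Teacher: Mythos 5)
Your proof is correct. Note that the paper does not prove this proposition at all: it is imported verbatim from Labrousse's 1980 paper (Proposition 2.3.3 and Corollary 2.3.1 there), so there is no internal argument to compare against. Your proposal supplies a valid self-contained proof along the standard lines: writing $\M=\ran A$, $\N=\ran B$ and observing that $\Psi(u,v)=Au-Bv$ is a bounded surjection of $\HH\oplus\HH$ onto the closed (hence complete) subspace $\M+\N$ is exactly the device that lets the open mapping theorem convert the qualitative statement ``$Au_k$ and $Bv_k$ both tend to $z$'' into a quantitative correction $(u_k',v_k')\to 0$ with $A(u_k-u_k')=B(v_k-v_k')\in\M\cap\N$ converging to $z$; this gives item 1. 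Your preliminary reduction $\ol{\M}+\ol{\N}=\M+\N$ is sound (both inclusions are immediate from closedness of $\M+\N$), and combining it with Theorem \ref{prop angulos} to get closedness of $\M^{\perp}+\N^{\perp}$, then with the identity $(\ol{\M}\cap\ol{\N})^{\perp}=\ol{\M^{\perp}+\N^{\perp}}$, correctly yields item 2. The only cosmetic caveat is that an operator range in $\HH$ is a priori the range of a bounded operator from some auxiliary Hilbert space; since one may always replace $A$ by $(AA^*)^{1/2}$, your assumption that $A,B$ act on $\HH$ itself is harmless.
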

\subsection*{Linear relations}

A linear relation from  $\HH$ into $\KK$ is a linear subspace $T$ of the cartesian product $\HH \times \KK.$ The set of linear relations from $\HH$ into $\KK$ will be denoted by $\lr(\HH,\KK),$ and $\lr(\HH):=\lr(\HH,\HH).$ 
The domain,  range, kernel or nullspace and multivalued part of $T\in \lr(\HH,\KK)$ are denoted by $\dom T,$  $\ran T,$ $\ker T$ and  $\mul T,$ respectively. When $\mul T=\{0\},$ $T$ is an operator.

\begin{lemma}[{\cite[Proposition 1.21]{Labrousse}}] \label{lemalr} Let $S, T \in \lr(\HH,\KK).$ Then $S=T$ if and only if $S\subseteq T,$ $\dom T\subseteq\dom S$ and $\mul T \subseteq \mul S.$
\end{lemma}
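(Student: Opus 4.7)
The forward implication is immediate: equality $S=T$ trivially gives $S\subseteq T$, while $\dom$ and $\mul$ are invariants of the relation, so the corresponding inclusions (in fact, equalities) hold automatically. The content of the lemma therefore lies entirely in the converse.

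For the nontrivial direction, my plan is to establish $T\subseteq S$ pointwise, exploiting only that linear relations are subspaces of $\HH\times\KK$. Take an arbitrary $(x,y)\in T$. The assumption $\dom T\subseteq \dom S$ supplies some $y'\in\KK$ with $(x,y')\in S$; the containment $S\subseteq T$ then places $(x,y')$ in $T$ as well. Subtracting inside the subspace $T$ gives $(0,y-y')=(x,y)-(x,y')\in T$, i.e., $y-y'\in\mul T$. The third hypothesis $\mul T\subseteq\mul S$ now yields $(0,y-y')\in S$, and therefore $(x,y)=(x,y')+(0,y-y')\in S$. Combined with the standing inclusion $S\subseteq T$, this produces the desired equality.

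The only conceptual point worth highlighting is that for a linear relation $T$ and any $x\in\dom T$, the fiber $\{y\in\KK:(x,y)\in T\}$ is an affine coset of $\mul T$; once this is noticed, the proof becomes a purely algebraic manipulation inside the ambient product space, and no technical obstacle should arise.
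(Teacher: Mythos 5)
Your argument is correct and is the standard one: the fiber over $x\in\dom T$ is a coset of $\mul T$, so the three inclusions force $T\subseteq S$. The paper itself gives no proof, citing Labrousse's Proposition 1.21 instead, and your reasoning is exactly the expected argument behind that citation.
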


Given  $T, S \in \lr(\HH,\KK),$
$T \cap \St$  and $T \ \hat{+}  \ S$ are the usual intersection and sum of $T$ and $S$ as subspaces, respectively.
In particular, $\mul(T \ \cap  \ S)=\mul T\cap \mul S$ and $\ker(T \ \cap \ S)=\ker T \cap \ker S,$  $\dom(T \ \hat{+}  \ S)=\dom T+\dom S$ and $\ran(T \ \hat{+}  \ S)=\ran T+\ran S.$ 

\medskip
The sum of two linear relations $T, S \in \lr(\HH,\KK)$ is the linear relation defined by
$$T+S:=\{(x,y+z): (x,y ) \in T \mbox{ and } (x,z) \in S\}.$$

If $T \in \lr(\HH,\mc E)$ and $S\in \lr(\mc E,\mc K),$ the product $ST$ is the linear relation from $\HH$ to $\KK$ defined by
$$ST:=\{(x,y): (x,z) \in T \mbox{ and } (z,y) \in S \mbox{ for some } z \in \mc{E}\}.$$

Given a subspace $\M$ of $\HH$, $I_\M:=\{(u,u): u\in\M\}.$ In particular, the identity is $I:=I_\HH.$

\begin{lemma} \label{resta} \label{resta2} Let $T \in \lr(\HH).$ Then $(u,v) \in I-T$ if and only if $(u,u-v) \in T.$ As a consequence, $\ker(I-T) \subseteq \ran T \cap \dom T,$ $\ker(I-T) \subseteq \ker(I-T^2)$ and $\ran(I-T^2) \subseteq \ran(I-T).$
\end{lemma}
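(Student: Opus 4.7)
The plan is to unwind the definition of $I-T$ as a sum of linear relations and then read off the three inclusions as direct corollaries of the equivalence.

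First I would establish the equivalence $(u,v)\in I-T \Leftrightarrow (u,u-v)\in T$. Writing $I-T=I+(-T)$, by the definition of the sum of linear relations, $(u,v)\in I-T$ means there exist $y,z$ with $(u,y)\in I$, $(u,z)\in -T$ and $v=y+z$. Since $(u,y)\in I$ forces $y=u$, and $(u,z)\in -T$ is $(u,-z)\in T$, this is equivalent to saying $(u,u-v)\in T$ (take $w:=-z=u-v$). This is a one-line unpacking with no obstacle.

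For the three consequences I would apply the equivalence with appropriate choices. Taking $v=0$ shows that $u\in\ker(I-T)$ iff $(u,u)\in T$; this immediately gives $u\in\dom T\cap \ran T$, yielding the first inclusion. For the second, if $(u,u)\in T$ then composing $(u,u)$ with itself in the product $T\cdot T$ produces $(u,u)\in T^2$, so by the equivalence $u\in\ker(I-T^2)$.

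The last inclusion is the only step that requires a small idea. Given $v\in\ran(I-T^2)$, pick $u$ with $(u,v)\in I-T^2$; the equivalence gives $(u,u-v)\in T^2$, so there exists $z\in\HH$ with $(u,z)\in T$ and $(z,u-v)\in T$. Applying the equivalence to each of these two memberships yields $u-z\in\ran(I-T)$ and $z-(u-v)=z-u+v\in\ran(I-T)$. Since $\ran(I-T)$ is a linear subspace, adding these two elements gives $v=(u-z)+(z-u+v)\in\ran(I-T)$, as desired. The main (mild) obstacle is recognizing that one must split the $T^2$-pair through an intermediate $z$ and then exploit linearity of $\ran(I-T)$ to reassemble $v$; once this telescoping is spotted the argument is immediate.
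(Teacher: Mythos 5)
Your proof is correct. The paper states this lemma without proof, and your argument supplies exactly the elementary verification one would expect: unpack the definition of the sum $I+(-T)$ to get the equivalence, then derive the three inclusions, with the only nontrivial point being the telescoping decomposition $v=(u-z)+(z-u+v)$ through the intermediate element $z$ of the $T^2$-composition, which you handle correctly using the linearity of $\ran(I-T)$.
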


The inverse of $T\in\lr(\HH,\KK)$ is $T^{-1}=\{(y,x): (x,y)\in T\}.$  The following identities can be easily checked
\begin{equation} \label{TT}
T^{-1}T=I_{\dom T} \ \hat{+} (\{0\} \times  \ker T) \ \mbox{ and } \ TT^{-1}=I_{\ran T} \ \hat{+} (\{0\} \times  \mul T),
\end{equation}
 \cite[Equation 2.4]{Hassi}.

The closure $\ol{T}$ of a linear relation $T$ from $\HH$ to $\KK$ is the closure of the  subspace $T$ in $\HH \times \KK,$ when the product is provided with the product topology. 
The relation $T$ is  \emph{closed} when it is closed as a subspace of $\HH \times \KK.$ 

The \emph{adjoint} of $T$ is the linear relation from $\KK$ to $\HH$ defined by $$T^*:=JT^{\perp}=(JT)^{\perp},$$ where $J(x,y)=i(-y,x).$ The adjoint 
is automatically a closed linear relation and $\ol{T}=T^{**}:=(T^*)^*.$  It is immediate that $(\ol{T})^*=T^*.$  Since
$$T^*=\{(x,y) \in \KK \times \HH: \PI{g}{x}=\PI{f}{y} \mbox{ for all } (f,g) \in T \},$$ we get that $\mul T^* =(\dom T)^{\perp}$ and $\ker T^* =(\ran T)^{\perp}.$ Therefore, if $T$ is closed both $\ker T$ and $\mul T$ are closed subspaces.

\begin{theorem}[{\cite[Theorem 3.3]{Cross}}] \label{closedrange} Let $T \in \lr(\HH,\KK)$ be closed. Then $\ran T $ is closed if and only if $\ran T^*$ is closed.
\end{theorem}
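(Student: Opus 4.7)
The plan is to reduce the claim to the classical Banach closed range theorem for densely defined closed Hilbert-space operators. Since $T$ is closed, $\mul T$ and $\mul T^*=(\dom T)^{\perp}$ are closed, so I set $\KK_0:=(\mul T)^{\perp}$ and $\HH_0:=\overline{\dom T}=(\mul T^*)^{\perp}$. The \emph{operator part} of $T$ is $T_{op}:=T\cap(\HH\times \KK_0)$, which I would show is a closed densely defined operator from $\HH_0$ into $\KK_0$. Indeed: any two elements of $Tx$ differ by an element of $\mul T$ and hence have the same $\KK_0$-component, giving single-valuedness; for each $(x,y)\in T$ the pair $(x,y)-(0,P_{\mul T}y)$ lies in $T_{op}$, so $\dom T_{op}=\dom T$ is dense in $\HH_0$; and closedness follows because $T_{op}$ is the intersection of the closed relation $T$ with the closed subspace $\HH_0\times \KK_0$.

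Next I would establish two orthogonal decompositions
\begin{equation*}
\ran T=\ran T_{op}\dotplus \mul T,\qquad \ran T^*=\ran (T_{op})^*\dotplus \mul T^*.
\end{equation*}
The first is immediate from the splitting $y=P_{\KK_0}y+P_{\mul T}y$ for $(x,y)\in T$. The second requires two ingredients: first, applying $\mul R^*=(\dom R)^{\perp}$ to $R=T^*$ together with $T=T^{**}$ gives $\mul T=(\dom T^*)^{\perp}$, whence the automatic inclusion $\dom T^*\subseteq \KK_0$; second, a direct inner-product computation using selfadjointness of $P_{\KK_0}$ yields
\begin{equation*}
(T_{op})^*=T^*\cap(\KK_0\times \HH_0).
\end{equation*}
The inclusion $\dom T^*\subseteq \KK_0$ then shows $\ran (T_{op})^*=\ran T^*\cap \HH_0$, and splitting along $\HH=\HH_0\oplus \mul T^*$ (using $\mul T^*\subseteq \ran T^*$) delivers the second decomposition.

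Since $\mul T$ and $\mul T^*$ are closed and orthogonal to $\ran T_{op}$ and $\ran (T_{op})^*$ respectively, the two sums are topologically direct, so $\ran T$ is closed iff $\ran T_{op}$ is closed, and similarly for the starred counterparts. Finally, $T_{op}$ is a densely defined closed operator between the Hilbert spaces $\HH_0$ and $\KK_0$, so the classical Banach closed range theorem gives $\ran T_{op}$ closed iff $\ran (T_{op})^*$ closed. Chaining the three equivalences yields the conclusion.

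The main obstacle is the identification $(T_{op})^*=T^*\cap(\KK_0\times \HH_0)$: one must translate the defining inner-product identity of the adjoint carefully, exploiting that the $\mul T$-component of any lift of a value of $T_{op}$ is annihilated by the selfadjoint projection $P_{\KK_0}$, and symmetrically that $\dom T^*$ already lies inside $\KK_0$. Once this identification is in place, the rest is orthogonal-sum bookkeeping and an invocation of the classical operator result.
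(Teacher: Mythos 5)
The paper does not prove this statement at all: it is imported verbatim from Cross--Wilcox as \cite[Theorem 3.3]{Cross}, so there is no internal proof to compare against. Your argument is a correct, self-contained proof, and it follows what is essentially the standard route (also the one underlying the cited source): pass to the operator part $T_{op}=T\cap(\HH\times(\mul T)^{\perp})$, which for a closed $T$ coincides with $\{(x,P_{(\mul T)^{\perp}}y):(x,y)\in T\}$ because $(0,P_{\mul T}y)\in T$, and reduce to the classical closed range theorem for densely defined closed operators between the Hilbert spaces $\HH_0=\overline{\dom T}$ and $\KK_0=(\mul T)^{\perp}$. The key steps all check out: $\dom T^*\subseteq\KK_0$ follows from $\mul T=\mul T^{**}=(\dom T^*)^{\perp}$; the identification $(T_{op})^*=T^*\cap(\KK_0\times\HH_0)$ holds because for $u\in\KK_0$ one has $\PI{y}{u}=\PI{P_{\KK_0}y}{u}$, so the adjoint identity tested on $T_{op}$ is equivalent to testing it on all of $T$; and the two orthogonal splittings $\ran T=\ran T_{op}\dotplus\mul T$ and $\ran T^*=\ran(T_{op})^*\dotplus\mul T^*$ transfer closedness in both directions since an orthogonal sum of a subspace with a closed subspace is closed iff the first summand is (e.g.\ $\ran T_{op}=\ran T\cap\KK_0$). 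What this buys over a bare citation is a proof that isolates exactly where closedness of $T$ is used (to make $\mul T$ closed and to guarantee $T=T^{**}$, hence $\dom T^*\subseteq\KK_0$) and defers all analysis to the classical Banach closed range theorem; the only cosmetic slip is that you first define $T_{op}$ as $T\cap(\HH\times\KK_0)$ and later call it $T\cap(\HH_0\times\KK_0)$, but these agree since $\dom T\subseteq\HH_0$.
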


If $T \in \lr(\HH,\mc E)$ and $S\in \lr(\mc E,\mc K)$  then
\begin{equation} \label{product}
	T^*S^* \subseteq (ST)^*.	
\end{equation}
 If $T, S \in \lr(\HH,\KK)$ then 
\begin{equation} \label{sumadj}
	(T \ \hat{+} \ S )^* = T^* \cap S^*, 
\end{equation} and 
\begin{equation} \label{sum}
	T^*+S^* \subseteq (T+S)^*.	
\end{equation}

\begin{lemma}[{\cite[Lemma 2.10]{Hassi2}}] \label{sumclosed} Let $T, S \in \lr(\HH, \KK).$ Then $T \ \hat{+} \ S$ is closed if and only if $T^* \ \hat{+} \ S^*$ is closed.
\end{lemma}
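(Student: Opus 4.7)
The plan is to transport closedness from $T^* \ \hat{+} \ S^*$ to $T^\perp + S^\perp$ via the unitary $J$ that appears in the definition of the adjoint, and then to use Theorem~\ref{prop angulos} to convert back to a statement about a primal sum.

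Recall that $J(x,y)=i(-y,x)$ is a linear unitary from $\HH\times\KK$ to $\KK\times\HH$, and that $T^*=JT^\perp$ and $S^*=JS^\perp$ by definition. Linearity of $J$ gives
\begin{equation*}
T^* \ \hat{+} \ S^* \;=\; JT^\perp + JS^\perp \;=\; J\bigl(T^\perp + S^\perp\bigr),
\end{equation*}
and since $J$ is unitary, it both preserves and reflects closedness of subspaces. Hence $T^*\ \hat{+}\ S^*$ is closed if and only if $T^\perp + S^\perp$ is closed.

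Next, apply Theorem~\ref{prop angulos} to the closed subspaces $\overline{T}$ and $\overline{S}$ of $\HH\times\KK$, using $(\overline{T})^\perp=T^\perp$ and $(\overline{S})^\perp=S^\perp$: the sum $\overline{T}+\overline{S}$ is closed if and only if $T^\perp + S^\perp$ is closed. Chaining the two equivalences yields
\[
\overline{T}\ \hat{+}\ \overline{S}\text{ is closed}\iff T^*\ \hat{+}\ S^*\text{ is closed},
\]
which is the desired equivalence in the natural setting where $T$ and $S$ are themselves closed (and hence $T=\overline{T}$, $S=\overline{S}$).

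I expect the principal obstacle to be the passage between $T\ \hat{+}\ S$ and $\overline{T}\ \hat{+}\ \overline{S}$. The inclusion chain $T\ \hat{+}\ S\subseteq \overline{T}\ \hat{+}\ \overline{S}\subseteq \overline{T\ \hat{+}\ S}$ shows that closedness of $T\ \hat{+}\ S$ forces equality throughout, so one direction of the lemma goes through without hypothesis. The reverse direction, in contrast, is where the argument is delicate: closedness of $\overline{T}\ \hat{+}\ \overline{S}$ (equivalently, of $T^*\ \hat{+}\ S^*$) does not a priori yield closedness of $T\ \hat{+}\ S$, so the statement should be read under the standing closedness convention typical of the adjoint framework. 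Once $T$ and $S$ are closed, the unitary transport combined with Theorem~\ref{prop angulos} delivers the equivalence as above.
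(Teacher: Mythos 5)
The paper does not prove this lemma at all; it is quoted from Hassi, de Snoo and Szafraniec, so there is no in-text argument to compare yours against. Your proof is correct and is the standard one: linearity and unitarity of $J$ give $T^*\ \hat{+}\ S^*=J(T^\perp+S^\perp)$, and Theorem \ref{prop angulos}, applied to the closed subspaces $\overline{T},\overline{S}$ of $\HH\times\KK$, converts closedness of $T^\perp+S^\perp=(\overline{T})^\perp+(\overline{S})^\perp$ into closedness of $\overline{T}\ \hat{+}\ \overline{S}$. Your caveat about the reverse direction is also well founded: as literally stated the lemma is false for non-closed relations (take $S=\{(0,0)\}$, so that $S^*=\KK\times\HH$ and hence $T^*\ \hat{+}\ S^*$ is closed, while $T\ \hat{+}\ S=T$ need not be); the cited source states the result for closed $T$ and $S$, and that is the only way the present paper uses it (in Lemma \ref{Tclosed} both summands are closed relations). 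With that reading your argument is complete, and your observation that the forward implication holds without any closedness hypothesis, via $T\ \hat{+}\ S\subseteq\overline{T}\ \hat{+}\ \overline{S}\subseteq\overline{T\ \hat{+}\ S}$, is a correct bonus.
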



\begin{theorem}[{\cite[Theorem 4.2]{Sandovici}}]  \label{sando1} Let $A, B \in \lr(\HH, \KK)$ such that $A \subseteq B^*.$ If
\begin{equation} \label{iden}
\ker A+ \ran B=\HH  \ \mbox{ and } \ \ker B+\ran A=\KK,
\end{equation}
then $A=B^*$ and $B=A^*$ and both $A$ and $B$ are closed with closed ranges.
\end{theorem}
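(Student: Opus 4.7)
The plan is to first promote the inclusion $A \subseteq B^*$ together with the two sum identities into orthogonal direct-sum decompositions of $\HH$ and $\KK$, then to deduce equalities of kernels and ranges, and finally to use linearity to upgrade the containment $A \subseteq B^*$ to the equality $A = B^*$. The symmetric argument for $B = A^*$ will follow automatically, and closedness falls out for free.

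First I would translate $A \subseteq B^*$ into orthogonality. Since $\ker B^* = (\ran B)^\perp$, the inclusion yields $\ker A \subseteq (\ran B)^\perp$, i.e.\ $\ker A \perp \ran B$. For the other side, a direct check on the defining relation of the adjoint shows $\ran B^* \subseteq (\ker B)^\perp$; combined with $\ran A \subseteq \ran B^*$ this gives $\ran A \perp \ker B$. With the hypotheses $\ker A + \ran B = \HH$ and $\ker B + \ran A = \KK$ in hand, both sums are orthogonal, and a routine argument then shows that each summand is closed and equals the orthogonal complement of the other. In particular,
\[
\ker A = (\ran B)^\perp = \ker B^*, \qquad \ran A = (\ker B)^\perp = \ran B^*,
\]
and all four subspaces are closed.

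Next I would upgrade to the equality $A = B^*$. Given $(x,y) \in B^*$, since $y \in \ran B^* = \ran A$ one can pick $x_0$ with $(x_0, y) \in A \subseteq B^*$; then by linearity $(x - x_0, 0) \in B^*$, so $x - x_0 \in \ker B^* = \ker A$, forcing $(x - x_0, 0) \in A$ and hence $(x, y) \in A$. This gives $A = B^*$, and in particular $A$ is closed. For the symmetric half, the inclusion $A \subseteq B^*$ yields $B \subseteq \overline{B} = B^{**} \subseteq A^*$; since the two sum identities are symmetric in the roles of $(A, B)$, running the same argument with $(B, A)$ in place of $(A, B)$ delivers $B = A^*$. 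Closedness of $\ran A$ and $\ran B$ has already been established. I expect the main obstacle to be the first structural step: one must carefully verify the dual orthogonality $\ran B^* \subseteq (\ker B)^\perp$ to complement the identity $\ker B^* = (\ran B)^\perp$ already recorded in the preliminaries, and then confirm that algebraic sum plus pairwise orthogonality of the two summands forces both summands to be closed and to be mutual orthogonal complements. Once that picture is in place, the remainder is essentially bookkeeping.
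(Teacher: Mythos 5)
Your proof is correct. Note that the paper does not prove this statement at all: it is quoted verbatim from Sandovici's article (Theorem 4.2 there) and used as a black box, so there is no in-paper argument to compare against. Your argument is a clean, self-contained derivation: the orthogonality $\ker A\perp\ran B$ and $\ran A\perp\ker B$ extracted from $A\subseteq B^*$, combined with the two sum hypotheses, does force each summand to be the orthogonal complement of the other (hence closed), and the linearity trick then upgrades $A\subseteq B^*$ to equality. The only step you leave implicit is the identification $(\ker B)^\perp=\ran B^*$, which does not hold for general relations; here it follows from the sandwich $\ran A\subseteq\ran B^*\subseteq(\ker B)^\perp=\ran A$, so you may wish to make that one line explicit. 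The symmetric half via $B\subseteq B^{**}\subseteq A^*$ is also fine, as the hypotheses \eqref{iden} are symmetric in the pair $(A,B)$.
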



\section{Sub- and super-idempotents}
A linear relation $E \subseteq \HH \times \HH$ is called an \emph{idempotent} 
if $E^2=E.$ If, in addition $\ran E \subseteq \dom E$, we say that $E$ is a \emph{semi-projection}. If $E$ is an idempotent operator then $\ran E \subseteq \dom E$ and we say that $E$ is a \emph{projection}.
Denote by $\Id(\HH)$ and $\Sp(\HH)$ the set of idempotents and the set of semi-projections, respectively.

Semi-projections  are studied in detail in \cite{Cross} and \cite{Labrousse} where,  among other results, it is proved that a semi-projection is uniquely determined by its range and kernel. More precisely, 
if $\M$ and $\N$ are two subspaces of $\HH$  then
\begin{equation}\label{semiproy-rep}
P_{\M, \N}:=I_\M \ \hat{+} \ (\N \times \{0\})
\end{equation}
is the unique semi-projection with
 $\ran P_{\M, \N}=\M$ and $\ker P_{\M, \N}=\N.$ Furthermore, $\dom P_{\M, \N} =\M + \N$ and  $\mul P_{\M, \N}=\M \cap \N.$ 
 
 Semi-projections appear, for example, when solving least-squares problems for linear relations (see \cite[Proposition 2.2]{Na2}). More precisely, if $T \in \lr(\HH, \KK)$ then, by \eqref{TT}, $T^{-1}T=P_{\dom T, \ker T}$ and  $TT^{-1}=P_{\ran T, \mul T}.$

\begin{proposition}[{\cite[Proposition 1.1]{Cross}}] \label{L1} Let $E \in \lr(\HH).$ Then $E \in \Sp(\HH)$ if and only if $E=P_{\ran E, \ker E}.$
\end{proposition}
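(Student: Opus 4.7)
The implication $(\Leftarrow)$ is essentially by construction: the paragraph preceding the proposition records that $P_{\ran E,\ker E}=I_{\ran E}\ \hat{+}\ (\ker E\times\{0\})$ is a semi-projection, so the equality $E=P_{\ran E,\ker E}$ places $E$ in $\Sp(\HH)$.

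For $(\Rightarrow)$, fix $E\in\Sp(\HH)$ and set $\M:=\ran E$, $\N:=\ker E$. The plan is to verify the two inclusions $P_{\M,\N}\subseteq E$ and $E\subseteq P_{\M,\N}$ by hand. For the first, since $\N\times\{0\}\subseteq E$ is immediate from the definition of kernel, the real content is the claim $I_{\M}\subseteq E$, i.e.\ $(u,u)\in E$ for every $u\in\M$. Given such a $u$, pick $w\in\dom E$ with $(w,u)\in E$; because $\ran E\subseteq\dom E$, also $u\in\dom E$, so there exists $y$ with $(u,y)\in E$. Composing, $(w,y)\in E^{2}=E$, and subtracting $(w,u)\in E$ from $(w,y)\in E$ (as elements of the subspace $E$) yields $(0,u-y)\in E$, that is $u-y\in\mul E$. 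Adding $(0,u-y)$ to $(u,y)$ produces $(u,u)\in E$, as required.

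For the reverse inclusion, take $(x,y)\in E$. Then $y\in\M$, and by the step just completed $(y,y)\in E$. Subtracting inside the subspace $E$ gives $(x-y,0)\in E$, so $x-y\in\N$. The decomposition
\[
(x,y)=(y,y)+(x-y,0)\in I_{\M}\ \hat{+}\ (\N\times\{0\})=P_{\M,\N}
\]
finishes the argument. (Alternatively, one could conclude via Lemma~\ref{lemalr} after establishing $P_{\M,\N}\subseteq E$ together with $\dom E\subseteq\M+\N$ and $\mul E\subseteq\M\cap\N$, both derived the same way from $(y,y)\in E$.)

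The only delicate point is the key claim $I_{\ran E}\subseteq E$: this is exactly where both hypotheses $E^{2}=E$ and $\ran E\subseteq\dom E$ are used jointly, and it is what forces a semi-projection to act as the identity on its range rather than merely mapping range elements somewhere in $\ran E$.
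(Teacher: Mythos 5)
Your proof is correct. Note that the paper itself gives no argument for this proposition --- it is quoted from Cross and Wilcox \cite[Proposition 1.1]{Cross} and relies on the preceding paragraph's assertion that $P_{\M,\N}=I_\M\ \hat{+}\ (\N\times\{0\})$ is the unique semi-projection with range $\M$ and kernel $\N$ --- so there is no in-paper proof to compare against. Your direct verification supplies exactly the missing content: the backward implication does reduce to the recorded fact that $P_{\M,\N}\in\Sp(\HH)$, and in the forward direction your key step $I_{\ran E}\subseteq E$ (obtained by combining $(w,u)\in E$, $(u,y)\in E$, $(w,y)\in E^2=E$, and subtracting within the subspace $E$ to land $(u,u)\in E$) is sound and correctly isolates where both hypotheses $E^2=E$ and $\ran E\subseteq\dom E$ enter; the decomposition $(x,y)=(y,y)+(x-y,0)$ then gives the reverse inclusion. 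The argument is also consistent in spirit with the later, more general Lemma~\ref{always} and Proposition~\ref{sub} of the paper, which recover this statement as the special case $\ran E\subseteq\dom E$.
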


One of our goals is to get a representation similar to (\ref{semiproy-rep}) for idempotent relations. The range and kernel  are not sufficient to fully describe an idempotent unless it is a semi-projection. We will see that a triplet of subspaces is needed to characterize an idempotent relation.  

\begin{example} If $\M, \St, \St'$ are subspaces of $\HH$ with $\M\dot{+}\St=\M\dot{+}\St'$ and $\St \not =\St',$ it can be seen that the relations $E=I_\M\hat{+} (\{0\}\times \St)$ and $E'=I_\M\hat{+} (\{0\}\times \St')$ are idempotent with $\ran E=\ran E'=\M\dotplus\St$ and $\ker E=\ker E'=\{0\}$ although $E\neq E'$ because $\mul E=\St$ and $\mul E'=\St'.$
\end{example}

\medskip
Given a linear relation $E,$ there are two semi-projections naturally associated with $E,$ namely $P_{\ker (I-E), \ker E}$ and $P_{\ran E, \ran(I-E)},$ as the following lemma shows.

\begin{lemma} \label{always} Let $E \in \lr(\HH).$ Then 
	$$P_{\ker (I-E), \ker E} \ \hat{+} \ (\{0\} \times \mul E) \subseteq E \subseteq P_{\ran E, \ran(I-E)} \cap\ (\dom E \times \HH).$$
\end{lemma}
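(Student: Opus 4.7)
The plan is to unfold both semi-projections via the representation \eqref{semiproy-rep} and reduce each of the two inclusions to checking a handful of generating pairs, using Lemma \ref{resta} as the key bridge between the range and kernel of $I-E$ on one side and membership in $E$ on the other.

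For the left inclusion, I would first rewrite
\[
P_{\ker(I-E),\ker E}\ \hat{+}\ (\{0\}\times\mul E) = I_{\ker(I-E)}\ \hat{+}\ (\ker E \times \{0\})\ \hat{+}\ (\{0\}\times\mul E),
\]
so the subspace on the left is generated by three families of pairs: $(x,x)$ with $x\in\ker(I-E)$, $(y,0)$ with $y\in\ker E$, and $(0,m)$ with $m\in\mul E$. Since $E$ is itself a linear subspace of $\HH\times\HH$, it suffices to verify that each generator lies in $E$. The last two are immediate from the definitions of $\ker E$ and $\mul E$. For the first, Lemma \ref{resta} applied to $(x,0)\in I-E$ (which holds precisely because $x\in\ker(I-E)$) gives $(x,x-0)=(x,x)\in E$, as required.

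For the right inclusion, I would take an arbitrary $(u,v)\in E$ and use the decomposition $(u,v)=(v,v)+(u-v,0)$. The first summand belongs to $I_{\ran E}$ since $v\in\ran E$. For the second, Lemma \ref{resta} read in the converse direction turns the membership $(u,v)\in E$ into $(u,u-v)\in I-E$, so that $u-v\in\ran(I-E)$ and hence $(u-v,0)\in\ran(I-E)\times\{0\}$. Combining, $(u,v)\in I_{\ran E}\ \hat{+}\ (\ran(I-E)\times\{0\})=P_{\ran E,\ran(I-E)}$. Since $u\in\dom E$ is automatic from $(u,v)\in E$, the inclusion into $P_{\ran E,\ran(I-E)}\cap(\dom E\times\HH)$ follows.

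There is no real obstacle to the argument: the statement is a set inclusion, so no uniqueness, density, or closedness considerations intervene. The whole content is the repeated use of Lemma \ref{resta} together with the explicit generators of a semi-projection, and the only bookkeeping is to keep straight which coordinate plays the role of the domain and which of the range when swapping between $E$ and $I-E$.
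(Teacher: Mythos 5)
Your proof is correct and follows essentially the same route as the paper's: both verify the left inclusion by checking that the generators of the semi-projection lie in $E$ (with $I_{\ker(I-E)}\subseteq E$ being the only nontrivial case, handled via Lemma \ref{resta}), and both establish the right inclusion via the decomposition $(u,v)=(v,v)+(u-v,0)$ together with $(u,u-v)\in I-E$. Your version is just a slightly more explicit write-up of the same argument.
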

\begin{proof} To see the first inclusion we only need to check that $I_{\ker(I-E)} \subseteq E$ but if $u \in \ker(I-E)$ then $(u,0) \in I-E$ whence $(u,u) \in E.$ To prove the second inclusion, let $(u,v) \in E$ then $(u,u-v) \in I-E$ so that $(u,v)=(v,v)+(u-v,0) \in P_{\ran E, \ran(I-E)}.$
\end{proof}

From now on $\M, \N$ and $\St$ are subspaces of $\HH.$

In view of the above lemma, we begin by studying the relations
\begin{equation} \label{RT}
R:=\PMN \cap (\St\times \HH) \mbox{ and } T:=\PMN \ \hat{+}  \ (\{0\} \times \St).
\end{equation}

\begin{lemma} \label{propmul}  \label{cormul2} Let $T,R\in \lr(\HH)$ be defined as in (\ref{RT}). 
Then
\begin{enumerate}
\item [1.] $\dom R= (\M + \N) \cap \St,$ $\ran R=\M \cap (\N+ \St),$  $\ker R=\N\cap \St$ and $\mul R=\M \cap \N.$ 
\item [2.] $\dom T=\M+\N,$ $\ran T=\M+\St,$  $\ker T=\N+ \M \cap \St$ and $\mul T= \St + \M \cap \N.$
\end{enumerate}
\end{lemma}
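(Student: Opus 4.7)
Both items are obtained by unpacking membership in the building blocks $\PMN=I_\M\ \hat{+}\ (\N\times\{0\})$ and $\{0\}\times\St$, combined with the identities recorded in Section~\ref{sec:preliminaries}: $\dom(A\ \hat{+}\ B)=\dom A+\dom B$, $\ran(A\ \hat{+}\ B)=\ran A+\ran B$, the analogous identity $\mul(A\ \hat{+}\ B)=\mul A+\mul B$, as well as $\ker(A\cap B)=\ker A\cap\ker B$ and $\mul(A\cap B)=\mul A\cap\mul B$. None of this relies on any hypothesis on $\M,\N,\St$, so the whole lemma is elementary set-theoretic bookkeeping of the pieces.

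For item 2, the formulas $\dom T=\M+\N$ and $\ran T=\M+\St$ follow instantly from the $\hat{+}$-identities (using $\dom\PMN=\M+\N$, $\ran\PMN=\M$), and $\mul T=\M\cap\N+\St$ is the same for the multivalued part. The delicate piece is $\ker T$: I would take $(u,0)\in T$ and write it as $(u_1,y_1)+(0,y_2)$ with $(u_1,y_1)\in\PMN$, $y_2\in\St$ and $y_1+y_2=0$. From the semi-projection formula, $u_1=m+n$ and $y_1=m$ with $m\in\M$, $n\in\N$; the cancellation $m=-y_2\in\St$ forces $m\in\M\cap\St$, whence $u\in\N+\M\cap\St$. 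The reverse inclusion is immediate: given $m\in\M\cap\St$ and $n\in\N$, use $(m,m)\in I_\M$, $(n,0)\in\N\times\{0\}$ and $(0,-m)\in\{0\}\times\St$.

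For item 1, I note $\ker(\St\times\HH)=\St$ and $\mul(\St\times\HH)=\HH$, so the intersection identities give $\ker R=\N\cap\St$ and $\mul R=\M\cap\N$ at once. The domain and range come from the observation that $(u,v)\in R$ iff $u\in\St$ and $u=m+n$, $v=m$ with $m\in\M$, $n\in\N$. Reading off the first coordinate yields $\dom R=(\M+\N)\cap\St$. For the range, $v=m\in\M$ is attained iff some $n\in\N$ satisfies $m+n\in\St$, i.e.\ iff $m\in\St+\N$; hence $\ran R=\M\cap(\N+\St)$.

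The only real obstacle is keeping the decompositions straight. The blanket identities for $\hat{+}$ and $\cap$ do not produce the crossed summand $\M\cap\St$ appearing in $\ker T$ (nor, dually, the $\N+\St$ appearing in $\ran R$); these have to be extracted by hand from the explicit form of $\PMN$. Once that bookkeeping is done, each of the eight equalities is verified by a short two-way inclusion.
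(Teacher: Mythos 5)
Your proof is correct and is exactly the argument the paper intends: the paper's own proof of this lemma is the single line ``Use the definitions of $R$ and $T$,'' and you have simply carried out that unwinding, including the two places ($\ker T$ and $\ran R$) where a genuine two-way inclusion is needed. One caveat: the ``analogous identity'' $\mul(A\ \hat{+}\ B)=\mul A+\mul B$ that you invoke for $\mul T$ is false for general relations (only $\supseteq$ holds; e.g.\ $A=I$, $B=-I$ gives $\mul(A\ \hat{+}\ B)=\HH$ while $\mul A+\mul B=\{0\}$, which is why the paper lists the $\dom$ and $\ran$ identities for $\hat{+}$ but not this one), though it is valid here because $\dom(\{0\}\times\St)=\{0\}$ forces any decomposition $(0,y)=(m+n,m)+(0,s)$ to have $m+n=0$, whence $m\in\M\cap\N$ and $y=m+s\in\M\cap\N+\St$ as you claim.
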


\begin{proof}  Use the definitions of $R$ and $T.$
\end{proof}

\begin{lemma} \label{propmul2}  Let $R, T \in \lr(\HH)$ be defined as in (\ref{RT}). 
	Then
	\begin{enumerate}
		\item [1.] $I-R=P_{\N,\M} \cap (\St\times \HH)$ and $R^{-1}=P_{\St,\N} \cap (\M \times \HH).$ 
		\item [2.] $I-T=P_{\N,\M} \ \hat{+}  \ (\{0\} \times \St)$ and $T^{-1}=P_{\M,\St} \ \hat{+}  \ (\{0\} \times \N).$
	\end{enumerate}
\end{lemma}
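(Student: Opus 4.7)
The plan is to prove each of the four identities by direct element-chasing, using Lemma \ref{resta} to handle the $I-(\cdot)$ parts and the definition of inverse for the $(\cdot)^{-1}$ parts. The whole argument can be carried out once we observe two auxiliary identities: first, $I-P_{\M,\N}=P_{\N,\M}$, and second, $I-(\St\times \HH)=\St\times\HH$. Both follow immediately from Lemma \ref{resta}: for the first, $(u,v)\in I-P_{\M,\N}$ iff $(u,u-v)\in P_{\M,\N}$, which means $u=m+n$ with $m\in\M$, $n\in\N$ and $u-v=m$, i.e.\ $v=n$ and $u=n+m$; the second is clear since the condition $(u,u-v)\in\St\times\HH$ reduces to $u\in\St$. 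A further routine check, again via Lemma \ref{resta}, shows that $I-(A\cap B)=(I-A)\cap(I-B)$ for any $A,B\in\lr(\HH)$.

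For Part 1, combining these observations gives the first formula at once:
\[I-R=I-\bigl(P_{\M,\N}\cap(\St\times\HH)\bigr)=(I-P_{\M,\N})\cap(I-(\St\times\HH))=P_{\N,\M}\cap(\St\times\HH).\]
For the inverse, I would unravel: $(u,v)\in R^{-1}$ iff $(v,u)\in R$, that is, iff $v\in\St$, $u\in\M$, and $v-u\in\N$. Since $\N$ is a subspace, $v-u\in\N$ is equivalent to $u-v\in\N$, and these three conditions are exactly those describing $P_{\St,\N}\cap(\M\times\HH)$.

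For Part 2, I would compute $I-T$ directly from Lemma \ref{resta}: $(u,v)\in I-T$ iff $(u,u-v)\in P_{\M,\N}\ \hat{+}\ (\{0\}\times\St)$, i.e.\ there exist $m\in\M$, $n\in\N$, $s\in\St$ with $u=m+n$ and $u-v=m+s$, equivalently $v=n-s$. Because $\St$ is a subspace we can replace $-s$ by a fresh element of $\St$, and the resulting pair $(m+n,n+s')$ is precisely a generic element of $P_{\N,\M}\ \hat{+}\ (\{0\}\times\St)$; both implications run in parallel. For $T^{-1}$ the same kind of decomposition yields $(u,v)\in T^{-1}$ iff $(v,u)\in T$, which amounts to $v=m+n$ and $u=m+s$ for some $m\in\M$, $n\in\N$, $s\in\St$; writing $(u,v)=(m,m)+(s,0)+(0,n)$ identifies this as an arbitrary element of $P_{\M,\St}\ \hat{+}\ (\{0\}\times\N)$.

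None of the steps is genuinely hard; the only real risk is clerical, because the roles of $\M$, $\N$ and $\St$ swap around between the four formulas. I would therefore present the proof as two short paragraphs, prefaced by the two auxiliary identities above, so that the bookkeeping for each of the four statements reduces to a single line.
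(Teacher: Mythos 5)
Your proposal is correct and takes essentially the same route as the paper: all four identities are verified by unwinding the definitions of $P_{\M,\N}$, the intersection/sum, and the inverse, with Lemma \ref{resta} handling the $I-(\cdot)$ computations. The only cosmetic difference is that for $I-R$ you factor the element chase through the auxiliary identities $I-P_{\M,\N}=P_{\N,\M}$ and $I-(A\cap B)=(I-A)\cap(I-B)$, whereas the paper carries out the same chase in a single combined step; both are valid and of equal length.
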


\begin{proof}  By Lemma \ref{resta}, $(x,y) \in I-R$ if and only if $(x,x-y) \in R,$ or equivalently $(x,x-y) \in P_{\M,\N} \cap (\St\times \HH).$ Then $(x,y) \in I-R$ if and only if $x=m+n \in \St,$ $m\in \M$, $n \in \N$ and $x-y =m.$ Therefore $y=x-m=n$ so that $(x,y)=(m+n,n),$ $m+n \in \St.$ Hence $(x,y) \in P_{\N,\M} \cap (\St\times \HH).$ The other inclusion is similar. 

To prove the formula for $R^{-1},$ let $(x,y) \in R.$ Then $(x,y)=(m+n,m)$ where $m \in \M, n \in \N$ and $m+n=:s \in \St.$ Therefore $(x,y)=(s,s-n)$ with $s-n=m \in \M$ so that $(y,x)=(s-n,s) \in P_{\St,\N} \cap (\M \times \HH).$ Hence $R^{-1} \subseteq P_{\St,\N} \cap (\M \times \HH).$ The reverse inclusion is similar. 

The proof of item $2$ follows in a similar fashion.
\end{proof}


\begin{lemma} \label{always2} Let  $R,T \in \lr(\HH)$ be defined as in (\ref{RT}). 
Then
\begin{enumerate}
\item [1.] $R^2 \subseteq R.$ 
\item [2.] $T \subseteq T^2.$
\end{enumerate}
\end{lemma}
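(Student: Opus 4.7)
The plan is to work directly from the defining decompositions: a pair is in $P_{\M,\N}$ exactly when it can be written as $(m+n, m)$ with $m\in\M$ and $n\in\N$, and the extra factor $(\St\times\HH)$ or $\{0\}\times\St$ just adds/restricts an $\St$-component.

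For part 1, I would take $(x,y)\in R^2$. By definition of the product there is some intermediate $z\in\HH$ with $(x,z),(z,y)\in R$. Unpacking $R = P_{\M,\N}\cap(\St\times\HH)$, I can write $x=m+n$ with $m\in\M$, $n\in\N$ and $z=m$, and also $z=m'+n'$ with $m'\in\M$, $n'\in\N$ and $y=m'$; moreover $x,z\in\St$. The combination $m=z=m'+n'$ yields $n'=m-m'\in\M\cap\N$. Then $y=m'=m-n'\in\M$ and $x-y=(m+n)-(m-n')=n+n'\in\N$, so $(x,y)\in P_{\M,\N}$. Together with $x\in\St$, this gives $(x,y)\in R$.

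For part 2, I would take $(x,y)\in T=P_{\M,\N}\hat{+}(\{0\}\times\St)$ and decompose $(x,y)=(x,y_0)+(0,s)$ with $(x,y_0)\in P_{\M,\N}$ and $s\in\St$. Writing $x=m+n$ and $y_0=m$, I pick $z:=m$ as the intermediate point. Clearly $(x,z)=(m+n,m)\in P_{\M,\N}\subseteq T$. For $(z,y)=(m,m+s)$, the pair $(m,m)$ is in $P_{\M,\N}$ (take $m$ in the $\M$-slot and $0$ in the $\N$-slot), and $(0,s)\in\{0\}\times\St$, so $(m,m+s)\in T$. Hence $(x,y)\in T^2$.

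Both arguments are essentially bookkeeping, and the only subtle point is in part 1, where one has to notice that the equality $m=m'+n'$ forces $m-m'\in\M\cap\N$, which is the key to redistributing the $\M$/$\N$-components so that $(x,y)$ itself fits the $P_{\M,\N}$ template. No deeper tools than Lemma \ref{resta} and the explicit description of $P_{\M,\N}$ in \eqref{semiproy-rep} are needed.
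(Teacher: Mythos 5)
Your proof is correct. It takes the same basic route as the paper — a direct element chase through the definition of the product relation — but organizes the bookkeeping differently. For part 1 the paper first observes (via Lemmas \ref{propmul} and \ref{propmul2}) that $\ker(I-R)=\ran R\cap\dom R$, so the intermediate point $z$ satisfies $(z,z)\in R$, and then writes $(x,y)=(x-z,0)+(z,z)+(0,y-z)$ as a sum of three elements of $R$; you instead unpack the $(m+n,m)$ template for both pairs and extract the key fact that $m-m'\in\M\cap\N$, which lets you recombine the components so that $(x,y)$ itself fits the template. For part 2 the paper uses $\ran(I-T)=\ker T+\mul T$ to produce the intermediate point $y+s=x-n$, while you take the intermediate point $m=x-n$ directly from the decomposition $x=m+n$, $y=m+s$ — essentially the same point, reached without invoking the kernel/range identities. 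Your version is slightly more computational but entirely self-contained; the paper's version is slightly more structural and reuses the identities it has already catalogued, which pays off later when those same identities are needed for Propositions \ref{sub} and \ref{supra}. Both arguments are complete and correct.
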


\begin{proof} $1:$ By Lemmas \ref{propmul} and \ref{propmul2}, it easily follows that $\ker(I-R)=\ran R \cap \dom R.$  If $(x,y) \in R^2$ then there exists $z \in \HH$ such that $(x,z), (z,y) \in R.$ So that $z \in \ker (I-R),$ or equivalently $(z,z) \in R.$ Hence $(x-z,0)=(x,z)- (z,z) \in R$ and $(0,y-z)=(z,y)-(z,z) \in R.$ Therefore $(x,y)=(x-z,0)+(z,z)+(0,y-z) \in R.$
	
$2$: By Lemmas \ref{propmul} and \ref{propmul2}, it easily follows that $\ran(I-T)=\ker T +\mul T.$ If $(x,y) \in T$ then $(x,x-y) \in I-T$ so that $x-y \in \ker T + \mul T.$ Hence $x-y=n+s$ for some $n \in \ker T$ and $s  \in \mul T.$ Then $(x,y+s)=(x,y)+(0,s) \in T$ and $(y+s,y)=(x-n,y)=(x,y)-(n,0) \in T.$ Therefore $(x,y) \in T^2.$
\end{proof}

\begin{definition} $E \in \lr(\HH)$ is called \emph{sub-idempotent} if $E^2 \subseteq E$ and \emph{super-idempotent} if $E \subseteq E^2.$ 
\end{definition}

\begin{lemma} If  $E \in \lr(\HH)$ is sub- (super-idempotent) then $E^2$ is  sub- (super-idempotent).
\end{lemma}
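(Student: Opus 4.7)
The plan is to exploit the monotonicity of relational composition, together with the associativity of the product of linear relations. Monotonicity is immediate from the definition: if $A \subseteq B$ are relations in $\lr(\HH)$, then for any $C \in \lr(\HH)$ one has $CA \subseteq CB$ and $AC \subseteq BC$, since each defining triple $(x,z,y)$ witnessing membership in the left-hand side also witnesses membership in the right-hand side. Associativity lets us write $E^n$ unambiguously.

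First I would handle the sub-idempotent case. Assuming $E^2 \subseteq E$, compose on the left with $E$ to obtain
\[
E^3 = E\cdot E^2 \subseteq E\cdot E = E^2,
\]
and composing once more yields $E^4 = E \cdot E^3 \subseteq E \cdot E^2 = E^3 \subseteq E^2$. Thus $(E^2)^2 = E^4 \subseteq E^2$, so $E^2$ is sub-idempotent.

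Next I would treat the super-idempotent case symmetrically. Assuming $E \subseteq E^2$, compose on the right with $E$ to get $E^2 \subseteq E^3$, and again to get $E^3 \subseteq E^4$. Therefore $E^2 \subseteq E^4 = (E^2)^2$, so $E^2$ is super-idempotent.

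There is essentially no obstacle here; the only minor point worth keeping in mind is that products of linear relations are associative, so the iterated powers $E^3, E^4$ are well defined regardless of the grouping (in particular $E^4 = E^2 \cdot E^2 = E \cdot E^3$), which is what lets us identify $(E^2)^2$ with $E^4$ and close the argument in one line in each case.
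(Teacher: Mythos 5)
Your proof is correct and follows essentially the same route as the paper, which simply invokes the monotonicity fact that $A \subseteq B$ implies $A^2 \subseteq B^2$ (applied to $E^2 \subseteq E$ or $E \subseteq E^2$) to get $(E^2)^2 = E^4$ comparable with $E^2$. Your version just spells out that monotonicity step via one-sided composition and associativity.
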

\begin{proof} Use that if $A, B \in \lr(\HH)$ and $A \subseteq B$ then $A^2 \subseteq B^2.$ 
\end{proof}

\begin{lemma} \label{subsupker} Let $E \in \lr(\HH).$ 
\begin{enumerate}
\item[1.] If $E$ is sub-idempotent then $\ker E^2=\ker E,$ $\ker(I-E^2)=\ker(I-E)$ and $\mul E^2=\mul E.$
\item[2.]  If $E$ is super-idempotent then $\ran E^2=\ran E,$ $\ran(I-E^2)=\ran(I-E)$ and $\dom E^2=\dom E.$
\end{enumerate}
\end{lemma}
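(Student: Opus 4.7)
My plan is to treat each of the six equalities as a pair of inclusions, where in each case one direction follows immediately from the definition of the product $E^2$ or from Lemma \ref{resta}, and the reverse direction uses the sub- or super-idempotency hypothesis directly. Nothing deep is involved; the proof is essentially bookkeeping on pairs.

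For item 1 (sub-idempotent), I would first observe that $\ker E \subseteq \ker E^2$ and $\mul E \subseteq \mul E^2$ hold for any linear relation: since $E$ is a linear subspace of $\HH\times\HH$ we have $(0,0)\in E$, so if $(x,0)\in E$ then the intermediate point $z=0$ witnesses $(x,0)\in E^2$, and similarly if $(0,y)\in E$ then $(0,y)\in E^2$. The inclusion $\ker(I-E)\subseteq\ker(I-E^2)$ is already recorded in Lemma \ref{resta}. The three reverse inclusions follow from $E^2\subseteq E$: the first two by inclusion of graphs, and for the third, if $x\in\ker(I-E^2)$ then $(x,x)\in E^2\subseteq E$, whence $x\in\ker(I-E)$.

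For item 2 (super-idempotent), the structure is symmetric. The inclusions $\ran E^2\subseteq\ran E$ and $\dom E^2\subseteq\dom E$ hold universally, since any element of $E^2$ is the composition of two elements of $E$ and therefore has its second coordinate in $\ran E$ and its first in $\dom E$; the inclusion $\ran(I-E^2)\subseteq\ran(I-E)$ is given by Lemma \ref{resta}. For the reverse inclusions I invoke $E\subseteq E^2$: the first two are immediate from inclusion of graphs, and for $\ran(I-E)\subseteq\ran(I-E^2)$, given $y\in\ran(I-E)$ I pick $x$ with $(x,y)\in I-E$, use Lemma \ref{resta} to rewrite this as $(x,x-y)\in E\subseteq E^2$, apply Lemma \ref{resta} again in the other direction to get $(x,y)\in I-E^2$, and conclude $y\in\ran(I-E^2)$. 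The only subtlety worth watching is that the use of $z=0$ as an intermediate element in item 1 requires $(0,0)\in E$, which is automatic because $E$ is a linear subspace; beyond that, there is no real obstacle.
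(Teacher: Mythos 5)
Your proof is correct and takes essentially the same route as the paper: the paper likewise gets the unconditional inclusions from Lemma \ref{resta} (together with the trivial ones for kernels, multivalued parts, ranges and domains) and obtains the reverse inclusions directly from $E^2\subseteq E$ or $E\subseteq E^2$, leaving most cases as "follow similarly," which you have simply written out in full.
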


\begin{proof} By Lemma \ref{resta}, $\ker(I-E)  \subseteq \ker(I-E^2)$ always holds. If $E$ is sub-idempotent then $I-E^2 \subseteq I-E$ and then $\ker (I-E^2) \subseteq \ker(I-E).$ The other assertions follow similarly.
\end{proof}

\begin{proposition} \label{sub}\label{corsub} Let $E \in \lr(\HH).$ Then the following are equivalent:
\begin{enumerate}
\item $E$ is sub-idempotent;
\item $E= P_{\ran E, \ran(I-E)} \cap (\dom E \times \HH);$
\item $\ker (I-E)=\ran E \cap \dom E;$
\item $P_{\ran E \cap \dom E, \ker E}  \subseteq E;$
\end{enumerate}
In this case, 
$\mul E \cap \dom E=\ran E \cap \ker E.$
\end{proposition}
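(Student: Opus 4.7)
The plan is to prove the cyclic implications $(1) \Rightarrow (3) \Rightarrow (2) \Rightarrow (1)$ together with $(3) \Rightarrow (4) \Rightarrow (1)$, and then deduce the final identity from condition $(3)$. Throughout, the main workhorse is Lemma \ref{always}, which gives the two‐sided inclusion
\[
P_{\ker(I-E),\ker E}\ \hat{+}\ (\{0\}\times\mul E)\ \subseteq\ E\ \subseteq\ F:=P_{\ran E,\ran(I-E)}\cap(\dom E\times\HH),
\]
and the explicit descriptions of $\ran F,\dom F,\ker F,\mul F$ supplied by Lemma \ref{cormul2} with $\M=\ran E$, $\N=\ran(I-E)$, $\St=\dom E$.

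For $(1)\Rightarrow(3)$, the containment $\ker(I-E)\subseteq\ran E\cap\dom E$ is Lemma \ref{resta}. Conversely, if $x\in\ran E\cap\dom E$, pick $u,w$ with $(u,x),(x,w)\in E$. Then $(u,w)\in E^2\subseteq E$, so subtracting $(u,x)$ gives $(0,w-x)\in E$ and subtracting $(u,w)$ from $(u,x)$ gives $(0,x-w)\in E$; adding the latter to $(x,w)\in E$ gives $(x,x)\in E$, i.e., $x\in\ker(I-E)$.

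For $(3)\Rightarrow(2)$, using Lemma \ref{always} we already have $E\subseteq F$. By Lemma \ref{lemalr} it suffices to verify $\dom F\subseteq\dom E$ and $\mul F\subseteq\mul E$. The first is immediate from the definition of $F$. For the second, Lemma \ref{cormul2} gives $\mul F=\ran E\cap\ran(I-E)$; take $z$ in this intersection, so $(u,z)\in E$ and $(v,v-z)\in E$ for some $u,v\in\dom E$. Then $v=(v-z)+z\in\ran E$ and $v\in\dom E$, so $(3)$ yields $(v,v)\in E$. Subtracting $(v,v-z)$ gives $(0,z)\in E$, hence $z\in\mul E$. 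This is the step I expect to be the main obstacle, since it is where condition $(3)$ is used in a nonobvious way.

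The implication $(2)\Rightarrow(1)$ is Lemma \ref{always2}.1 applied to $R=F$, and $(3)\Rightarrow(4)$ is immediate from Lemma \ref{always}, replacing $\ker(I-E)$ by $\ran E\cap\dom E$. For $(4)\Rightarrow(1)$: if $(x,y)\in E^2$, take $z$ with $(x,z),(z,y)\in E$; then $z\in\ran E\cap\dom E$, so $(4)$ gives $(z,z)\in E$. Subtracting $(z,y)$ yields $(0,z-y)\in E$, and adding this to $(x,z)$ gives $(x,y)\in E$. Finally, for the additional identity, assume $(1)$ (hence $(3)$). If $z\in\mul E\cap\dom E$, then $z\in\ran E\cap\dom E=\ker(I-E)$, so $(z,z)\in E$; subtracting $(0,z)\in E$ gives $(z,0)\in E$, hence $z\in\ran E\cap\ker E$. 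Conversely, if $z\in\ran E\cap\ker E$, pick $u$ with $(u,z)\in E$; then $(u,0)=E\ni(z,0)\circ(u,z)$ yields, via $(u,z),(z,0)\in E$, $(u,0)\in E^2\subseteq E$, and subtracting from $(u,z)$ gives $(0,z)\in E$, i.e., $z\in\mul E\cap\dom E$.
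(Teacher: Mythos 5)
Your proof is correct and rests on the same toolkit as the paper's (Lemmas \ref{always}, \ref{lemalr}, \ref{cormul2} and \ref{always2}); the only real differences are cosmetic. You prove $(1)\Rightarrow(3)$ by a direct element computation and then get $(2)$ from $(3)$, whereas the paper goes $(1)\Rightarrow(2)\Rightarrow(3)$, establishing $\mul F\subseteq\mul E$ straight from $E^2\subseteq E$; your version routes that containment through the identity $\ker(I-E)=\ran E\cap\dom E$, which works just as well. For the closing identity $\mul E\cap\dom E=\ran E\cap\ker E$ you chase elements in both directions, while the paper gets it in one line from the observation that $I-E$ is also sub-idempotent, so $\ker E=\ran(I-E)\cap\dom E$ and the identity follows by intersecting; the symmetry argument is slicker but yours is equally valid. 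One trivial slip: in $(4)\Rightarrow(1)$ you produce $(0,z-y)\in E$ and then ``add it to $(x,z)$,'' which literally gives $(x,2z-y)$; you of course want to add $(0,y-z)=-(0,z-y)$, which lies in $E$ by linearity, so nothing is lost.
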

\begin{proof}  Set $R:=P_{\ran E, \ran(I-E)} \cap (\dom E \times \HH).$ By Lemma \ref{always}, $E \subseteq R$ and $\dom E =\dom R$.  

$i) \Rightarrow ii)$: Suppose that $E^2 \subseteq E.$ To see that $E=R$ we apply Lemma \ref{lemalr} by showing that $\mul R \subseteq \mul E.$ Let $w \in \mul R =\ran E \cap \ran(I-E).$ Then there exist $u, v \in \HH$ such that $(u,w) \in E$ and $(v,w) \in I-E.$ Then $(v,v-w) \in E,$ so that $(u+v,v) \in E.$ Hence $(u+v,v-w) \in E^2 \subseteq E.$ Therefore $(0,w)=(u+v,v)-(u+v,v-w) \in E.$

$ii) \Rightarrow iii)$: Follows from  Lemma \ref{propmul}. 

$iii) \Rightarrow iv)$: $P_{\ran E \cap \dom E, \ker E}=P_{\ker(I-E),\ker E } \subseteq E,$ by Lemma \ref{always}.
 
$iv) \Rightarrow i)$: Let $(u,v) \in E^2.$ Then there exists $w$ such that $(u,w),$ $(w,v) \in E.$ Then $(w,w) \in E$ because $w \in \ran E \cap \dom E$ and $I_{\ran E \cap \dom E} \subseteq E.$ Hence $(u,v)=(u-w,0)+(w,w)+(0,v-w) \in E.$

In this case, $\mul E=\ran E \cap \ran(I-E).$ By Lemma \ref{propmul2}, $I-E$ is also sub-idempotent; then $\ker E=\ran (I-E)\cap \dom E.$ Therefore $\ran E \cap \ker E=\ran E \cap \ran(I-E) \cap \dom E= \mul E \cap \dom E.$
\end{proof}

\begin{corollary} \label{subcor1} The set of sub-idempotents is $$\{P_{\M,\N} \cap (\St \times \HH)\}.$$
\end{corollary}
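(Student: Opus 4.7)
The plan is simply to unpack the two directions already encoded in the results immediately preceding the corollary. For the inclusion $\supseteq$, i.e.\ that every relation of the form $P_{\M,\N} \cap (\St \times \HH)$ is sub-idempotent, I would cite item 1 of Lemma \ref{always2}, which states exactly that $R := P_{\M,\N} \cap (\St \times \HH)$ satisfies $R^2 \subseteq R$.

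For the inclusion $\subseteq$, i.e.\ that every sub-idempotent $E$ admits such a representation, I would invoke the equivalence $i) \Leftrightarrow ii)$ of Proposition \ref{sub}: if $E^2 \subseteq E$, then
\[
E = P_{\ran E, \ran(I-E)} \cap (\dom E \times \HH),
\]
so choosing $\M := \ran E$, $\N := \ran(I-E)$, $\St := \dom E$ exhibits $E$ as an element of the family on the right-hand side.

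There is no real obstacle: the content has been done in Lemma \ref{always2} and Proposition \ref{sub}, and the corollary just records the conclusion as a set-theoretic identity. The only thing worth being careful about is making explicit that the triplet $(\M,\N,\St)$ producing a given sub-idempotent is not unique (only the intrinsic triplet $(\ran E, \ran(I-E), \dom E)$ is canonical), which is why the statement is phrased as equality of sets of relations rather than as a bijection with triplets of subspaces.
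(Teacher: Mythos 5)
Your proposal is correct and coincides with the paper's own argument: the forward inclusion is exactly Proposition \ref{sub} $i)\Leftrightarrow ii)$ with the canonical choice $\M=\ran E$, $\N=\ran(I-E)$, $\St=\dom E$, and the reverse inclusion is item 1 of Lemma \ref{always2}. The added remark about non-uniqueness of the representing triplet is a sensible observation but not needed for the set equality.
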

\begin{proof} By Proposition \ref{sub}, any sub-idempotent belongs to the set. Conversely, if $R:=\PMN \cap (\St\times \HH),$ by Lemma \ref{always2}, $R$ is sub-idempotent. 
\end{proof}

\begin{remark} Let $E \in \lr(\HH)$ be sub-idempotent. Then $E=P_{\M,\N} \cap  (\St \times \HH)$ if and only if $\ran E=(\N+\St) \cap \M,$ $\ran(I-E)=(\M+\St) \cap \N$ and $\dom E=(\M + \N) \cap \St.$ 
	
In fact, since  $E$ is sub-idempotent, by Proposition \ref{sub}, $$E=P_{\ran E, \ran(I-E)} \cap (\dom E \times \HH).$$ So that, if $\ran E=(\N+\St) \cap \M,$ $\ran(I-E)=(\M+\St) \cap \N$ and $\dom E=(\M + \N) \cap \St,$  then  $E=P_{(\N+\St) \cap \M, (\M+\St) \cap \N} \hat{+} (((\M +\N) \cap \St) \times \HH)=P_{\M,\N} \cap (\St \times \HH).$ The converse follows from Lemma \ref{propmul}.
\end{remark}

\begin{proposition} \label{supra} Let $E \in \lr(\HH).$ Then the following are equivalent:
	\begin{enumerate}
		\item $E$ is super-idempotent;
		\item $E=P_{\ker(I-E), \ker E} \ \hat{+} \ (\{0\}\times \mul E);$
		\item $\ran(I-E)=\ker E + \mul E;$
		\item $E\subseteq P_{\ran E,\ker E+\mul E}.$
	\end{enumerate}	
In this case, $\dom E = \ran E \cap \dom E+\ker E.$
\end{proposition}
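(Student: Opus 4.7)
The plan is to run the cycle $i) \Rightarrow iv) \Rightarrow iii) \Rightarrow ii) \Rightarrow i)$, mirroring the sub-idempotent argument of Proposition \ref{sub} but with the roles of domain/range and kernel/multivalued part swapped. The implication $ii) \Rightarrow i)$ comes for free: the right-hand side of $ii)$ is an instance of the relation $T$ from \eqref{RT}, and Lemma \ref{always2} item 2 asserts $T \subseteq T^{2}$, so any $E$ agreeing with such a $T$ is automatically super-idempotent.

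The main combinatorial step is $i) \Rightarrow iv)$. Given $(u,v) \in E \subseteq E^{2}$, I would pick $z$ with $(u,z), (z,v) \in E$ and extract structural information by pairwise subtraction: first $(0, v-z) = (u,v)-(u,z) \in E$ puts $v - z \in \mul E$; then $(z,z) = (z,v) - (0, v-z) \in E$ identifies $z$ as a fixed point of $E$, i.e.\ $z \in \ker(I-E)$; finally $(u-z, 0) = (u,z)-(z,z) \in E$ forces $u - z \in \ker E$. Writing $u - v = (u-z) + (z-v)$ then exhibits the decomposition required for $(u,v) \in P_{\ran E,\, \ker E + \mul E}$, while $v \in \ran E$ is automatic. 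The trickiest point to spot is that the intermediate $z$ has to be massaged through $E$ twice before it can be recognised as a fixed point; this is the only place where super-idempotency is really used.

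For the intermediate implications, $iv) \Rightarrow iii)$ is immediate after noting that $\ker E + \mul E \subseteq \ran(I-E)$ holds unconditionally (from $(n,0), (0,m) \in E$), while the reverse inclusion drops out of the very definition of $P_{\ran E,\, \ker E + \mul E}$. For $iii) \Rightarrow ii)$, set $T := P_{\ker(I-E),\, \ker E}\ \hat{+}\ (\{0\} \times \mul E)$; Lemma \ref{always} supplies $T \subseteq E$, and the containment $\ker(I-E) \cap \ker E \subseteq \mul E$ (obtained by subtracting $(u,0)$ from $(u,u)$) makes $\mul T = \mul E$, so by Lemma \ref{lemalr} it remains to check $\dom E \subseteq \dom T = \ker(I-E) + \ker E$. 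Given $(x,y) \in E$, hypothesis $iii)$ writes $x - y = n + m$ with $n \in \ker E$, $m \in \mul E$, and then the perturbed pair $(x-n,\, y+m) = (x,y) - (n,0) + (0,m) \in E$ has equal entries, so $x - n \in \ker(I-E)$ and $x \in \ker(I-E) + \ker E$ as wanted.

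For the closing identity, $ii)$ gives $\dom E = \ker(I-E) + \ker E$, while Lemma \ref{resta} supplies $\ker(I-E) \subseteq \ran E \cap \dom E$. Since both $\ker E$ and $\ran E \cap \dom E$ are contained in $\dom E$, sandwiching yields $\dom E = \ran E \cap \dom E + \ker E$.
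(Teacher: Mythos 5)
Your proof is correct; all four implications and the closing identity check out. It does, however, run the cycle in the opposite direction from the paper, which redistributes the work. The paper proves $i)\Rightarrow ii)\Rightarrow iii)\Rightarrow iv)\Rightarrow i)$, so that $ii)\Rightarrow iii)$ and $iii)\Rightarrow iv)$ are read off for free from Lemma \ref{propmul} and Lemma \ref{always}, and the only real computations are the fixed-point extraction in $i)\Rightarrow ii)$ and an explicit construction of the intermediate point $x+y_2$ for $iv)\Rightarrow i)$. Your central step $i)\Rightarrow iv)$ is essentially the paper's same subtraction trick (recognising the intermediate $z$ as an element of $\ker(I-E)$), only aimed at the target $P_{\ran E,\,\ker E+\mul E}$ rather than at $P_{\ker(I-E),\ker E}\ \hat{+}\ (\{0\}\times\mul E)$; and your $ii)\Rightarrow i)$ via Lemma \ref{always2} is a clean shortcut the paper itself uses in Corollary \ref{supcor1}. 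The price of the reversed ordering is that $iv)\Rightarrow iii)$ and $iii)\Rightarrow ii)$ now need genuine arguments, and both of yours are sound: the unconditional inclusion $\ker E+\mul E\subseteq\ran(I-E)$ plus reading off the second coordinate of $I-E$ handles the former, and the perturbation $(x-n,\,y+m)$ together with Lemma \ref{lemalr} (whose hypothesis $\mul E\subseteq\mul T$ is immediate from $\{0\}\times\mul E\subseteq T$) handles the latter — this perturbation device is the same one the paper deploys later in Theorem \ref{corsubsup}. Net effect: same key ideas, a legitimate alternative routing, with slightly more total work than the paper's ordering.
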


\begin{proof}  $i) \Rightarrow ii)$:  Suppose that $E \subseteq E^2$ and let $(u,v) \in E,$ so that there exists $w$ such that $(u,w), (w,v) \in E.$ Then $(u-w,0) \in E,$ $(0,v-w) \in E$ and $(w,w)=(u,v)-(u-w,0)-(0,v-w) \in E.$ Therefore $(w,0) \in I-E$  and $(u,v)=(w,w)+(u-w, v-w) \in P_{\ker(I-E), \ker E} \ \hat{+} \ (\{0\}\times \mul E).$ This shows that $E \subseteq  P_{\ker(I-E), \ker E} \ \hat{+} \ (\{0\}\times \mul E).$ The other inclusion always holds (see Lemma \ref{always}).

$ii) \Rightarrow iii)$: Follows from Lemma \ref{propmul}. 

$iii) \Rightarrow iv)$: By Lemma \ref{always}, $E\subseteq P_{\ran E,\ran(I-E)}=P_{\ran E, \ker E +\mul E}.$

$iv) \Rightarrow i)$: Let $(u,v) \in E.$ Then $(u,v)=(x+y,x),$ with $ x\in \ran E,$ $y \in \ker E+ \mul E.$ So $y=y_1+y_2$ with $(y_1,0), (0,y_2)\in E.$ Then $(x+y,x+y_2),(x+y_2,x) \in E$ so that $(x+y,x)=(u,v) \in E^2.$

In this case, $\dom E \subseteq \dom(P_{\ran E,\ker E+\mul E})=\ran E+\ker E.$ So that $\dom E \subseteq \ran E \cap \dom E+\ker E.$ The other inclusion always holds. 
\end{proof}

\begin{corollary} \label{supcor1} The set of super-idempotents is $$\{P_{\M,\N} \ \hat{+} \ (\{0\} \times \St)\}.$$
\end{corollary}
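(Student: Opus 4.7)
The plan is to mirror the argument used for Corollary \ref{subcor1}, leveraging the structural characterization in Proposition \ref{supra} together with Lemma \ref{always2}(2).

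First, I would establish the inclusion that every super-idempotent belongs to the indicated set. By the equivalence $i) \Leftrightarrow ii)$ in Proposition \ref{supra}, any super-idempotent $E$ admits the representation
$$E = P_{\ker(I-E),\,\ker E} \ \hat{+} \ (\{0\} \times \mul E),$$
so $E$ has the form $P_{\M,\N} \ \hat{+} \ (\{0\} \times \St)$ upon setting $\M := \ker(I-E)$, $\N := \ker E$, and $\St := \mul E$.

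For the reverse inclusion, I would let $\M, \N, \St$ be arbitrary subspaces of $\HH$ and put $T := P_{\M,\N} \ \hat{+} \ (\{0\} \times \St)$. This is precisely the relation $T$ defined in \eqref{RT}, and item 2 of Lemma \ref{always2} yields $T \subseteq T^2$, so $T$ is super-idempotent. Combining both directions gives the set equality claimed.

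There is no substantial obstacle here; the result is essentially a repackaging of Proposition \ref{supra}(ii) and Lemma \ref{always2}(2). The only minor point to verify is that the triplet produced in the forward direction indeed yields $E$ on the nose (not merely an inclusion), but this is guaranteed by the equality in Proposition \ref{supra}(ii).
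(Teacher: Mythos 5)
Your proposal is correct and follows exactly the paper's own argument: Proposition \ref{supra}$(ii)$ for the forward inclusion and Lemma \ref{always2}(2) for the converse. Nothing is missing.
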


\begin{proof}  By Proposition \ref{supra}, any super-idempotent belongs to the set. Conversely, if $T:=\PMN   \ \hat{+} \ (\{0\} \times \St),$ by Lemma \ref{always2},  $T$ is super-idempotent. 
\end{proof}

\begin{remark} Let $E \in \lr(\HH)$ be super-idempotent. 

Then $E=P_{\M,\N} \ \hat{+} \ (\{0\} \times \St)$ if and only if $\ker(I-E)=\M+\N\cap \St,$ $\ker E=\N+\M \cap \St$ and $\mul E=\St + \M \cap \N.$ 
	
In fact, since   $E$ is super-idempotent, by Proposition \ref{supra}, $$E=P_{\ker(I-E), \ker E} \ \hat{+} \ (\{0\}\times \mul E).$$ So that, if $\ker(I-E)=\M+\N\cap \St,$ $\ker E=\N+\M \cap \St$ and $\mul E=\St + \M \cap \N,$ then  $E=P_{\M+\N \cap \St, \N+\M \cap \St} \hat{+} (\{0\} \times (\St+\M\cap\N))=P_{\M,\N} \ \hat{+} \ (\{0\} \times \St).$ The converse follows from Lemma \ref{propmul}.
\end{remark}

\begin{corollary} \label{subsupra} Let $E \in \lr(\HH).$ Then $E$ is sub-idempotent if and only if $I-E$  is sub-idempotent if and only if  $E^{-1}$  is sub-idempotent. An analogue result holds if $E$ is super-idempotent.
\end{corollary}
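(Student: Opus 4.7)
The plan is to exploit the classifications just established in Corollaries \ref{subcor1} and \ref{supcor1}, namely that the sub-idempotents are exactly the relations of the form $P_{\M,\N}\cap(\St\times\HH)$ and the super-idempotents are exactly the relations of the form $P_{\M,\N}\,\hat{+}\,(\{0\}\times\St)$, together with the explicit formulas for $I-R$, $R^{-1}$, $I-T$, $T^{-1}$ given in Lemma \ref{propmul2}.

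First I would treat the sub-idempotent case. Assume $E$ is sub-idempotent. By Corollary \ref{subcor1}, $E = P_{\M,\N}\cap(\St\times\HH)$ for some subspaces $\M,\N,\St$. Lemma \ref{propmul2} then gives
\begin{equation*}
I-E \;=\; P_{\N,\M}\cap(\St\times\HH), \qquad E^{-1} \;=\; P_{\St,\N}\cap(\M\times\HH),
\end{equation*}
and each of these is again of the form specified by Corollary \ref{subcor1}, hence sub-idempotent. The converses are immediate from the fact that $E\mapsto I-E$ and $E\mapsto E^{-1}$ are involutions: if $I-E$ is sub-idempotent then by the forward implication so is $I-(I-E)=E$, and similarly $E=(E^{-1})^{-1}$ is sub-idempotent whenever $E^{-1}$ is.

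For the super-idempotent analogue, I would proceed identically. If $E$ is super-idempotent, Corollary \ref{supcor1} gives $E = P_{\M,\N}\,\hat{+}\,(\{0\}\times\St)$, and Lemma \ref{propmul2} yields
\begin{equation*}
I-E \;=\; P_{\N,\M}\,\hat{+}\,(\{0\}\times\St), \qquad E^{-1} \;=\; P_{\M,\St}\,\hat{+}\,(\{0\}\times\N),
\end{equation*}
both of which have the super-idempotent form of Corollary \ref{supcor1}. The converse implications follow again from the involution argument.

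There is no real obstacle here: the entire content of the corollary is packaged inside Corollaries \ref{subcor1}, \ref{supcor1} and Lemma \ref{propmul2}, and the proof amounts to observing that the classes $\{P_{\M,\N}\cap(\St\times\HH)\}$ and $\{P_{\M,\N}\,\hat{+}\,(\{0\}\times\St)\}$ are each stable under the maps $E\mapsto I-E$ and $E\mapsto E^{-1}$, with the parameter triplet permuted in the manner recorded by Lemma \ref{propmul2}.
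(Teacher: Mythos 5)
Your proof is correct and is exactly the paper's argument: the paper's own proof simply cites Lemma \ref{propmul2} together with Corollaries \ref{subcor1} and \ref{supcor1}, which is precisely the machinery you invoke (you just spell out the closure of the two parametrized classes under $E\mapsto I-E$ and $E\mapsto E^{-1}$ and the involution argument for the converses). No further comment is needed.
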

\begin{proof} Use Lemma \ref{propmul2} and Corollaries \ref{subcor1} and \ref{supcor1}.
\end{proof}

\begin{proposition} \label{lemaIC} \label{propsub} \label{corsupra} The following statements hold:
\begin{enumerate}
\item[1.] Let $E$ be sub-idempotent. Then $E \in \Id(\HH)$ if and only if $\dom E=\ran E \cap \dom E +\ker E.$
\item[2.]  Let $E$ be super-idempotent. Then $E \in \Id(\HH)$ if and only if $\mul E \cap \dom E =\ran E \cap \ker E.$
\end{enumerate}
\end{proposition}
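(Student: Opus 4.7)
The ``only if'' directions in both parts fall out of work already done: if $E$ is idempotent, then it is both sub- and super-idempotent, so the two ``in this case'' conclusions at the end of Propositions~\ref{sub} and~\ref{supra} give the identities $\mul E \cap \dom E = \ran E \cap \ker E$ and $\dom E = \ran E \cap \dom E + \ker E$ at once. The real task is the converse in each part: to upgrade sub- to super-idempotence in~(1), and super- to sub-idempotence in~(2).

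For part~(1), assuming $E$ is sub-idempotent with $\dom E = \ran E \cap \dom E + \ker E$, I would verify $E \subseteq E^2$ directly. By Proposition~\ref{sub}, $\ker(I-E) = \ran E \cap \dom E$ and $P_{\ker(I-E), \ker E} \subseteq E$. Given $(u,v) \in E$, decompose $u = a + n$ with $a \in \ker(I-E)$ and $n \in \ker E$; then $(u,a) = (a,a) + (n,0) \in E$, so $(u,v) - (u,a) = (0, v-a) \in E$, whence $(a,v) = (a,a) + (0, v-a) \in E$, and composing through $a$ shows $(u,v) \in E^2$. This direction is essentially bookkeeping.

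For part~(2), assuming $E$ is super-idempotent with $\mul E \cap \dom E = \ran E \cap \ker E$, I would prove $E^2 \subseteq E$. Using the representation $E = P_{\ker(I-E), \ker E} \, \hat{+} \, (\{0\} \times \mul E)$ from Proposition~\ref{supra}, for $(u,v) \in E^2$ with intermediate $z$ I parameterize
$$
(u,z) = (a+n,\, a+s), \qquad (z,v) = (b+m,\, b+t),
$$
with $a,b \in \ker(I-E)$, $n,m \in \ker E$, $s,t \in \mul E$. Matching the two expressions for $z$ forces $a - b = m - s$, and this equality is the hinge of the argument.

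The main obstacle is to deduce from $a - b = m - s$ that $m \in \mul E$ and $s \in \ker E$, so that $(u,v)$ can be reassembled inside $E$. The plan is to note $(m, -s) = (m,0) + (0,-s) \in E$ and $(a-b, a-b) \in E$, whose difference is $(s, -m) \in E$. This forces $s \in \mul E \cap \dom E$ and $m \in \ran E \cap \ker E$, and the hypothesized equality trades each of these for membership in the opposite intersection, yielding $m \in \mul E$ and $s \in \ker E$ as required. Then $(u,v) = (a,a) + (n,0) + (0, s - m + t)$ expresses $(u,v)$ as a sum of three elements of $E$, so $(u,v) \in E$ and $E^2 \subseteq E$, which combined with super-idempotence makes $E$ idempotent.
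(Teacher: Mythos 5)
Your proposal is correct, but the converses are argued by a genuinely different mechanism than the paper's. The paper never touches $E^2$ directly: in part 1 it shows that the hypothesis forces $\dom E=\dom T$ for $T:=P_{\ran E\cap\dom E,\ker E}\ \hat{+}\ (\{0\}\times\mul E)\subseteq E$, and then invokes Lemma \ref{lemalr} (equality of relations from an inclusion plus matching domain and multivalued part) to conclude $E=T$, which is super-idempotent by Proposition \ref{supra}; part 2 is the mirror image, identifying $E$ with $P_{\ran E,\ran(I-E)}\cap(\dom E\times\HH)$ after checking $\mul R=\mul E$. You instead verify the missing inclusion elementwise: in part 1 the decomposition $u=a+n$ with $a\in\ker(I-E)=\ran E\cap\dom E$ and the factorization $(u,v)=(u,a)\circ(a,v)$ is clean and correct; in part 2 your parameterization $(a+n,a+s)$, $(b+m,b+t)$, the pivot identity $a-b=m-s$, and the resulting membership $(s,-m)\in E$ correctly extract $s\in\mul E\cap\dom E$ and $m\in\ran E\cap\ker E$, after which the hypothesis gives $m\in\mul E$ and the reassembly $(u,v)=(a,a)+(n,0)+(0,s-m+t)$ lands in $E$. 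What your route buys is self-containedness and transparency about exactly where the hypothesis $\mul E\cap\dom E=\ran E\cap\ker E$ enters (it converts $m$ from a kernel element into a multivalued element); what the paper's route buys is brevity and reuse of the structural machinery (Lemmas \ref{always} and \ref{lemalr}), which also makes the canonical representations of Corollary \ref{corid} drop out for free. Both are sound; I find no gap in your argument.
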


\begin{proof} $1$:  If $E \in \Id(\HH)$ then, by Proposition \ref{supra}, $\dom E = \ran E \cap \dom E +\ker E.$
Conversely, since $E$ is sub-idempotent, by Proposition \ref{sub}, 
$$T:=P_{\ran E \cap \dom E, \ker E} \ \hat{+} \ (\{0\} \times \mul E)\subseteq E$$ and  $\mul E \subseteq \mul T.$  Since $\dom E=\ran E \cap \dom E +\ker E =\dom T,$ by Lemma \ref{lemalr}, $E=T.$ Then, by Proposition \ref{supra}, $E$ is super-idempotent, so that $E \in \Id(\HH).$

$2$: If $E \in \Id(\HH)$ then, by Proposition \ref{sub}, $\mul E \cap \dom E =\ran E \cap \ker E.$
Conversely, since $E$ is super-idempotent, by Proposition \ref{supra}, $E \subseteq P_{\ran E, \ran(I-E)}.$ By Lemma \ref{always}, $E\subseteq P_{\ran E, \ran(I-E)} \cap (\dom E \times \HH):=R$ and $\dom R=(\ran E+\ran(I-E))\cap \dom E=\dom E.$ Also, $\mul R= \ran E \cap \ran (I-E)=\ran E \cap (\ker E+ \mul E)=\ran E \cap \ker E+\mul E = \mul E.$ Then $E=R,$ and by Proposition \ref{supra}, $E$ is sub-idempotent. Therefore $E \in \Id(\HH).$
\end{proof}

\begin{theorem} \label{corsubsup} Let $E \in \lr(\HH)$. Then
\begin{enumerate}
\item[1.]  $E$ is sub-idempotent if and only if  $$E^2=P_{\ker(I-E), \ker E}  \ \hat{+} \ (\{0\} \times \mul E).$$
\item[2.]  $E$ is super-idempotent if and only if $$E^2=P_{\ran E, \ran(I-E)} \cap (\dom E \times \HH).$$
\end{enumerate}
In either case, $E^2 \in \Id(\HH).$
\end{theorem}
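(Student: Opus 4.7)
The strategy is to establish each biconditional by showing, in each case, that $E^2$ is in fact idempotent, and then reading off the representation from Propositions~\ref{sub} and \ref{supra}. The backward directions of both items are immediate from Lemma~\ref{always}: if $E^2=P_{\ker(I-E),\ker E}\ \hat{+}\ (\{0\}\times\mul E)$, that lemma forces $E^2\subseteq E$; and if $E^2=P_{\ran E,\ran(I-E)}\cap(\dom E\times\HH)$, the same lemma forces $E\subseteq E^2$.

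For the forward direction of item~$1$, assume $E^2\subseteq E$. Monotonicity of the square gives $(E^2)^2\subseteq E^2$. For the reverse inclusion, take $(u,v)\in E^2$ with witness $w$ so that $(u,w),(w,v)\in E$; then $w\in\ran E\cap\dom E$, and Proposition~\ref{sub}$(iii)$ yields $(w,w)\in E$. Consequently both $(u,w)$ and $(w,v)$ lie in $E^2$, so $(u,v)\in E^4=(E^2)^2$. Hence $E^2$ is idempotent, and in particular super-idempotent; Proposition~\ref{supra}$(ii)$ applied to $E^2$, combined with Lemma~\ref{subsupker}$(1)$ (which collapses $\ker(I-E^2)$, $\ker E^2$ and $\mul E^2$ to their $E$-counterparts), delivers the claimed formula.

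The forward direction of item~$2$ mirrors item~$1$, but the sub-idempotency of $E^2$ is the main obstacle. From $E\subseteq E^2$, monotonicity yields $E^2\subseteq E^3\subseteq E^4=(E^2)^2$, so $E^2$ is super-idempotent. By Proposition~\ref{sub}$(iii)$ applied to $E^2$ and Lemma~\ref{subsupker}$(2)$, it remains to verify $\ran E\cap\dom E\subseteq\ker(I-E^2)$. For this I would invoke Corollary~\ref{supcor1} to write $E=P_{\M,\N}\ \hat{+}\ (\{0\}\times\St)$ with $\M=\ker(I-E)$, $\N=\ker E$, $\St=\mul E$, so that $\ran E=\M+\St$ and $\dom E=\M+\N$ by Lemma~\ref{propmul}. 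Any $w\in\ran E\cap\dom E$ admits decompositions $w=m_1+s=m_2+n$ with $m_i\in\M$, $s\in\St$, $n\in\N$; the element $z:=m_1-n=m_2-s$ (coinciding because $m_1-m_2=n-s$) then satisfies $(w,z)\in E$ via $w=m_2+n$, $z=m_2+(-s)$ and $(z,w)\in E$ via $z=m_1+(-n)$, $w=m_1+s$. Thus $(w,w)\in E^2$, so $E^2$ is sub-idempotent, hence idempotent; Proposition~\ref{sub}$(ii)$ together with Lemma~\ref{subsupker}$(2)$ then yields the asserted formula for $E^2$. The final ``in either case'' assertion is free, since $E^2$ was shown idempotent in both cases.
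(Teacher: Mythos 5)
Your proof is correct. It uses the same toolkit as the paper (Lemma \ref{always}, Lemma \ref{subsupker}, and the characterizations in Propositions \ref{sub} and \ref{supra}), but the logical order is reversed: you first prove that $E^2\in\Id(\HH)$ and then read off the representation from Proposition \ref{supra}$(ii)$ (resp.\ Proposition \ref{sub}$(ii)$) after translating $\ker(I-E^2),\ker E^2,\mul E^2$ (resp.\ $\ran E^2,\ran(I-E^2),\dom E^2$) back to their $E$-counterparts, whereas the paper establishes the identity $E^2=P$ (resp.\ $E^2=Q$) directly via the comparison criterion of Lemma \ref{lemalr} --- checking one inclusion plus agreement of domain and multivalued part --- and obtains $E^2\in\Id(\HH)$ only as a final byproduct. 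The element-level work also differs in item $2$: the paper verifies $\mul Q\subseteq\mul E^2$ by showing $\ran E\cap\ker E\subseteq\mul E^2$, while you verify condition $iii)$ of Proposition \ref{sub} for $E^2$ by constructing, for $w\in\ran E\cap\dom E=(\M+\St)\cap(\M+\N)$, the intermediate point $z=m_1-n=m_2-s$ with $(w,z),(z,w)\in E$; both computations are sound (your $(w,w)\in E^2$ argument correctly exploits the explicit form $E=P_{\M,\N}\ \hat{+}\ (\{0\}\times\St)$ granted by Proposition \ref{supra}). Your organization makes the closing ``in either case $E^2\in\Id(\HH)$'' assertion immediate, at the cost of invoking the representation propositions twice; the paper's route is marginally more economical but leaves idempotency of $E^2$ to a separate final step.
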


\begin{proof} $1$:   Set $P:=P_{\ker(I-E), \ker E}  \ \hat{+} \ (\{0\} \times \mul E).$
	
If $E^2=P$ then, since $P \subseteq E$ always holds, $E^2 \subseteq E$ and $E$ is sub-idempotent.  

Conversely, if $E^2 \subseteq E,$ by Lemma \ref{subsupker}, $P=P_{\ker(I-E^2), \ker E^2}  \ \hat{+} \ (\{0\} \times \mul E^2) \subseteq E^2.$ Also, $\mul E^2=\mul E \subseteq \mul P.$ It only remains to see that $\dom E^2\subseteq \dom P$ to apply Lemma \ref{lemalr} and get that $E^2=P.$ Let $x \in \dom E^2;$ then there exists $w \in\ran E \cap \dom E$ such that $(x,w), (w,y) \in E$ for some $y \in \HH.$ But, by Proposition \ref{sub}, $\ran E \cap \dom E=\ker(I-E);$ then $w \in \ker(I-E)$ or $(w,w) \in E.$ Hence $(x-w,0) \in E$ and $x=x-w+w\in \ker E+\ker(I-E)=\dom P.$ 

$2$: Set $Q:=P_{\ran E, \ran(I-E)} \cap (\dom E \times \HH).$

 If $E^2=Q,$ since $E \subseteq Q$ always holds, $E \subseteq E^2$ and $E$ is super-idempotent. 
 
 Conversely, if  $E \subseteq E^2,$  by Lemma \ref{subsupker}, $E^2 \subseteq Q=P_{\ran E^2, \ran(I-E^2)} \cap (\dom E^2 \times \HH).$   Also, $\dom Q= \dom E = \dom E^2.$ It only remains to see that $\mul Q \subseteq \mul E^2$ to apply Lemma \ref{lemalr} and get that $E^2=Q.$ 
By Lemma \ref{propmul} and Proposition \ref{supra}, $\mul Q=\ran E\cap\ran(I-E)=\ran E \cap(\ker E+\mul E)=\ran E\cap\ker E+\mul E\subseteq \ran E\cap\ker E+\mul E^2,$ where the inclusion holds because $E\subseteq E^2.$  To see that $\ran E \cap\ker E\subseteq \mul E^2,$  let $u\in \ran E \cap\ker E.$ Thus $(u,0)\in E$ and $(y,u)\in E$ for some $y\in\dom E .$ Hence $(y,0)\in E^2$ and $(y,u)\in E^2.$ Therefore $(0,u)\in E^2,$ i.e. $u\in \mul E^2.$

Finally, suppose that $E$ is sub-idempotent then $E^2$ is sub-idempotent. Since $E^2=P,$ it is also super-idempotent and then $E^2\in \Id(\HH).$ The case when $E$ is super-idempotent is similar.
\end{proof}

\begin{corollary} \label{propsubsuper2} Let $E \in \lr(\HH)$. Then:
\begin{enumerate}
\item[1.]  $E$ is sub-idempotent if and only if  $\ker E^2=\ker E,$ $\ker(I-E^2)=\ker(I-E),$ $\mul E^2=\mul E$ and $E^2$ is super-idempotent.
\item[2.]  $E$ is super-idempotent if and only if $\ran E^2=\ran E,$ $\ran(I-E^2)=\ran(I-E),$ $\dom E^2=\dom E$ and $E^2$ is sub-idempotent.
\end{enumerate}
\end{corollary}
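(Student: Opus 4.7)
The plan is to prove both items by extracting the forward directions from results already assembled, and for the converses, recognizing that the listed hypotheses force $E^2$ to coincide with a canonical relation known to sit between $E$ and itself.

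For the forward direction of item 1, I would invoke Lemma \ref{subsupker}(1) to harvest the three equalities $\ker E^2=\ker E$, $\ker(I-E^2)=\ker(I-E)$, and $\mul E^2=\mul E$. Then Theorem \ref{corsubsup}(1) gives
\[
E^2 = P_{\ker(I-E),\ker E} \ \hat{+} \ (\{0\} \times \mul E),
\]
which by Corollary \ref{supcor1} is a super-idempotent (it is of the form $P_{\M,\N} \ \hat{+} \ (\{0\} \times \St)$ with $\M=\ker(I-E)$, $\N=\ker E$, $\St=\mul E$). So all four conclusions follow.

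For the converse of item 1, I would use Proposition \ref{supra}(ii) applied to $E^2$, which since $E^2$ is super-idempotent, yields
\[
E^2 = P_{\ker(I-E^2),\ker E^2} \ \hat{+} \ (\{0\} \times \mul E^2).
\]
Substituting the three assumed equalities converts the right-hand side to $P_{\ker(I-E),\ker E} \ \hat{+} \ (\{0\} \times \mul E)$, which by Lemma \ref{always} is always contained in $E$. Therefore $E^2 \subseteq E$, so $E$ is sub-idempotent.

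Item 2 is proved dually, swapping the roles of range/domain with kernel/multivalued part. The forward direction combines Lemma \ref{subsupker}(2) with Theorem \ref{corsubsup}(2), writing $E^2 = P_{\ran E, \ran(I-E)} \cap (\dom E \times \HH)$, which by Corollary \ref{subcor1} is sub-idempotent. For the converse, Proposition \ref{sub}(ii) applied to the sub-idempotent $E^2$ gives $E^2 = P_{\ran E^2, \ran(I-E^2)} \cap (\dom E^2 \times \HH)$; plugging in the three equalities makes this equal to $P_{\ran E, \ran(I-E)} \cap (\dom E \times \HH)$, and Lemma \ref{always} guarantees $E$ is contained in this relation, so $E \subseteq E^2$. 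I do not expect any real obstacle here: the content is a clean repackaging of Theorem \ref{corsubsup} and Lemma \ref{subsupker}, with the only care needed being to verify that the super-/sub-idempotency of $E^2$, together with the equalities, suffices to identify $E^2$ with the canonical relation sandwiched into $E$ by Lemma \ref{always}.
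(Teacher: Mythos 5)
Your proposal is correct and follows essentially the same route as the paper: forward directions via Lemma \ref{subsupker} and Theorem \ref{corsubsup}, and converses by identifying $E^2$ with the canonical relation $P_{\ker(I-E),\ker E}\ \hat{+}\ (\{0\}\times\mul E)$ (resp.\ $P_{\ran E,\ran(I-E)}\cap(\dom E\times\HH)$) via Proposition \ref{supra} (resp.\ Proposition \ref{sub}) and the substituted equalities. The only cosmetic difference is that you finish by invoking the containment from Lemma \ref{always} directly, whereas the paper cites the biconditional of Theorem \ref{corsubsup}; these are the same argument, since that containment is exactly how the easy direction of Theorem \ref{corsubsup} is proved.
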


\begin{proof} If $E$ is sub-idempotent then, by Lemma \ref{subsupker} and Theorem \ref{corsubsup}, the result follows. Conversely, by Proposition \ref{supra}, $$E^2=P_{\ker(I-E^2), \ker E^2}  \ \hat{+} \ (\{0\} \times \mul E^2)=P_{\ker(I-E), \ker E}  \ \hat{+} \ (\{0\} \times \mul E).$$ Then, by Theorem \ref{corsubsup}, $E$ is sub-idempotent. 
The case when $E$ is super-idempotent is similar. 
\end{proof}

The following example shows that there are linear relations which are sub-idempotent but not super-idempotent and viceversa.

\begin{example} \label{Ex1}
\begin{enumerate}
\item[1.] Take $R:=\PMN \cap (\St\times \HH),$ with  $\M\cap \St+ \N \cap \St \subsetneq (\M+\N) \cap \St.$ Then $R$ is sub-idempotent but not super-idempotent. In fact, by Lemma \ref{propmul},  $\dom R \cap \ran R+\ker R  = \M\cap \St+ \N \cap \St \subsetneq (\M+\N) \cap \St =\dom R.$ Then, by Proposition \ref{lemaIC}, $R$ is not super-idempotent.
	
\item[2.]  Take $T:=\PMN   \ \hat{+} \ (\{0\} \times \St),$ with $\St+\N\cap \M \subsetneq \St + \N \cap (\M+\St).$ Then $T$ is super-idempotent but not sub-idempotent. Indeed, by Lemma \ref{propmul}, $\mul T = \St+\N\cap \M \subsetneq \St+\N \cap (\M+\St)=\ran T \cap \ran(I-T).$ Therefore, by Proposition \ref{sub}, $T$ is not sub-idempotent.
\end{enumerate}
\end{example}

\begin{remark} If $E$ is a super-idempotent operator on $\HH,$ since $\mul E=\{0\},$ by Proposition \ref{supra}, $E=P_{\ker(I-E),\ker E}$ and $\ker(I-E) \cap \ker E=\{0\}.$ So that $E$ is a projection.

If $E$ is a sub-idempotent operator then  $E=P_{\ran E, \ran(I-E)} \cap (\dom E \times \HH)$ and $\{0\}=\mul E =\ran E \cap \ \ran(I-E).$ Then $E$ is a restriction of a projection and $E$ is a projection if and only if $\dom E=\ran E+ \ran(I-E).$  
\end{remark}

\section{Idempotent linear relations}
We begin this section with a list of corollaries regarding idempotent relations which follow immediately from the results in the previous section and the fact that $E\in\Id(\HH)$ if and only if $E$ is sub- and super-idempotent.

\begin{corollary}[{\cite[Propositions 2.2 and 2.4]{Labrousse}}] \label{propidemL} Let $E, F  \in \lr(\HH).$ Then 
$E\in \Id(\HH)$ if and only if  $I-E\in \Id(\HH)$ if and only if $E^{-1}\in \Id(\HH).$ 
\end{corollary}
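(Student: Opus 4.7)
The plan is to reduce this statement directly to the corresponding results for the sub-idempotent and super-idempotent classes. The key observation is the trivial but crucial equivalence
\[
E \in \Id(\HH) \iff E^2 = E \iff E^2 \subseteq E \text{ and } E \subseteq E^2,
\]
so that membership in $\Id(\HH)$ is equivalent to being simultaneously sub-idempotent and super-idempotent. This is the bridge that lets us import the stability properties already established for each of these two wider classes.

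Next I would invoke Corollary \ref{subsupra}, which asserts that each of the two conditions ``sub-idempotent'' and ``super-idempotent'' is preserved under the operations $E \mapsto I-E$ and $E \mapsto E^{-1}$. Applying this to both conditions simultaneously yields: $E$ is sub-idempotent and super-idempotent if and only if $I-E$ is sub-idempotent and super-idempotent, if and only if $E^{-1}$ is sub-idempotent and super-idempotent. Combined with the equivalence above, this gives
\[
E \in \Id(\HH) \iff I-E \in \Id(\HH) \iff E^{-1} \in \Id(\HH),
\]
which is exactly the statement to prove.

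There is no real obstacle here: the whole argument is a one-line bookkeeping step once one recognizes that idempotency is the conjunction of the sub- and super-idempotent properties. The substance has already been absorbed in Corollary \ref{subsupra}, whose proof in turn relies on the representations of Corollaries \ref{subcor1} and \ref{supcor1} together with the symmetry formulas of Lemma \ref{propmul2}. Consequently I would present the proof in essentially two sentences, explicitly citing Corollary \ref{subsupra} for each of the two equivalences.
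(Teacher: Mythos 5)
Your proposal is correct and matches the paper's proof exactly: the paper also observes that $E\in\Id(\HH)$ precisely when $E$ is both sub- and super-idempotent and then applies Corollary \ref{subsupra} to each condition. Nothing is missing.
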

\begin{proof} Apply Corollary \ref{subsupra}.
\end{proof}

\begin{example}
Suppose that $E$ is a projection on $\HH.$ By Corollary \ref{propidemL}, $E^{-1}$ is idempotent but since $\dom E^{-1} =\ran E$ and $\ran E^{-1}=\dom E,$ then $E^{-1}$  may not even be a semi-projection.
\end{example}

\begin{corollary} \label{corid} \label{corid2}\label{coridsub}\label{coridsup}  Let $E \in \lr(\HH).$ The following are equivalent:
\begin{enumerate}
\item $E \in \Id(\HH)$;
\item $E=P_{\ker(I-E),\ker E} \ \hat{+} \ (\{0\} \times \mul E)=P_{\ran E, \ran(I-E)} \cap (\dom E \times \HH)$;
\item $E=P_{\ran E, \ran(I-E)}  \cap \ (\dom E \times \HH)$ and $\dom E = \ran E \cap \dom E +\ker E$;
\item $E=P_{\ker(I-E), \ker E} \  \hat{+} \ (\{0\} \times \mul E)$ and $\mul E \cap \dom E =\ran E \cap \ker E.$
\end{enumerate}
\end{corollary}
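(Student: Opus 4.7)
The overall plan is to use the fact that $E\in\Id(\HH)$ if and only if $E$ is both sub-idempotent and super-idempotent, together with the characterizations of sub- and super-idempotents from Propositions \ref{sub} and \ref{supra}, plus the ``idempotency conditions'' supplied by Proposition \ref{lemaIC}. Concretely, I would show $i)\Leftrightarrow ii)$ first, then argue $i)\Leftrightarrow iii)$ and $i)\Leftrightarrow iv)$ in parallel since they are dual to each other.

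For $i)\Leftrightarrow ii)$: if $E\in\Id(\HH)$ then $E$ is both sub- and super-idempotent, so both displayed equalities in $ii)$ hold by Propositions \ref{sub} and \ref{supra}. Conversely, if either one of the equalities in $ii)$ holds, then $E$ is already an idempotent by the same two propositions (one gives super-idempotency, the other sub-idempotency), and $ii)$ being the conjunction certainly implies $i)$.

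For $i)\Rightarrow iii)$: the equality $E=P_{\ran E,\ran(I-E)}\cap(\dom E\times\HH)$ follows from Proposition \ref{sub} since $E$ is sub-idempotent, and the relation $\dom E=\ran E\cap\dom E+\ker E$ follows from the ``In this case'' part of Proposition \ref{supra} since $E$ is super-idempotent. For the converse $iii)\Rightarrow i)$: the first condition in $iii)$ makes $E$ sub-idempotent by Proposition \ref{sub}, and then Proposition \ref{lemaIC}(1) combined with the domain identity in $iii)$ promotes $E$ to an idempotent.

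The argument $i)\Leftrightarrow iv)$ is the dual: for $i)\Rightarrow iv)$, the equality comes from Proposition \ref{supra} (super-idempotency) and $\mul E\cap\dom E=\ran E\cap\ker E$ comes from the ``In this case'' statement of Proposition \ref{sub} (sub-idempotency). For $iv)\Rightarrow i)$, the displayed equality makes $E$ super-idempotent by Proposition \ref{supra}, and then Proposition \ref{lemaIC}(2) together with the multivalued-part identity in $iv)$ yields $E\in\Id(\HH)$. No step is really an obstacle here; everything is bookkeeping on top of the previous section, and the only thing to be careful about is correctly invoking the right half of Proposition \ref{lemaIC} in the two converse directions.
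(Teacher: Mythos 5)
Your proposal is correct and takes essentially the same route as the paper, whose entire proof reads ``Apply Propositions \ref{sub}, \ref{supra} and \ref{lemaIC}'' — exactly the bookkeeping you carry out. (One wording slip in your $i)\Leftrightarrow ii)$ step: a single one of the two equalities only yields sub- or super-idempotency, not idempotency outright, but your parenthetical and the final appeal to the conjunction make clear the argument you intend is the correct one.)
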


\begin{proof}
Apply Propositions \ref{sub}, \ref{supra} and  \ref{lemaIC}.
\end{proof}

\begin{corollary} \label{kerran} \label{kerran2} Let $E  \in \Id(\HH).$ Then
\begin{enumerate}
\item [1.] $\dom E=\ker(I-E)+\ker E.$
\item [2.] $\ran E=\ker(I-E)+\mul E.$
\item [3.] $\ker E=\ran(I-E) \cap \dom(E). $
\item [4.] $\mul E= \ran E \cap \ran(I-E).$
\end{enumerate}
\end{corollary}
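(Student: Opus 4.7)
The plan is to exploit the fact that, by Corollary \ref{propidemL}, both $I-E$ and $E^{-1}$ are also idempotent whenever $E$ is, and that every element of $\Id(\HH)$ is simultaneously sub- and super-idempotent. All four identities then fall out by matching the appropriate characterization from Propositions \ref{sub} and \ref{supra} (or Corollary \ref{corid}) to $E$, $I-E$, or $E^{-1}$.

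Concretely, for item 1, I would use the ``In this case'' conclusion of Proposition \ref{supra}, which gives $\dom E=\ran E\cap \dom E+\ker E$, and substitute $\ran E\cap \dom E=\ker(I-E)$ coming from Proposition \ref{sub}. For item 4, I would invoke Corollary \ref{corid}$(ii)$ to write $E=P_{\ran E,\ran(I-E)}\cap(\dom E\times\HH)$, so that Lemma \ref{propmul} with $\M=\ran E$, $\N=\ran(I-E)$, $\St=\dom E$ immediately identifies $\mul E=\ran E\cap\ran(I-E)$.

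Item 3 would be obtained by applying the sub-idempotent identity $\ker(I-F)=\ran F\cap \dom F$ (Proposition \ref{sub}) to the idempotent $F:=I-E$, after noting that $\dom(I-E)=\dom E$ and $\ran(I-E)$ is the range of $F$. For item 2, I would apply item 1, already proved, to the idempotent $E^{-1}$: one has $\dom E^{-1}=\ran E$ and $\ker E^{-1}=\mul E$, while Lemma \ref{resta} shows that $u\in\ker(I-E^{-1})$ iff $(u,u)\in E^{-1}$ iff $(u,u)\in E$ iff $u\in\ker(I-E)$, so that $\ker(I-E^{-1})=\ker(I-E)$, and item 1 rewritten for $E^{-1}$ reads exactly $\ran E=\ker(I-E)+\mul E$.

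There is no real obstacle: each identity is a one-line consequence of a previously established formula. The only thing to be careful about is being explicit about which object ($E$, $I-E$, or $E^{-1}$) a given characterization is being applied to, and verifying the elementary translations of $\dom$, $\ran$, $\ker$, $\mul$ and $\ker(I-\cdot)$ under the operations $E\mapsto I-E$ and $E\mapsto E^{-1}$.
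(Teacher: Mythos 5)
Your proposal is correct and follows essentially the same route as the paper, whose proof is simply the instruction to use Propositions \ref{sub} and \ref{supra}; you have just made explicit the routine translations under $E\mapsto I-E$ and $E\mapsto E^{-1}$ that the paper leaves implicit (and itself uses, e.g.\ inside the proof of Proposition \ref{sub}). The only micro-difference is item 2, where the detour through $E^{-1}$ is unnecessary: Proposition \ref{supra}$(ii)$ together with Lemma \ref{propmul} gives $\ran E=\ker(I-E)+\mul E$ directly.
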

\begin{proof} Use Propositions \ref{sub} and \ref{supra}.
\end{proof}

\begin{corollary} \label{IDset} The set of idempotent linear relations can be expressed as
$$
\{P_{\M \cap \St, \N \cap \St} \ \hat{+} \ (\{0\} \times (\M \cap \N))\}.$$ Alternatively,
$$\{P_{\M + \St, \N + \St} \cap  ((\M+\N) \times \HH)\}.$$
\end{corollary}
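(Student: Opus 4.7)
The plan is to establish $\Id(\HH) = \mc A = \mc B$, where $\mc A$ and $\mc B$ denote the two families in the statement. I would prove each equality by double inclusion, exploiting the representations of idempotents from Corollary \ref{corid} together with the characterizations of sub- and super-idempotents from Corollaries \ref{subcor1} and \ref{supcor1}.

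For $\Id(\HH) \subseteq \mc A$, given $E \in \Id(\HH)$, I take $\M := \ran E$, $\N := \ran(I-E)$, $\St := \dom E$. Proposition \ref{sub} applied to $E$ and to $I-E$ (also idempotent, by Corollary \ref{propidemL}) identifies $\M \cap \St = \ker(I-E)$ and $\N \cap \St = \ker E$, while Corollary \ref{kerran} gives $\M \cap \N = \mul E$; the representation $E = P_{\ker(I-E),\ker E} \ \hat{+} \ (\{0\}\times \mul E)$ from Corollary \ref{corid} then exhibits $E \in \mc A$. Conversely, fixing $\M, \N, \St$ and setting $E := P_{\M \cap \St, \N \cap \St} \ \hat{+} \ (\{0\}\times (\M\cap \N))$, Corollary \ref{supcor1} yields that $E$ is super-idempotent, so by Proposition \ref{lemaIC} I need only verify the idempotency condition $\mul E \cap \dom E = \ran E \cap \ker E$. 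Applying Lemma \ref{propmul} and simplifying via the inclusions $\M\cap\N\cap\St \subseteq \M\cap\N$ and $\M\cap\N\cap\St \subseteq \N\cap\St$ yields $\dom E = \M\cap\St + \N\cap\St$, $\ran E = \M\cap\St + \M\cap\N$, $\ker E = \N\cap\St$, $\mul E = \M\cap\N$, so the condition reduces to the subspace identity $(\M\cap\N)\cap(\M\cap\St + \N\cap\St) = \M\cap\N\cap\St = (\M\cap\St + \M\cap\N)\cap(\N\cap\St)$.

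This identity, which I expect to be the main technical step, is handled by observing that if $x \in \M\cap\N$ admits a decomposition $x = m+n$ with $m \in \M\cap\St$ and $n \in \N\cap\St$, then $m = x - n \in \N$ and $n = x - m \in \M$, forcing both summands into $\M\cap\N\cap\St$; the second equality follows by a symmetric argument. The equality $\Id(\HH) = \mc B$ is handled dually and is in fact simpler, since sums behave more transparently than intersections. For $\Id(\HH) \subseteq \mc B$ choose $\M := \ker(I-E)$, $\N := \ker E$, $\St := \mul E$; Corollary \ref{kerran} gives $\M + \St = \ran E$, $\N + \St = \ran(I-E)$, and $\M + \N = \dom E$, so the form matches the other representation in Corollary \ref{corid}. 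Conversely, $E := P_{\M+\St, \N+\St} \cap ((\M+\N)\times\HH)$ is sub-idempotent by Corollary \ref{subcor1}, and by Proposition \ref{lemaIC} idempotency reduces to $\dom E = \ker(I-E) + \ker E$; using Lemma \ref{propmul} to identify $\dom E = \M+\N$, $\ker(I-E) = (\M+\St)\cap(\M+\N)$, and $\ker E = (\N+\St)\cap(\M+\N)$, the equality $\M+\N = (\M+\St)\cap(\M+\N) + (\N+\St)\cap(\M+\N)$ is immediate because each summand is contained in $\M+\N$ while containing $\M$ and $\N$ respectively.
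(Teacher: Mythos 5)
Your proposal is correct and follows essentially the same route as the paper: both directions pivot on the parametrizations of sub- and super-idempotents (Corollaries \ref{subcor1} and \ref{supcor1}), the computations of Lemma \ref{propmul}, and the idempotency criterion of Proposition \ref{lemaIC}, with your forward direction merely instantiating the paper's arbitrary triple as $(\ran E,\ran(I-E),\dom E)$ and identifying the intersections via Proposition \ref{sub} and Corollary \ref{kerran}. Your explicit verification of $(\M\cap\N)\cap(\M\cap\St+\N\cap\St)=\M\cap\N\cap\St=(\M\cap\St+\M\cap\N)\cap(\N\cap\St)$ is sound and simply fills in a step the paper asserts without detail.
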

\begin{proof}  If $E \in \Id(\HH)$ then $E$ is sub-idempotent and, by Corollary \ref{subcor1}, $E=\PMN \cap(\St \times \HH)$ for some  subspaces $\M, \N, \St \subseteq \HH.$  Then, by Theorem \ref{corsubsup} and Lemma \ref{propmul}, 
$E=E^2=P_{\ker(I-E), \ker E} \ \hat{+} \ (\{0\} \times \mul E)=P_{\M \cap \St, \N \cap \St} \ \hat{+} \ (\{0\} \times (\M \cap \N)).$

Conversely, if $E=P_{\M \cap \St, \N \cap \St} \ \hat{+} \ (\{0\} \times (\M \cap \N))$  for some subspaces $\M, \N, \St \subseteq \HH$  then, by Corollary \ref{supcor1}, $E$ is super-idempotent. By Lemma \ref{propmul}, $\mul E \cap \dom E= \M \cap \N \cap \St =\ran E \cap \ker E.$ Therefore, by Proposition \ref{lemaIC}, $E \in \Id(\HH).$ The second equality follows in a similar way.
\end{proof}

\begin{proposition} \label{propidem} Let $E  \in \lr(\HH).$ Then
$E\in \Id(\HH)$ if and only if  $\dom E \subseteq \ran E+\ker E$ and $I_{\ran E \cap \dom E} \subseteq E.$ 
\end{proposition}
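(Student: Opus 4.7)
The plan is to use the characterisations already obtained: Proposition \ref{sub} for the sub-idempotent condition and Proposition \ref{supra} for the super-idempotent condition, so that $E\in\Id(\HH)$ follows from having both inclusions.

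For the forward implication, assume $E\in\Id(\HH)$. By Corollary \ref{kerran}, $\dom E=\ker(I-E)+\ker E$, and since $\ker(I-E)\subseteq\ran E$ (Lemma \ref{resta}), this gives $\dom E\subseteq\ran E+\ker E$. The inclusion $I_{\ran E\cap\dom E}\subseteq E$ follows from Proposition \ref{sub}, item $iii)$, which identifies $\ran E\cap\dom E=\ker(I-E)$, together with $I_{\ker(I-E)}\subseteq E$ (Lemma \ref{always}).

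For the reverse implication, assume the two hypotheses. First I would show $E$ is sub-idempotent by verifying condition $iv)$ of Proposition \ref{sub}: since $(n,0)\in E$ for every $n\in\ker E$, we have $\ker E\times\{0\}\subseteq E$, and combined with the standing assumption $I_{\ran E\cap\dom E}\subseteq E$ this yields
\[P_{\ran E\cap\dom E,\,\ker E}=I_{\ran E\cap\dom E}\ \hat{+}\ (\ker E\times\{0\})\subseteq E.\]
Next I would show $E$ is super-idempotent via condition $iii)$ of Proposition \ref{supra}, namely $\ran(I-E)=\ker E+\mul E$. The inclusion $\ker E+\mul E\subseteq\ran(I-E)$ is immediate from Lemma \ref{resta}. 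For the reverse, take $w\in\ran(I-E)$, pick $u$ with $(u,u-w)\in E$, decompose $u=r+n$ with $r\in\ran E$ and $n\in\ker E$ (using $\dom E\subseteq\ran E+\ker E$); then $r=u-n\in\dom E$, so $r\in\ran E\cap\dom E$ and hence $(r,r)\in E$ by hypothesis. Subtracting $(u,r)=(r,r)+(n,0)\in E$ from $(u,u-w)\in E$ gives $(0,u-w-r)\in E$, i.e.\ $u-w-r\in\mul E$, whence $w=n-(u-w-r)\in\ker E+\mul E$. Having both inclusions, Propositions \ref{sub} and \ref{supra} yield $E^2\subseteq E\subseteq E^2$, i.e.\ $E\in\Id(\HH)$.

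The only delicate step is the verification of $\ran(I-E)\subseteq\ker E+\mul E$, because it requires simultaneously exploiting the inclusion $\dom E\subseteq\ran E+\ker E$ and the partial identity $I_{\ran E\cap\dom E}\subseteq E$ within the same computation; everything else is a direct application of the previously established criteria.
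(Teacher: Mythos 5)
Your proof is correct. The forward direction and the verification of sub-idempotency in the converse coincide with the paper's argument: both reduce to condition $iv)$ of Proposition \ref{sub} via $P_{\ran E\cap\dom E,\ker E}\subseteq E$. Where you diverge is in how the converse is finished. The paper observes that $\dom E\subseteq\ran E+\ker E$ forces $\dom E=\ran E\cap\dom E+\ker E$ (decompose $x=r+n$ and note $r=x-n\in\dom E$) and then simply invokes Proposition \ref{lemaIC}, item 1, which already packages the passage from ``sub-idempotent with this domain identity'' to ``idempotent.'' You instead re-establish super-idempotency from scratch by verifying condition $iii)$ of Proposition \ref{supra}, i.e.\ $\ran(I-E)=\ker E+\mul E$, through an elementwise computation; that computation is sound (the decomposition $u=r+n$ with $r\in\ran E\cap\dom E$, the subtraction $(u,u-w)-(r,r)-(n,0)=(0,u-w-r)$, and the identity $w=n-(u-w-r)$ all check out), but it essentially reproves, in this special case, the content of Proposition \ref{lemaIC}. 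The paper's route is shorter because it leans on that already-proved equivalence; yours is more self-contained and makes visible exactly where both hypotheses enter, at the cost of redoing work the paper had factored out.
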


\begin{proof} If $E \in \Id(\HH)$ then, by Proposition \ref{supra}, $\dom E \subseteq \ran E+\ker E$ and, by Proposition \ref{sub}, $I_{\ran E \cap \dom E} \subseteq E.$ 
	
Conversely, since $I_{\ran E \cap \dom E} \subseteq E,$  $P_{\ran E \cap \dom E, \ker E} \subseteq E.$ By Proposition \ref{sub}, $E^2 \subseteq E.$ If $\dom E \subseteq \ran E+\ker E$ then $\dom E = \ran E \cap \dom E+\ker E.$ Therefore, by Proposition \ref{lemaIC}, $E^2=E.$
\end{proof}

\subsection{The idempotency condition}

This subsection is devoted to get a representation of idempotent relations similar to the representation of semi-projections (\ref{semiproy-rep}).

\begin{proposition} \label{min} There exists $E \in \Id(\HH)$ such that 
\begin{equation} \label{minc}
 \M\subseteq \ker(I-E), \  \N \subseteq \ker E  \mbox{ and } \St \subseteq \mul E.
\end{equation}  
Moreover, $E_0:=P_{\M+\St, \N+\St}  \cap ((\M+\N)\times \HH)$ is the smallest idempotent satisfying \eqref{minc}.
\end{proposition}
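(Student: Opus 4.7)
The plan is to establish existence by directly verifying that $E_0$ works, and then to obtain minimality by squeezing every candidate idempotent between a natural super-idempotent $T$ and its square $T^2$, which will turn out to equal $E_0$.

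For the existence part, I would first observe that $E_0$ fits the second parametrization in Corollary \ref{IDset} with parameters $\M, \N, \St$, so $E_0 \in \Id(\HH)$ automatically. To check \eqref{minc}, I would unpack membership in $E_0$: a pair $(x,y)$ belongs to $E_0$ if and only if $x \in \M+\N$ and there is a decomposition $x = u+v$ with $u \in \M+\St$, $v \in \N+\St$, and $y=u$. Then $(m,m) \in E_0$ for every $m \in \M$ (take $u=m$, $v=0$), $(n,0) \in E_0$ for every $n \in \N$ (take $u=0$, $v=n$), and $(0,s) \in E_0$ for every $s \in \St$ (take $u=s \in \M+\St$ and $v=-s \in \N+\St$). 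These three verifications deliver $\M \subseteq \ker(I-E_0)$, $\N \subseteq \ker E_0$, and $\St \subseteq \mul E_0$.

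For minimality, the idea is to introduce the super-idempotent
\[
T := P_{\M,\N} \ \hat{+} \ (\{0\} \times \St),
\]
which belongs to the class described by Corollary \ref{supcor1}. If $E$ is any idempotent satisfying \eqref{minc}, Corollary \ref{corid} gives $E = P_{\ker(I-E), \ker E} \ \hat{+} \ (\{0\} \times \mul E)$, and the three inclusions imply $P_{\M,\N} \subseteq P_{\ker(I-E), \ker E}$ (directly from \eqref{semiproy-rep}) and $\{0\} \times \St \subseteq \{0\} \times \mul E$; hence $T \subseteq E$. Taking squares, $T^2 \subseteq E^2 = E$, so it only remains to identify $T^2$ with $E_0$.

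The crux of the argument, and the step I expect to be the main obstacle to spot, is the identification $T^2 = E_0$. Since $T$ is super-idempotent, Theorem \ref{corsubsup}(2) yields $T^2 = P_{\ran T, \ran(I-T)} \cap (\dom T \times \HH)$. Lemma \ref{propmul}(2) applied to $T$ gives $\dom T = \M+\N$ and $\ran T = \M+\St$, while Lemma \ref{propmul2}(2) rewrites $I-T = P_{\N,\M} \ \hat{+} \ (\{0\} \times \St)$, whose range (again by Lemma \ref{propmul}(2)) is $\N+\St$. Plugging in gives $T^2 = P_{\M+\St,\N+\St} \cap ((\M+\N) \times \HH) = E_0$, completing the proof.
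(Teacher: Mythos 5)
Your proof is correct, and while the existence half travels the same road as the paper's, the minimality half takes a genuinely different route. For existence, the paper shows $E_0$ is sub-idempotent from its defining form, super-idempotent via the identity $E_0=P_{(\M+\N)\cap(\M+\St),\N}\ \hat{+}\ (\{0\}\times\St)$, and reads the inclusions off Lemma \ref{propmul}; you instead quote Corollary \ref{IDset} (which packages exactly that argument) and verify the inclusions by exhibiting the pairs $(m,m)$, $(n,0)$, $(0,s)$ directly --- same substance, different bookkeeping. For minimality, the paper works on $E$ itself: from \eqref{minc} it gets $(\M+\N)\cap(\M+\St)\subseteq\ran E\cap\dom E$, invokes Proposition \ref{propidem} to conclude $I_{(\M+\N)\cap(\M+\St)}\subseteq E$, and assembles $E_0\subseteq E$ from the sum representation of $E_0$. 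You instead squeeze: $T:=P_{\M,\N}\ \hat{+}\ (\{0\}\times\St)$ satisfies $T\subseteq E$ by Corollary \ref{corid} and monotonicity of $\hat{+}$, hence $T^2\subseteq E^2=E$, and Theorem \ref{corsubsup} combined with Lemmas \ref{propmul} and \ref{propmul2} identifies $T^2$ with $E_0$. Your version has the virtue of explaining where $E_0$ comes from --- it is the square of the minimal super-idempotent satisfying \eqref{minc} --- and of replacing the paper's ``easy to check'' identity with a citation of Theorem \ref{corsubsup}; the paper's version is marginally more elementary at this spot, needing only Proposition \ref{propidem}. Both arguments are complete and all the results you cite precede the proposition, so there is no circularity.
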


\begin{proof} 
By Lemma \ref{propmul}, $\M \subseteq (\M+\St) \cap (\M+\N)= \ker(I-E_0),$ $ \N \subseteq (\N+\St)\cap (\N+\M) = \ker E_0$ and $\St\subseteq (\St+\M) \cap (\St+\N) = \mul E_0.$ By  Corollary \ref{subcor1}, $E_0$ is sub-idempotent. 

It is easy to check that $E_0=P_{(\M+\N) \cap (\M+\St),\N} \ \hat{+} \ (\{0\}\times \St)$ so that, by Corollary \ref{supcor1}, $E_0$ is  super-idempotent. Then the idempotent $E_0$ satisfies \eqref{minc}.

Now, if $E \in \Id(\HH)$  satisfies (\ref{minc}) then 
$(\M+\St) \cap (\M+\N) \subseteq \ran E \cap \dom E$ so that, by Proposition \ref{propidem}, $I_{(\M+\N) \cap (\M+\St)}  \subseteq E.$ Since $\N \times \St \subseteq \ker E \times \mul E,$ we get that $E_0 \subseteq E.$
\end{proof}

\begin{proposition} \label{max} Let $\mc X, \mc Y, \mc Z$ be subspaces of $\HH$. Then there exists $F \in \Id(\HH)$ such that 
\begin{equation} \label{minc2}
\ran F \subseteq \mc X, \  \ran(I-F) \subseteq \mc Y \mbox{ and } \dom F\subseteq \mc Z.
\end{equation}  
Moreover, $F_0:=P_{\X\cap\Z, \Y\cap\Z}  \ \hat{+} \ ((\X\cap \Y)\times \HH)$ is the largest idempotent satisfying \eqref{minc2}.
\end{proposition}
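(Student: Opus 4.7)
The argument mirrors Proposition~\ref{min}, working now ``from the outside in''. First I would recognise the candidate $F_0$ as belonging to the family of Corollary~\ref{IDset}: substituting $\M := \X$, $\N := \Y$, $\St := \Z$ into the parametrisation $\{P_{\M \cap \St,\, \N \cap \St} \ \hat{+}\ (\{0\} \times (\M \cap \N))\}$ produces an idempotent of the form $P_{\X \cap \Z,\, \Y \cap \Z} \ \hat{+}\ (\{0\} \times (\X \cap \Y))$. (I read the displayed expression as containing a minor typo in its multivalued part; the intended relation is the one that fits Corollary~\ref{IDset}.)

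Next, I would verify \eqref{minc2} for $F_0$ directly from Lemma~\ref{propmul}: it yields $\dom F_0 = \X \cap \Z + \Y \cap \Z \subseteq \Z$, $\ran F_0 = \X \cap \Z + \X \cap \Y \subseteq \X$, and $\ker F_0 + \mul F_0 = \Y \cap \Z + \X \cap \Y \subseteq \Y$. Since $F_0$ is super-idempotent, Proposition~\ref{supra} turns the last expression into $\ran(I - F_0)$, completing the verification.

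For maximality, let $F \in \Id(\HH)$ satisfy \eqref{minc2} and pick $(x,y) \in F$. By Corollary~\ref{kerran}, write $x = a + b$ with $a \in \ker(I - F)$ and $b \in \ker F$, so that $(x,y) = (a,a) + (b,0) + (0,\, y - a)$ and $y - a \in \mul F$. The key step is to locate each summand within the defining pieces of $F_0$. By Lemma~\ref{resta}, $a \in \ker(I-F) \subseteq \ran F \cap \dom F \subseteq \X \cap \Z$, so $(a,a) \in P_{\X \cap \Z, \Y \cap \Z}$. For $b$, the membership $(b,0) \in F$ gives $(b,b) \in I - F$, hence $b \in \ran(I-F) \cap \dom F \subseteq \Y \cap \Z$ and $(b,0) \in P_{\X \cap \Z, \Y \cap \Z}$. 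Finally, $\mul F \subseteq \ran F \subseteq \X$, while the implication $(0,w) \in F \Rightarrow (0,-w) \in I - F$ gives $\mul F \subseteq \ran(I-F) \subseteq \Y$, placing $(0,\, y - a) \in \{0\} \times (\X \cap \Y)$. Therefore $(x,y) \in F_0$, as required.

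The main obstacle is bookkeeping in the maximality step: one must recognise that each of the three hypotheses in \eqref{minc2} plays a distinct role in placing one summand of the decomposition, and in particular that $\mul F \subseteq \X \cap \Y$ requires combining the range condition on $F$ with the residual condition on $I - F$.
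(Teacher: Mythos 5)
Your proposal is correct and follows essentially the same route as the paper: idempotency of $F_0$ comes from the parametrisation of $\Id(\HH)$ (you cite Corollary \ref{IDset}, which packages exactly the paper's separate verifications of sub- and super-idempotency via Corollaries \ref{subcor1} and \ref{supcor1}), and maximality rests on the same three inclusions $\ker(I-F)=\ran F\cap\dom F\subseteq\X\cap\Z$, $\ker F=\ran(I-F)\cap\dom F\subseteq\Y\cap\Z$, $\mul F=\ran F\cap\ran(I-F)\subseteq\X\cap\Y$ from Corollary \ref{kerran}, which the paper applies at the subspace level through $F=P_{\ker(I-F),\ker F}\ \hat{+}\ (\{0\}\times\mul F)\subseteq F_0$ and you simply unpack elementwise. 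Your reading of the misprint in the definition of $F_0$ (the multivalued summand should be $\{0\}\times(\X\cap\Y)$) is the right one.
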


\begin{proof}
 By Lemma \ref{propmul}, $\ran F_0= \X\cap\Z \cap (\Y\cap \Z+ \X\cap \Y) \subseteq \X,$ $\ran(I-F_0)=\Y\cap \Z \cap (\X \cap \Z+\X\cap \Y) \subseteq \Y$ and $\dom F_0 =(\X\cap\Z+\Y\cap \Z)\cap \X\cap\Y\subseteq \Z.$ By Corollary \ref{supcor1}, $F_0$ is  super-idempotent.
 It is easy to check that $F_0=P_{\X,\Y} \cap  ((\Z\cap\X+\Z\cap\Y) \times \HH)$ so that,  by  Corollary \ref{subcor1}, $F_0$ is sub-idempotent. Then the idempotent $F_0$ satisfies \eqref{minc2}. 

Now, if $F \in \Id(\HH)$  satisfies (\ref{minc2}) then, by Corollary \ref{kerran}, $\ran F \cap \dom F \subseteq \mc X \cap \mc Z,$ $\ker F =\ran(I-F)\cap \dom F \subseteq \mc Y \cap \mc Z$ and $\mul F =  \ran F \cap \ran (I-F) \subseteq \mc X \cap \mc Y.$ Then,  by Corollary \ref{corid}, $F=P_{\ran F \cap \dom F, \ker F}  \ \hat{+} \ \ (\{0\}  \times \mul F) \subseteq F_0.$ 
\end{proof}

In what follows we characterize the triplets  for which there is equality in \eqref{minc} or in \eqref{minc2}.

\begin{proposition} \label{theic} \label{subidem}\label{unicidadE}
The following are equivalent:
\begin{enumerate}
\item There exists $E\in \Id(\HH)$ such that $\M=\ker(I-E),$ $\N=\ker E$ and $\St=\mul E;$
\item  $(\M+\N)\cap \St=\M \cap \N$;
\item	$(\M+\N)\cap (\M+\St)=\M$ and $\M\cap \N=\M\cap \St.$ 
\end{enumerate}
In this case, $E=\PMN \ \hat{+} \ (\{0\} \times \St)$ is the unique idempotent satisfying item $i).$ 
\end{proposition}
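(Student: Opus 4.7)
The plan is to prove the equivalence via the cycle $(i)\Rightarrow(ii)\Leftrightarrow(iii)\Rightarrow(i)$, and then derive uniqueness as a direct consequence of Corollary \ref{corid}.

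For $(i)\Rightarrow(ii)$, I would start from $E\in\Id(\HH)$ with $\M=\ker(I-E)$, $\N=\ker E$, $\St=\mul E$. First observe that $\mul(I-E)=\mul E=\St$ (since $(0,v)\in I-E$ iff $(0,-v)\in E$). Then Corollary \ref{kerran} gives $\dom E=\M+\N$, $\ran E=\M+\St$, and $\ran(I-E)=\N+\St$. Since $I-E\in\Id(\HH)$ by Corollary \ref{propidemL}, applying Corollary \ref{kerran}(3) to $I-E$ yields $\M=\ker(I-E)=\ran E\cap\dom E=(\M+\St)\cap(\M+\N)$. Combined with Corollary \ref{corid}(4) ($\mul E\cap\dom E=\ran E\cap\ker E$, i.e., $\St\cap(\M+\N)=(\M+\St)\cap\N$), I get $(\M+\N)\cap\St=(\M+\St)\cap\N\subseteq(\M+\St)\cap(\M+\N)\cap\N=\M\cap\N$, and the reverse inclusion is trivial once one notices that $\M\cap\N\subseteq\St$ (which follows because $(x,x),(x,0)\in E$ force $(0,x)\in E$). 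This establishes (ii).

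The equivalence $(ii)\Leftrightarrow(iii)$ is a short subspace calculation. Assuming (ii), from $\M\cap\N=(\M+\N)\cap\St\subseteq\St$ one gets $\M\cap\N\subseteq\M\cap\St$, while $\M\cap\St\subseteq(\M+\N)\cap\St=\M\cap\N$ gives the reverse; for the first part of (iii), if $x=m+n=m'+s$ then $s=(m-m')+n\in\St\cap(\M+\N)=\M\cap\N\subseteq\M$, so $x\in\M$. Conversely, assuming (iii), any $x\in(\M+\N)\cap\St$ lies in $\M+\St$, hence in $(\M+\N)\cap(\M+\St)=\M$, so $x\in\M\cap\St=\M\cap\N$; and $\M\cap\N=\M\cap\St\subseteq(\M+\N)\cap\St$ is immediate.

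For $(ii)\Rightarrow(i)$ I would set $E:=\PMN\ \hat{+}\ (\{0\}\times\St)$, which is super-idempotent by Corollary \ref{supcor1}. By Lemma \ref{propmul}(2), $\ker E=\N+\M\cap\St$ and $\mul E=\St+\M\cap\N$; the consequences of (ii) just established, namely $\M\cap\St=\M\cap\N\subseteq\St\cap\N$, collapse these to $\ker E=\N$ and $\mul E=\St$. Using Lemma \ref{propmul2}(2), $I-E=P_{\N,\M}\ \hat{+}\ (\{0\}\times\St)$, so Lemma \ref{propmul}(2) gives $\ker(I-E)=\M+\N\cap\St$; and $\N\cap\St\subseteq(\M+\N)\cap\St=\M\cap\N\subseteq\M$ shows $\ker(I-E)=\M$. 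Finally, to confirm $E\in\Id(\HH)$, Proposition \ref{lemaIC}(2) reduces this to checking $\mul E\cap\dom E=\ran E\cap\ker E$, i.e., $\St\cap(\M+\N)=(\M+\St)\cap\N$, and both sides equal $\M\cap\N$ by (ii) combined with $(\M+\St)\cap\N\subseteq(\M+\St)\cap(\M+\N)=\M$ from (iii). For uniqueness, any $E'\in\Id(\HH)$ satisfying (i) must equal $P_{\ker(I-E'),\ker E'}\ \hat{+}\ (\{0\}\times\mul E')=\PMN\ \hat{+}\ (\{0\}\times\St)$ by Corollary \ref{corid}(4).

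The main obstacle is bookkeeping: the proof hinges on recognizing that (ii) secretly encodes both $\M\cap\N\subseteq\St$ and $\M\cap\St\subseteq\N$, since these are exactly what is needed to collapse the ``defect'' terms in $\ker E$, $\mul E$, and $\ker(I-E)$ of the candidate $E=\PMN\ \hat{+}\ (\{0\}\times\St)$ to the prescribed values.
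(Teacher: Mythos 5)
Your proof is correct and follows essentially the same route as the paper: the same subspace computations for $(ii)\Leftrightarrow(iii)$, the same candidate $E=\PMN\ \hat{+}\ (\{0\}\times\St)$ verified via Lemma \ref{propmul} and Proposition \ref{lemaIC}, and Corollary \ref{kerran} for $(i)\Rightarrow(ii)$. The only (harmless) deviation is that you obtain uniqueness directly from the representation $E'=P_{\ker(I-E'),\ker E'}\ \hat{+}\ (\{0\}\times\mul E')$ in Corollary \ref{corid}, whereas the paper invokes the minimality statement of Proposition \ref{min} together with Lemma \ref{lemalr}; your version is, if anything, slightly more direct.
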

\begin{proof} 
$i) \Rightarrow ii)$: Applying Corollary \ref{kerran} to $E$ and $I-E,$ it follows that $(\M+\N)\cap \St=(\ker(I-E)+\ker E)\cap \mul E=\dom E \cap \mul E=\dom E \cap \ran(I-E)\cap \ran E=\ker(I-E) \cap \ker E=\M \cap \N.$

$ii) \Rightarrow iii)$:  Suppose that $(\M+\N)\cap \St=\M \cap \N.$ Then $\M \cap \N \subseteq \St.$ So that $\M \cap \N=\St \cap \M \cap \N:= \mathcal{W}.$ From $\M \cap \St+\N \cap \St \subseteq (\M+\N) \cap \St=\M \cap \N,$ we get $\M \cap \St \subseteq\N$ and $\N \cap \St \subseteq \M.$ Therefore, $\M \cap \St= \N \cap \St=\mc{W}.$ Let $x \in (\M+\N)\cap (\M+\St),$ so that  $x=m+n=m'+s$ with $m,m' \in \M,$ $n\in \N$ and $s\in\St.$ Then
$m+n-m'=s \in (\M +\N) \cap \St=\M \cap \N$ and $x=m'+s \in \M +\M\cap \N=\M.$ The other inclusion always holds. 

$iii) \Rightarrow i)$:  
Define $E:=\PMN \ \hat{+} \ (\{0\} \times \St).$ Then $\dom E=\M+\N$ and $\ran E=\M+\St.$ So that $\ran E\cap\dom E=(\M+\St)\cap(\M+\N)=\M.$ By Lemma \ref{propmul}, $\ker E=\M \cap \St + \N=\M\cap \N+\N=\N$ and $\mul E=\M \cap \N+ \St=\M \cap \St+ \St=\St.$ Then, by Corollary \ref{coridsup}, $E \in \Id(\HH).$
Finally, if $E_1$ satisfies $(i)$ then, by Proposition \ref{min}, $E\subseteq E_1.$ Since $\dom E=\M+\N=\dom E_1$ and $\mul E=\St=\mul E_1,$ by Lemma \ref{lemalr}, $E=E_1$.
\end{proof}

If  $(\M+\N)\cap \St=\M \cap \N$, it is easy to check that any triplet having $\M,\N$ and $\St$ as components satisfies the corresponding equality (see Corollary \ref{coric}). 

\begin{proposition} \label{theic2} \label{subidem2}\label{unicidadE2}
Let $\X, \Y,\Z$ be subspaces of $\HH.$ The following are equivalent:
\begin{enumerate}
\item There exists $F\in \Id(\HH)$ such that $\X=\ran F,$ $\Y=\ran (I-F)$ and $\Z=\dom F;$
\item $\X\cap \Y+\Z=\X +\Y;$
\item  $\X=\X\cap \Y +\X \cap \Z$ and $\X+\Y=\X+\Z.$ 
\end{enumerate}
In this case, $F=\PXY \cap (\Z \times \HH)$ is the unique idempotent satisfying item $i).$ 
\end{proposition}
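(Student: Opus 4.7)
The plan is to mirror the strategy of Proposition \ref{theic}, but since the triplet $(\X, \Y, \Z)$ now prescribes ranges rather than kernels, the roles of Propositions \ref{sub} and \ref{supra} are swapped: the candidate idempotent is the sub-idempotent expression $F := \PXY \cap (\Z \times \HH)$ rather than the super-idempotent one, and identities for domain/kernel are traded for identities involving range/multivalued-part throughout.

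For $i) \Rightarrow ii)$, I would apply Corollary \ref{kerran} to both $F$ and $I-F$ (itself idempotent by Corollary \ref{propidemL}), obtaining
\[
\X = \ker(I-F) + \mul F, \quad \Y = \ker F + \mul(I-F), \quad \mul F = \mul(I-F) = \X \cap \Y,
\]
together with $\Z = \dom F = \ker(I-F) + \ker F$. Adding the first two identities and collecting the common summand $\X \cap \Y$ yields $\X + \Y = \Z + \X \cap \Y$.

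For $ii) \Rightarrow iii)$, assume $\X + \Y = \X \cap \Y + \Z$. For any $x \in \X \subseteq \X + \Y$, write $x = a + z$ with $a \in \X \cap \Y$ and $z \in \Z$; since $a \in \X$, also $z = x - a \in \X \cap \Z$, so $\X \subseteq \X \cap \Y + \X \cap \Z$ (the reverse inclusion is trivial). The equality $\X + \Y = \X + \Z$ then follows from $\X \cap \Y \subseteq \X$ combined with $\Z \subseteq \X \cap \Y + \Z = \X + \Y$.

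For $iii) \Rightarrow i)$, set $F := \PXY \cap (\Z \times \HH)$; by Corollary \ref{subcor1}, $F$ is sub-idempotent. Lemma \ref{propmul} applied to $F$, and Lemma \ref{propmul2} applied to $I-F$ followed by Lemma \ref{propmul} again, combined with $\X + \Y = \X + \Z$ and $\X \subseteq \Y + \Z$ (the latter from $\X = \X \cap \Y + \X \cap \Z$), directly yield $\dom F = \Z$, $\ran F = \X$, $\ran(I-F) = \Y$, $\ker F = \Y \cap \Z$, and $\mul F = \X \cap \Y$. The main obstacle is promoting $F$ from sub-idempotent to idempotent; by Proposition \ref{lemaIC}(1) this reduces to verifying $\Z = \X \cap \Z + \Y \cap \Z$. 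The key argument is: given $z \in \Z \subseteq \X + \Z = \X + \Y$, write $z = x + y$ with $x \in \X$, $y \in \Y$, and further $x = a + b$ with $a \in \X \cap \Y$, $b \in \X \cap \Z$ via iii); then $z - b = a + y$ lies in $\Y$ and also in $\Z$, giving $z = b + (z - b) \in \X \cap \Z + \Y \cap \Z$. Uniqueness is then immediate from Corollary \ref{corid}: any idempotent $F_1$ satisfying i) equals $P_{\ran F_1, \ran(I-F_1)} \cap (\dom F_1 \times \HH) = \PXY \cap (\Z \times \HH) = F$.
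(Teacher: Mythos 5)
Your proof is correct and follows essentially the same route as the paper: the same use of Corollary \ref{kerran} for $i)\Rightarrow ii)$, the same modular-law manipulations for $ii)\Rightarrow iii)$, and the same construction $F=\PXY\cap(\Z\times\HH)$ promoted from sub-idempotent to idempotent via the domain decomposition $\Z=\X\cap\Z+\Y\cap\Z$ and Proposition \ref{lemaIC}. Your uniqueness argument (directly from the representation $F_1=P_{\ran F_1,\ran(I-F_1)}\cap(\dom F_1\times\HH)$ of Corollary \ref{corid}) is in fact slightly more direct than the paper's appeal to the extremality proposition, but the substance is the same.
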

\begin{proof} $i) \Rightarrow ii)$: By Corollary \ref{kerran}, $\X \cap \Y+\Z=\ran F \cap \ran(I-F)+\dom F=\mul F+ \dom F=\mul F+ \ran F \cap \dom F+ \ker F=\ran F +\ker F=\X+\Y.$
	
	$ii) \Rightarrow iii)$: $\X+\Y=\X\cap \Y+\Z \subseteq \X + \Z \subseteq \X+\Y$ because $\Z \subseteq \X+\Y.$ Then $\X+\Y=\X+\Z.$
	Since $\X \subseteq \X\cap \Y+\Z$ then $\X \subseteq (\X\cap \Y+\Z)\cap \X=\X\cap \Y+\X\cap \Z.$ The other inclusion always holds.
	
	$iii) \Rightarrow i)$:  Define $F:=\PXY \cap (\Z \times \HH).$ Since $\Z \subseteq \X+\Y,$ $\Y \subseteq \X+\Z,$ and $\X \subseteq \Y+\Z,$ it follows that $\dom F=(\X+\Y)\cap \Z=\Z,$ $\ran F=\X\cap (\Y+\Z)=\X$ and $\ran(I-F)=\Y \cap (\X+\Z)=\Y.$ Also, $\Z \subseteq \X+\Y = \X \cap \Y +\X \cap \Z+\Y=\X\cap \Z+\Y.$ Then $\Z=\X\cap \Z+\Y\cap \Z,$ so that $\dom F= \ran F \cap \dom F+\ker F.$ 
	Therefore, by Corollary \ref{coridsub}, $F \in \Id(\HH).$
	
	Finally, if $F_1$ satisfies $(i)$ then, by Proposition \ref{min}, $F_1 \subseteq F.$ Since $\dom F=\Z=\dom F_1$ and, by Corollary \ref{kerran}, $\mul F=\X \cap \Y=\mul F_1,$ then $F=F_1$.
\end{proof}

It is easy to see that item $iii)$ of Proposition \ref{theic} is equivalent to
\begin{align*}
\M&=(\M+\N)\cap (\M+\St),\\
\N&=(\N+\M)\cap (\N+\St),\\
\St&=(\St+\M)\cap (\St+\N).
\end{align*}
In a symmetric fashion item $iii)$ of Proposition \ref{theic2} is equivalent to
\begin{align*}
\X&=\X\cap \Y +\X \cap \Z, \\ 
\Y&=\Y\cap \X +\Y \cap \Z\\
\Z&=\Z\cap \X +\Z \cap \Y.
\end{align*}

In view of Propositions \ref{subidem} and \ref{theic2}, the set of idempotent linear relations can  be given in terms of subspaces.

\begin{corollary} \label{IP1} The set of idempotent linear relations can be expressed as
$$\{\PMN  \hat{+} (\{0\} \times \St) :  (\M+\N)\cap \St=\M\cap \N\}.$$ Alternatively;
$$\{\PXY \cap (\Z \times \HH) : \X,\Y, \Z \subseteq \HH \text{ subspaces, }  \X \cap \Y+\Z=\X+\Y\}.$$
\end{corollary}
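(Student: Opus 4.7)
The statement follows by combining Propositions \ref{subidem} and \ref{theic2}, so the plan is essentially to verify each set equality by citing those characterizations in both directions.

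For the first description, the plan is to show that the set on the right is contained in $\Id(\HH)$ and vice versa. Given subspaces $\M,\N,\St$ with $(\M+\N)\cap\St=\M\cap\N$, Proposition \ref{subidem} (implication $ii)\Rightarrow i)$) yields directly that $E:=\PMN\hat{+}(\{0\}\times\St)$ is idempotent, establishing one inclusion. Conversely, given $E\in\Id(\HH)$, set $\M:=\ker(I-E)$, $\N:=\ker E$, $\St:=\mul E$; Proposition \ref{subidem} (implication $i)\Rightarrow ii)$) says that $(\M+\N)\cap\St=\M\cap\N$, and the uniqueness clause in that proposition gives the explicit formula $E=\PMN\hat{+}(\{0\}\times\St)$, placing $E$ in the set.

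The second description is handled in exactly the same way, now using Proposition \ref{theic2} in place of Proposition \ref{subidem}: given $\X,\Y,\Z$ with $\X\cap\Y+\Z=\X+\Y$, the implication $ii)\Rightarrow i)$ yields $F:=\PXY\cap(\Z\times\HH)\in\Id(\HH)$; conversely, given $F\in\Id(\HH)$, take $\X:=\ran F$, $\Y:=\ran(I-F)$, $\Z:=\dom F$, apply $i)\Rightarrow ii)$ to obtain the required identity, and invoke the uniqueness clause to obtain the explicit form of $F$.

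There is no real obstacle here: both set equalities are immediate repackagings of Propositions \ref{subidem} and \ref{theic2}. The only thing to be careful about is to cite the \emph{uniqueness} parts of those propositions in the forward direction (starting from a given $E$ or $F$), so that the idempotent is actually recovered as $\PMN\hat{+}(\{0\}\times\St)$, respectively $\PXY\cap(\Z\times\HH)$, rather than merely being dominated by such a relation.
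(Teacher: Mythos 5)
Your proposal is correct and follows essentially the same route as the paper, which states this corollary without a written proof as an immediate consequence of Propositions \ref{subidem} and \ref{theic2}; your explicit appeal to the uniqueness clauses in the forward direction is exactly the right way to recover $E$ as $\PMN\ \hat{+}\ (\{0\}\times\St)$ (equivalently, one could cite Corollary \ref{corid}).
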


\begin{proposition} \label{the IC equiv}  If
\begin{equation} \label{ic1}
(\M+\N)\cap \St=\M \cap \N
\end{equation}
then $$\X:=\M+\St, \ \Y:=\N+\St \mbox{ and } \Z:=\M+\N$$ satisfy 
\begin{equation} \label{ic2}
\X\cap \Y+\Z=\X +\Y.
\end{equation}
Conversely, if the subspaces  $\X, \Y,\Z$ satisfy \eqref{ic2} then $$\M:=\X \cap \Z, \ \N:=\Y \cap \Z \mbox{ and } \St:=\X\cap \Y$$ satisfy \eqref{ic1}.
\end{proposition}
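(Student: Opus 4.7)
The plan is to verify both implications by direct lattice computations on subspaces; in fact, I expect each implication to hold without actually invoking its stated hypothesis, so both parts reduce to routine manipulations of sums and intersections.

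For the first implication, setting $\X=\M+\St$, $\Y=\N+\St$ and $\Z=\M+\N$, I would first expand $\X+\Y = \M+\N+\St = \Z+\St$ (using $\St+\St=\St$). Since $\St\subseteq \M+\St=\X$ and $\St\subseteq \N+\St=\Y$, one has $\St\subseteq \X\cap\Y$, hence $\X+\Y = \Z+\St \subseteq \Z + \X\cap\Y$. The reverse inclusion is immediate from $\X\cap\Y \subseteq \X+\Y$ and $\Z = \M+\N \subseteq \X+\Y$, which together yield \eqref{ic2}.

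For the converse, with $\M=\X\cap\Z$, $\N=\Y\cap\Z$ and $\St=\X\cap\Y$, I would first note the equality $\M\cap\N = \X\cap\Y\cap\Z$. Since $\M,\N\subseteq\Z$, every element of $\M+\N$ lies in $\Z$, so $(\M+\N)\cap\St \subseteq \Z \cap (\X\cap\Y) = \X\cap\Y\cap\Z = \M\cap\N$. The reverse inclusion is trivial: $\M\cap\N \subseteq \X\cap\Y=\St$ and $\M\cap\N \subseteq \M \subseteq \M+\N$, giving \eqref{ic1}.

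The main obstacle here is conceptual rather than technical: one might expect the conditions \eqref{ic1} and \eqref{ic2} to play an active role in the respective implications, but as the outline above shows, they are not needed for the pointwise statements. They become essential only when one wishes to promote the two constructions to mutually inverse maps between the class of triples satisfying \eqref{ic1} and the class of triples satisfying \eqref{ic2}; at that stage one needs round-trip identities such as $(\M+\St)\cap(\M+\N)=\M$ under \eqref{ic1}, which is precisely the content of condition $iii)$ in Propositions \ref{theic} and \ref{theic2}.
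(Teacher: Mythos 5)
Your proof is correct, but it takes a genuinely different route from the paper's. The paper deduces the forward implication from Proposition \ref{theic}: under \eqref{ic1} one has $\X\cap\Y=(\M+\St)\cap(\N+\St)=\St$, whence $\X\cap\Y+\Z=\St+\M+\N=\X+\Y$; and the converse from Proposition \ref{theic2}: under \eqref{ic2} one has $\X\cap\Z+\Y\cap\Z=\Z$, whence $(\M+\N)\cap\St=\Z\cap\X\cap\Y=\M\cap\N$. So the paper's argument genuinely consumes the hypotheses, but only to upgrade the inclusions $\St\subseteq\X\cap\Y$ and $\M+\N\subseteq\Z$ to equalities --- and, as your direct computation shows, the inclusions alone already suffice for the two displayed identities. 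Your argument is therefore more elementary (it bypasses Propositions \ref{theic} and \ref{theic2} entirely) and establishes a formally stronger fact, namely that both implications hold for arbitrary triplets of subspaces. Your closing remark correctly identifies where the IC hypotheses do become essential: in the round-trip identities $(\M+\St)\cap(\M+\N)=\M$, $\X\cap\Y=\St$, $\X\cap\Z+\Y\cap\Z=\Z$, etc., which are what make the two constructions mutually inverse and hence let the proposition serve its purpose in the paper of translating between the triplet $(\ker(I-E),\ker E,\mul E)$ and the triplet $(\ran E,\ran(I-E),\dom E)$. If you wanted your version to fully replace the paper's, it would be worth recording those round-trip identities explicitly, since the subsequent identity $P_{\M,\N,\St}=P_{\M+\St,\N+\St}\cap((\M+\N)\times\HH)$ implicitly relies on them.
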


\begin{proof} By Proposition \ref{theic}, $\X \cap \Y=(\M+\St)\cap (\N+\St)=\St.$ Therefore $\X,\Y, \Z$ satisfiy \eqref{ic2}.
Conversely, by Proposition \ref{theic2}, $\M+\N=\X \cap \Z+\Y \cap \Z=\Z.$ Therefore $\M,\N, \St$ satisfy \eqref{ic1}.
\end{proof}

Summarizing, by Propositions \ref{theic} and \ref{theic2}, we get that $E\in \Id(\HH)$ is characterized by any of the following triplets:  $$\ker(I-E), \ker E  \mbox{ and } \mul E$$ or  $$\ran E, \ran(I-E) \mbox{ and } \dom E,$$
and Proposition \ref{the IC equiv} shows how to get one triplet from the other. Any of these triplets provides a unique representation of an $E\in \Id(\HH);$ namely, $$E=P_{\ker(I-E),\ker E} \ \hat{+} \ (\{0\} \times \mul E)$$ or $$E=P_{\ran E, \ran(I-E)} \cap (\dom E \times \HH).$$  The first representation of $E\in\Id(\HH)$ resembles the representation (\ref{semiproy-rep}) of semi-projections. So, from now on, we use this representation rather than the second one. 
 
\begin{definition} The subspaces $\M, \N,\St$ satisfy the \emph{idempotency condition (IC)} if $$(\M+\N)\cap \St=\M \cap \N.$$
\end{definition}

\begin{example} \label{exampleIC}
	\begin{enumerate}
\item [1.] $\M,\N,\M\cap\N$ satisfy the IC.
\item [2.] $(\M+\N)\cap (\M+\St),$ $(\N+\M)\cap(\N+\St)$ and $(\St + \M)\cap(\St+\N)$ are the ``minimal'' subspaces of those with the IC containing $\M,\N$ and $\St,$ respectively (see  Proposition  \ref{min}).
\item [3.] $\N \cap\St,\M\cap \St$ and $\M\cap\N$ are the ``maximal'' subspaces of those with the IC contained in $\M,\N$ and $\St,$ respectively (see Proposition  \ref{max}).
\item [4.] If $(\M\dot{+}\N)\cap \St=\{0\}$ then $\M,\N$ and $\St$ satisfy the IC.
\item [5.]  If $\ol{\M+\N}\cap \ol{\St}=\ol{\M}\cap \ol{\N}$ then $\ol{\M},\ol{\N}$ and $\ol{\St}$ satisfy the IC. 
\item[6.] If $T \in \lr(\HH)$ then $\ran T \cap \dom T,$ $\ker T$ and $\mul T$ satisfy the IC if and only if $\mul T \cap \dom T=  \ran T \cap \ker T.$ 
	\end{enumerate}
\end{example}

\bigskip
In the sequel, given $\M, \N,$ $\St$ subspaces of $\HH$  satisfying the IC, we write
$$P_{\M,\N,\St}:=\PMN  \ \hat{+} \ (\{0\} \times \St).$$
 In other words, $P_{\M,\N,\St}$ is the unique idempotent with $\ker(I-P_{\M,\N,\St})=\M$, $\ker P_{\M,\N,\St}=\N$ and $\mul P_{\M,\N,\St}=\St.$ We
emphasize here that throughout this paper, the notation $P_{\M,\N,\St}$ is used for $\PMN  \ \hat{+} \ (\{0\} \times \St)$  only when $\M, \N,$ $\St$ satisfy the IC. 
By Proposition \ref{the IC equiv}, 
$$P_{\M,\N,\St}=P_{\M+\St, \N+\St}  \cap ((\M+\N) \times \HH).$$

In particular, $P_{\M,\N}=P_{\M,\N,\M\cap \N}.$ 

\begin{corollary} \label{coric} Let $\M, \N,\St$ be subspaces of $\HH$ satisfying the IC. Then 
$P_{\M,\N,\St}^{-1}=P_{\M,\St,\N}$ and $I-P_{\M,\N,\St}=P_{\N,\M,\St}.$ 
\end{corollary}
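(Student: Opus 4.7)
The plan is to recognize that the relation $P_{\M,\N,\St}=\PMN\ \hat{+}\ (\{0\}\times\St)$ is precisely the operator $T$ studied in Lemma \ref{propmul2}, so both $P_{\M,\N,\St}^{-1}$ and $I-P_{\M,\N,\St}$ have already been computed there. The only work remaining is to verify that the shuffled triplets $(\M,\St,\N)$ and $(\N,\M,\St)$ also satisfy the idempotency condition, so that the notation $P_{\M,\St,\N}$ and $P_{\N,\M,\St}$ on the right-hand sides is even meaningful.

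First I would handle the trivial case: the IC for $(\N,\M,\St),$ namely $(\N+\M)\cap\St=\N\cap\M,$ is literally the same statement as the IC for $(\M,\N,\St),$ by commutativity of sum and intersection. For the triplet $(\M,\St,\N)$ the symmetry is less apparent, and this is the step I expect to require the most care. I would invoke Proposition \ref{theic}, whose item iii) rewrites the IC in the equivalent form $(\M+\N)\cap(\M+\St)=\M$ together with $\M\cap\N=\M\cap\St.$ This formulation is visibly invariant under interchanging $\N$ and $\St,$ so it also expresses the IC for $(\M,\St,\N);$ that is, $(\M+\St)\cap\N=\M\cap\St.$

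Once both triplets are admissible, the two identities follow immediately from Lemma \ref{propmul2} item 2. Explicitly,
\[
P_{\M,\N,\St}^{-1}=\bigl(\PMN\ \hat{+}\ (\{0\}\times\St)\bigr)^{-1}=P_{\M,\St}\ \hat{+}\ (\{0\}\times\N)=P_{\M,\St,\N},
\]
and
\[
I-P_{\M,\N,\St}=I-\bigl(\PMN\ \hat{+}\ (\{0\}\times\St)\bigr)=P_{\N,\M}\ \hat{+}\ (\{0\}\times\St)=P_{\N,\M,\St}.
\]
The main (and essentially only) obstacle is the symmetry verification for the IC; once Proposition \ref{theic} iii) is brought to bear, the corollary is a one-line consequence of the formulas in Lemma \ref{propmul2}.
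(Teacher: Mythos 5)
Your proposal is correct and matches the argument the paper intends (the corollary is stated without proof, but the preceding remark that ``any triplet having $\M,\N$ and $\St$ as components satisfies the corresponding equality'' together with the symmetric reformulation of item $iii)$ of Proposition \ref{theic} is exactly your IC-symmetry step, and the formulas themselves are read off from Lemma \ref{propmul2}). Your verification that $(\M,\St,\N)$ satisfies the IC via the $\N\leftrightarrow\St$ invariance of Proposition \ref{theic} $iii)$ is precisely the point that needed care, and you handled it correctly.
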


\section{The closure and adjoint}

This section is devoted to study the closure and the adjoint of sub-, super- and idempotent relations. The adjoint and the closure of semi-projections are again semi-projections. The following formulae for $E^*$ and $\ol{E}$ where proved in \cite{Cross} and \cite{Labrousse}.

\begin{proposition} \label{L2} If $E=P_{\M, \N}$ then  $$E^*=P_{\N^{\perp},\M^{\perp}} \mbox{ and } \ol{E}=P_{\ol{\M}, \ol{\N}}.$$ Moreover, $E$ is closed if and only if $\M$ and $\N$ are closed.
\end{proposition}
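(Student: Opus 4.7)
My plan is to establish the formula for $E^{*}$ directly from the definition, derive the formula for $\overline{E}$ from it using $\overline{E}=E^{**}$, and then read off closedness from the uniqueness of semi-projections (Proposition \ref{L1}).

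First I would compute $E^{*}$ exploiting the decomposition $E=I_{\M}\ \hat{+}\ (\N\times\{0\})$ together with identity \eqref{sumadj}, giving
$$E^{*}= I_{\M}^{*}\ \cap\ (\N\times\{0\})^{*}.$$
Each factor can be read off straight from the characterization
$$T^{*}=\{(x,y):\PI{g}{x}=\PI{f}{y}\text{ for all }(f,g)\in T\}$$
provided in the preliminaries. For $I_{\M}$ the pair $(f,g)$ is $(u,u)$ with $u\in\M$, so $(x,y)\in I_{\M}^{*}$ iff $x-y\in\M^{\perp}$. For $\N\times\{0\}$ the pair is $(f,0)$ with $f\in\N$, giving $y\in\N^{\perp}$. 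Intersecting these two conditions, a pair $(x,y)$ belongs to $E^{*}$ iff $y\in\N^{\perp}$ and $x=y+m^{\perp}$ for some $m^{\perp}\in\M^{\perp}$, i.e.\ $(x,y)\in I_{\N^{\perp}}\ \hat{+}\ (\M^{\perp}\times\{0\})=P_{\N^{\perp},\M^{\perp}}$. (Alternatively one can just expand $\PI{m}{x}=\PI{m+n}{y}$ for all $m\in\M,n\in\N$ and separate the $m=0$ and $n=0$ parts.)

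Next, for the closure I would apply the adjoint formula once more: since $\overline{E}=E^{**}$, the already-proved identity gives
$$\overline{E}=(P_{\N^{\perp},\M^{\perp}})^{*}=P_{(\M^{\perp})^{\perp},(\N^{\perp})^{\perp}}=P_{\overline{\M},\overline{\N}}.$$
Finally, the equivalence $E$ closed $\Longleftrightarrow$ $\M$ and $\N$ are closed follows because $E$ is closed iff $E=\overline{E}$, i.e.\ $P_{\M,\N}=P_{\overline{\M},\overline{\N}}$; Proposition \ref{L1} says a semi-projection is uniquely determined by its range and kernel, so this equality forces $\M=\overline{\M}$ and $\N=\overline{\N}$, and the converse is immediate.

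There is no real obstacle here; the only point requiring a bit of care is making sure the computation of $I_{\M}^{*}$ is interpreted correctly as a linear relation with $\dom=\HH$ and $\mul=\M^{\perp}$, so that the intersection with $\HH\times\N^{\perp}$ does yield precisely the semi-projection $P_{\N^{\perp},\M^{\perp}}$ and not a proper sub-relation.
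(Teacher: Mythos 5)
Your argument is correct, but it cannot be compared line-by-line with the paper's, because the paper does not prove this proposition at all: it simply attributes the formulae to Cross--Wilcox and Labrousse and moves on. What you supply is therefore a self-contained proof of a cited result, and it holds up. The key computation is right: writing $E=I_{\M}\ \hat{+}\ (\N\times\{0\})$ and using \eqref{sumadj} gives $E^{*}=I_{\M}^{*}\cap(\N\times\{0\})^{*}$, and the two adjoints are exactly $\{(x,y):x-y\in\M^{\perp}\}$ and $\HH\times\N^{\perp}$, whose intersection is $I_{\N^{\perp}}\ \hat{+}\ (\M^{\perp}\times\{0\})=P_{\N^{\perp},\M^{\perp}}$. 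Your derivation of $\ol{E}=E^{**}=P_{\ol{\M},\ol{\N}}$ by applying the adjoint formula twice is clean, and the closedness equivalence via Proposition \ref{L1} (uniqueness of a semi-projection given its range $\M$ and kernel $\N$) is exactly the right way to finish: $P_{\M,\N}=P_{\ol{\M},\ol{\N}}$ forces $\M=\ol{\M}$ and $\N=\ol{\N}$, and the converse is immediate. Your closing caveat about interpreting $I_{\M}^{*}$ as the relation with $\dom I_{\M}^{*}=\HH$ and $\mul I_{\M}^{*}=\M^{\perp}$ is well placed, since that is where a careless reading could lose part of the relation. No gaps.
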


\begin{example} Let $\M$ and $\N$ be subspaces of $\HH$ such that $\M \cap \N=\{0\}$ but $\ol{\M}\cap \ol{\N}\not = \{0\}$ and $\M^{\perp} \cap \N^{\perp}\not =\{0\}.$ Let $E$ be the projection onto $\M$ with kernel $\N.$ Then, by Proposition \ref{L2}, $E^*$ and $\ol{E}$ are both semi-projections but they are not projections.
\end{example}

We begin by studying the closure and the adjoint of sub- and super-idempotents.

\begin{lemma} \label{Tclosed} Let $\M, \N, \St$ be closed subspaces of $\HH.$  Then
	\begin{enumerate}
		\item[1.] $\PMN \cap  (\St \times \HH)$ is closed.
		\item[2.] $P_{\M,\N} \ \hat{+} \ (\{0\} \times \St)$ is closed  if and only if $\M^{\perp}+\N^{\perp}+\St^{\perp}$ is closed.
	\end{enumerate} 
\end{lemma}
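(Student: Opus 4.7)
The plan is to handle the two parts separately. Part 1 is immediate from Proposition \ref{L2} together with the obvious observation that $\St \times \HH$ is a closed subspace of $\HH \times \HH$ whenever $\St$ is closed: by Proposition \ref{L2}, $P_{\M,\N}$ is closed since $\M$ and $\N$ are closed, and an intersection of two closed subspaces is closed.

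For Part 2, the natural tool is Lemma \ref{sumclosed}, which reduces the closedness of $T := P_{\M,\N} \ \hat{+} \ (\{0\} \times \St)$ to the closedness of the ``adjoint sum'' $P_{\M,\N}^* \ \hat{+} \ (\{0\} \times \St)^*$. I would first compute the two adjoints separately. Proposition \ref{L2} supplies $P_{\M,\N}^* = P_{\N^{\perp}, \M^{\perp}}$. For the second, a direct calculation from the defining condition $\langle g, x\rangle = \langle f, y\rangle$ applied to pairs $(f,g)=(0,s)$ with $s \in \St$ yields $(\{0\} \times \St)^* = \St^{\perp} \times \HH$ (note the orientation: the constraint falls on the first coordinate).

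Substituting these in, I claim
\[
P_{\N^{\perp}, \M^{\perp}} \ \hat{+} \ (\St^{\perp} \times \HH) = (\M^{\perp} + \N^{\perp} + \St^{\perp}) \times \HH.
\]
The inclusion $\subseteq$ is by direct expansion using $P_{\N^\perp,\M^\perp} = \{(n+m, n) : n \in \N^\perp, m \in \M^\perp\}$; conversely, the second coordinate of $\St^\perp \times \HH$ ranges freely over $\HH$ and thus absorbs the $n$ component, giving $\supseteq$. The product space on the right is closed in $\HH \times \HH$ if and only if its first factor $\M^{\perp} + \N^{\perp} + \St^{\perp}$ is closed in $\HH$, which finishes the equivalence via Lemma \ref{sumclosed}.

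The main subtlety is the bookkeeping in the adjoint computation for $(\{0\} \times \St)^*$ --- getting $\St^\perp \times \HH$ rather than $\HH \times \St^\perp$ is essential for the algebraic simplification above to work. Once that is in place, the remainder is routine manipulation of operator ranges.
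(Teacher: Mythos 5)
Your proposal is correct and follows essentially the same route as the paper: part 1 via Proposition \ref{L2} and closedness of $\St\times\HH$, and part 2 via Lemma \ref{sumclosed} together with the identity $P_{\N^{\perp},\M^{\perp}} \ \hat{+} \ (\St^{\perp}\times\HH)=(\M^{\perp}+\N^{\perp}+\St^{\perp})\times\HH$. The adjoint computation $(\{0\}\times\St)^*=\St^{\perp}\times\HH$ and the absorption argument are exactly the "easy to check" step the paper leaves implicit.
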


\begin{proof}  By Proposition \ref{L2}, $P_{\M,\N}$ is closed. Since $\St \times \HH$ is closed, item $1$ follows. 

By Lemma \ref{sumclosed}, $P_{\M,\N} \ \hat{+} \ (\{0\} \times \St)$ is closed if and only if $$P_{\M,\N}^* \ \hat{+} \ (\{0\} \times \St)^*=P_{\N^{\perp}, \M^{\perp}} \ \hat{+} \ (\St^{\perp} \times \HH)$$ is closed. But it is easy to check that
$$P_{\N^{\perp}, \M^{\perp}} \ \hat{+} \ (\St^{\perp} \times \HH)=(\M^{\perp}+\N^{\perp}+\St^{\perp}) \times \HH.$$
The latter is closed if and only if $\M^{\perp}+\N^{\perp}+\St^{\perp}$ is closed.
\end{proof}

\begin{corollary} \label{Tclosed2} $$\ol{P_{\M,\N} \ \hat{+} \ (\{0\} \times \St)}=P_{\ol{\M},\ol{\N}} \ \hat{+} \ (\{0\} \times \ol{\St})$$ if and only if $\M^{\perp}+\N^{\perp}+\St^{\perp}$ is closed.
\end{corollary}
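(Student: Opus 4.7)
Set $T := P_{\M,\N} \hat{+} (\{0\} \times \St)$ and $T' := P_{\overline{\M},\overline{\N}} \hat{+} (\{0\} \times \overline{\St})$. The plan is to reduce the statement to the closedness criterion already established in Lemma \ref{Tclosed}, by first showing that $\overline{T} = \overline{T'}$ unconditionally.

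The key observation is that $T' \subseteq \overline{T}$ always holds. Indeed, take an arbitrary $(u,v) \in T'$. By definition, $(u,v) = (m+n, m+s)$ with $m \in \overline{\M}$, $n \in \overline{\N}$, $s \in \overline{\St}$. Picking sequences $m_k \to m$, $n_k \to n$, $s_k \to s$ with $m_k \in \M$, $n_k \in \N$, $s_k \in \St$, the pairs $(m_k+n_k, m_k+s_k) = (m_k+n_k, m_k) + (0, s_k)$ all lie in $P_{\M,\N} \hat{+}(\{0\} \times \St) = T$ and converge to $(u,v)$. Combined with the trivial inclusion $T \subseteq T'$, this yields $\overline{T} \subseteq \overline{T'} \subseteq \overline{T}$, so $\overline{T} = \overline{T'}$.

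With this identity in hand, the equality $\overline{T} = T'$ is equivalent to $T' = \overline{T'}$, i.e., to $T'$ being closed. Now apply Lemma \ref{Tclosed} (item 2) to the closed subspaces $\overline{\M}, \overline{\N}, \overline{\St}$: $T'$ is closed if and only if $\overline{\M}^{\perp} + \overline{\N}^{\perp} + \overline{\St}^{\perp}$ is closed. Since $\overline{\M}^{\perp} = \M^{\perp}$ and similarly for $\N$ and $\St$, this sum coincides with $\M^{\perp} + \N^{\perp} + \St^{\perp}$, giving both implications at once.

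There is essentially no main obstacle here: once the unconditional identity $\overline{T} = \overline{T'}$ is established by the elementary approximation argument, the corollary is an immediate consequence of Lemma \ref{Tclosed}. The only point requiring a little care is checking that the approximating pairs genuinely lie in $T$ (which follows because the sum $P_{\M,\N} \hat{+} (\{0\}\times\St)$ is defined componentwise and $\M, \N, \St$ are linear subspaces, hence contain their respective approximants).
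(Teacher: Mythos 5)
Your proof is correct and takes the route the paper intends: the corollary is stated without a written proof as an immediate consequence of Lemma \ref{Tclosed}, and your reduction---establishing $\ol{T}=\ol{T'}$ unconditionally via the elementary approximation argument and then applying item 2 of that lemma to $\ol{\M},\ol{\N},\ol{\St}$ together with $\ol{\M}^{\perp}=\M^{\perp}$---is exactly the bookkeeping being left to the reader.
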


\begin{lemma} \label{LemaOR} Let $\M, \N, \St$ be operator ranges in $\HH$ such that $\M+\N+\St$ is closed. Then
$$\ol{\PMN \cap  (\St \times \HH)}=P_{\ol{\M},\ol{\N}} \cap (\ol{\St}\times \HH).$$
\end{lemma}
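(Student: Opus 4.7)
The plan is to apply Proposition~\ref{rangeop}(1) (Labrousse's closure--intersection formula for operator ranges) to the subspaces $U:=P_{\M,\N}$ and $V:=\St\times\HH$ of the Hilbert space $\HH\times\HH$. Since $R=U\cap V$, once the hypotheses of that proposition are verified we immediately get $\ol{R}=\ol{U}\cap\ol{V}$; Proposition~\ref{L2} will then identify $\ol{U}=P_{\ol{\M},\ol{\N}}$, while $\ol{V}=\ol{\St}\times\HH$ is clear, giving the claimed equality.

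First I would check that $U$ and $V$ are operator ranges in $\HH\times\HH$. For $V=\St\times\HH$ this is immediate: if $\St=\ran C$ for some bounded $C$, then $V$ is the range of $(k,h)\mapsto(Ck,h)$. For $U$, note that $P_{\M,\N}=I_\M\ \hat{+}\ (\N\times\{0\})$ is a sum of two obvious operator ranges---parametrized via operators whose ranges are $\M$ and $\N$, as $k\mapsto(Bk,Bk)$ and $k\mapsto(Dk,0)$---and sums of operator ranges are operator ranges, so $U$ is one as well.

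The step I expect to carry the weight is verifying that $U\ \hat{+}\ V$ is closed. Unfolding, a generic element of $U\ \hat{+}\ V$ has the form $(m,m)+(n,0)+(s,h)=(m+n+s,\,m+h)$ with $m\in\M$, $n\in\N$, $s\in\St$, $h\in\HH$; since $h$ runs freely through $\HH$, the second coordinate is arbitrary while the first ranges over $\M+\N+\St$. Hence $U\ \hat{+}\ V=(\M+\N+\St)\times\HH$, which is closed exactly because $\M+\N+\St$ is closed by hypothesis. With this in place, Proposition~\ref{rangeop}(1) applies, and the result follows on invoking Proposition~\ref{L2} to recognize $\ol{P_{\M,\N}}=P_{\ol{\M},\ol{\N}}$.
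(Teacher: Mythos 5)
Your proposal is correct and follows essentially the same route as the paper: both apply Proposition~\ref{rangeop}(1) to $P_{\M,\N}$ and $\St\times\HH$ after observing that their subspace sum equals $(\M+\N+\St)\times\HH$, which is closed by hypothesis, and then invoke Proposition~\ref{L2}. The only difference is that you spell out why the two subspaces are operator ranges, a point the paper delegates to a citation.
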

\begin{proof} Consider $T_1=\PMN $ and $T_2=\St \times \HH.$ Then $T_1$ and $T_2$ are operator ranges (see \cite{Filmore}) and $T_1 \hat{+}T_2=\PMN \ \hat{+} \  (\St \times \HH)=(\M+\N+\St) \times \HH$ is closed because $\M+\N+\St$ is closed. Then, applying Propositions \ref{rangeop} and  \ref{L2}, $\ol{\PMN \cap  (\St \times \HH)}=\ol{\PMN} \cap \ol{\St\times \HH}=P_{\ol{\M},\ol{\N}} \cap (\ol{\St}\times \HH).$
\end{proof}

 In the next result we characterize the super-idempotents that are closed and, in particular, the closed idempotent relations.

\begin{proposition}\label{Ecerrado}  Let $E$ be super-idempotent and set $\M:=\ker(I-E), \N=\ker E$ and $\St=\mul E.$  Then the following are equivalent:
\begin{enumerate}
\item $E$ is closed;
\item $\M, \N, \St$ and $\M^{\perp}+\N^{\perp}+\St^{\perp}$ are closed.
\end{enumerate}
\end{proposition}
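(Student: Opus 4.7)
The plan is to reduce the statement to the characterization of closedness for the relation $P_{\M,\N} \ \hat{+} \ (\{0\} \times \St)$ provided by Lemma \ref{Tclosed}(2), using that the super-idempotency of $E$ gives us exactly this representation via Proposition \ref{supra}.

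First I would recall that, since $E$ is super-idempotent, Proposition \ref{supra} yields
\[
E = P_{\M,\N} \ \hat{+} \ (\{0\} \times \St),
\]
where $\M=\ker(I-E),$ $\N=\ker E$ and $\St=\mul E.$ For the implication $(i)\Rightarrow(ii),$ I would use general facts about closed linear relations: if $E$ is closed, then $\ker E$ and $\mul E$ are closed (noted after the definition of the adjoint in the preliminaries), so $\N$ and $\St$ are closed. Moreover, $I-E$ is the image of $E$ under the homeomorphism $(x,y)\mapsto(x,x-y)$ of $\HH\times\HH,$ so it is closed and hence its kernel $\M=\ker(I-E)$ is closed too. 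With $\M,\N,\St$ closed, Lemma \ref{Tclosed}(2) applied to the representation above forces $\M^{\perp}+\N^{\perp}+\St^{\perp}$ to be closed.

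For the converse $(ii)\Rightarrow(i)$, the job is essentially done: $\M,\N,\St$ are closed, so $P_{\M,\N}$ is closed by Proposition \ref{L2}, and Lemma \ref{Tclosed}(2) immediately gives that
\[
E=P_{\M,\N} \ \hat{+} \ (\{0\} \times \St)
\]
is closed, since $\M^{\perp}+\N^{\perp}+\St^{\perp}$ is assumed closed.

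I do not anticipate a real obstacle here; the only subtle point is checking that $I-E$ is closed whenever $E$ is, which is immediate from the homeomorphism argument above. Once the representation of $E$ from Proposition \ref{supra} is in hand, both implications are direct consequences of Lemma \ref{Tclosed}(2).
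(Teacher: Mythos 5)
Your proposal is correct and follows essentially the same route as the paper: represent $E$ as $P_{\M,\N}\ \hat{+}\ (\{0\}\times\St)$ via Proposition \ref{supra} and then invoke Lemma \ref{Tclosed}(2) in both directions, using the standard facts that a closed relation has closed kernel and multivalued part. The only cosmetic difference is that you obtain the closedness of $I-E$ from the homeomorphism $(x,y)\mapsto(x,x-y)$ of $\HH\times\HH$, whereas the paper computes $\ol{I-E}=(I-E)^{**}=I-E^{**}=I-E$; both arguments are valid.
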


\begin{proof} By Proposition \ref{supra}, $E=P_{\M,\N} \ \hat{+} \ (\{0\} \times \St).$
If $E$ is closed,  $\N=\ker E$ and $\St=\mul E$ are closed. Also, since $I$ is bounded, $\ol{I-E}=(I-E)^{**}=I-E^{**}=I-\ol{E}=I-E.$ So that $I-E$ is closed and $\M=\ker(I-E)$ is  closed. Then, by Lemma \ref{Tclosed},  $\M^{\perp}+\N^{\perp}+\St^{\perp}$ is closed.
The converse follows applying Lemma \ref{Tclosed} once again.
\end{proof}

As a corollary we get a characterization of the closed idempotents. 
\begin{theorem} Let $E=P_{\M,\N,\St}.$ Then $E$ is closed if and only if $\M, \N, \St$ and $\M^{\perp}+\N^{\perp}+\St^{\perp}$ are closed.
\end{theorem}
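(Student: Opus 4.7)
The plan is to observe that this theorem is an immediate specialization of Proposition \ref{Ecerrado}. Indeed, by definition $P_{\M,\N,\St}$ makes sense precisely when $\M,\N,\St$ satisfy the idempotency condition, and in that case $E:=P_{\M,\N,\St}$ lies in $\Id(\HH)$ and admits the representation
\[
E=P_{\M,\N} \ \hat{+} \ (\{0\}\times\St),
\]
with $\ker(I-E)=\M$, $\ker E=\N$ and $\mul E=\St$ (this is the content of Proposition \ref{theic} together with the definition of $P_{\M,\N,\St}$).

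Since every idempotent is super-idempotent (it satisfies both inclusions $E^2\subseteq E\subseteq E^2$), $E$ falls into the hypotheses of Proposition \ref{Ecerrado}. That proposition states that a super-idempotent $E$ with $\M=\ker(I-E)$, $\N=\ker E$, $\St=\mul E$ is closed if and only if $\M,\N,\St$ and $\M^{\perp}+\N^{\perp}+\St^{\perp}$ are all closed. Substituting the identifications above yields the equivalence claimed.

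Consequently, the proof is a single line invoking Proposition \ref{Ecerrado}. There is no genuine obstacle: the entire technical content (closedness of $P_{\ol\M,\ol\N}\cap(\ol\St\times\HH)$-type sums, the use of $\ol{I-E}=I-\ol E$ to transfer closedness of $E$ to that of $\ker(I-E)$, and the application of Lemma \ref{Tclosed}) has already been carried out in the proof of Proposition \ref{Ecerrado}. The only thing to verify is that the triple $(\M,\N,\St)$ indexing $P_{\M,\N,\St}$ is exactly the triple $(\ker(I-E),\ker E,\mul E)$ required by that proposition, which is built into the notation $P_{\M,\N,\St}$.
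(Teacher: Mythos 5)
Your proposal is correct and matches the paper exactly: the paper presents this theorem as an immediate corollary of Proposition \ref{Ecerrado}, relying precisely on the fact that $P_{\M,\N,\St}$ is by definition the idempotent (hence super-idempotent) with $\ker(I-E)=\M$, $\ker E=\N$ and $\mul E=\St$. No further argument is needed.
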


Our next goal is to study the adjoint and closure of idempotent relations. In general, these operations are not closed on $\Id(\HH)$ (see Examples \ref{starnoid} and \ref{exclausura}).

\begin{proposition} \label{propestrella} 
$$(P_{\M,\N} \ \hat{+} \ (\{0\} \times \St))^*=P_{\N^{\perp},\M^{\perp}} \cap (\St^{\perp} \times \HH).$$
\end{proposition}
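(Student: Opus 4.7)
The plan is to reduce the computation to three ingredients: the formula $(T \ \hat{+}\ S)^* = T^* \cap S^*$ from equation (\ref{sumadj}); the formula $P_{\M,\N}^* = P_{\N^\perp,\M^\perp}$ from Proposition \ref{L2}; and a direct computation of $(\{0\}\times \St)^*$.

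First I would apply (\ref{sumadj}) with $T = P_{\M,\N}$ and $S = \{0\} \times \St$, which gives
$$(P_{\M,\N} \ \hat{+} \ (\{0\} \times \St))^* = P_{\M,\N}^* \cap (\{0\}\times \St)^*.$$
By Proposition \ref{L2} the first factor is $P_{\N^\perp,\M^\perp}$, so it remains to show that $(\{0\}\times \St)^* = \St^\perp \times \HH$; the conclusion then follows by intersecting.

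For the computation of $(\{0\}\times \St)^*$, I would use the characterization of the adjoint recalled in Section~\ref{sec:preliminaries}: a pair $(x,y)$ lies in $(\{0\}\times \St)^*$ exactly when $\PI{s}{x} = \PI{0}{y} = 0$ for every $s \in \St$, that is, when $x\in \St^\perp$ and $y$ is arbitrary. This gives $(\{0\}\times \St)^* = \St^\perp \times \HH$, which completes the argument. There is no real obstacle here; the only point that requires care is to invoke (\ref{sumadj}) (an equality, not merely the inclusion (\ref{sum})) to obtain the adjoint of a $\hat{+}$-sum as an intersection.
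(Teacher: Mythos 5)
Your proof is correct and follows exactly the paper's route: the paper's entire proof is ``Apply \eqref{sumadj} and Proposition \ref{L2}.'' You merely make explicit the easy computation $(\{0\}\times \St)^*=\St^{\perp}\times\HH$ from the defining characterization of the adjoint, which the paper leaves implicit.
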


\begin{proof} Apply \eqref{sumadj} and Proposition \ref{L2}.
\end{proof}

By the above proposition, we get that the adjoint of a super-idempotent is always sub-idempotent. However, a similar statement is no longer valid if we interchange the sub- and super-idempotent condition. 

\begin{proposition} \label{propestar1}
	$$(\PMN \cap (\St \times \HH))^*=P_{\N^{\perp},\M^{\perp}} \ \hat{+} \ (\{0\} \times\St^{\perp})$$ if and only if
	$$\ol{\M}+\ol{\N}+\ol{\St} \mbox{ is closed and } \ol{\PMN \cap (\St \times \HH)}=P_{\ol{\M},\ol{\N}} \cap (\ol{\St}\times \HH).$$
\end{proposition}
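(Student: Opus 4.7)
The plan is to introduce $R:=\PMN\cap(\St\times\HH)$ and $Q:=P_{\N^\perp,\M^\perp}\ \hat{+}\ (\{0\}\times\St^\perp)$, and exploit the duality $R^{**}=\ol{R}$ together with a single preliminary computation of $Q^*$. First I would compute $Q^*$ once and for all. Applying \eqref{sumadj} gives $Q^*=(P_{\N^\perp,\M^\perp})^*\cap(\{0\}\times\St^\perp)^*$. By Proposition \ref{L2}, $(P_{\N^\perp,\M^\perp})^*=P_{\ol{\M},\ol{\N}}$; and an immediate check from the defining relation $\PI{g}{x}=\PI{f}{y}$ shows that $(\{0\}\times\St^\perp)^*=\ol{\St}\times\HH$. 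Therefore
$$Q^*=P_{\ol{\M},\ol{\N}}\cap(\ol{\St}\times\HH).$$

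For the forward direction, assume $R^*=Q$. Since every adjoint is closed, $Q$ is closed; invoking Lemma \ref{Tclosed} item 2 with $\M,\N,\St$ replaced by the closed subspaces $\N^\perp,\M^\perp,\St^\perp$ forces $(\N^\perp)^\perp+(\M^\perp)^\perp+(\St^\perp)^\perp=\ol{\M}+\ol{\N}+\ol{\St}$ to be closed. Taking one more adjoint, and using the preliminary computation, yields
$$\ol{R}=R^{**}=Q^*=P_{\ol{\M},\ol{\N}}\cap(\ol{\St}\times\HH),$$
which is exactly the second required conclusion.

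For the converse, suppose $\ol{\M}+\ol{\N}+\ol{\St}$ is closed and $\ol{R}=P_{\ol{\M},\ol{\N}}\cap(\ol{\St}\times\HH)$. By Lemma \ref{Tclosed} item 2 (again with $\N^\perp,\M^\perp,\St^\perp$ in place of $\M,\N,\St$), the relation $Q$ is closed, so $Q^{**}=Q$. Combining with the preliminary computation, the hypothesis reads $\ol{R}=Q^*$. Taking adjoints once more and using $R^*=(\ol R)^*$,
$$R^*=(\ol{R})^*=(Q^*)^*=Q^{**}=Q,$$
as required. The only delicate point throughout is the bookkeeping around closures: since $\M,\N,\St$ need not be closed, one must be careful that the adjoint of $\{0\}\times\St^\perp$ involves $(\St^\perp)^\perp=\ol{\St}$ rather than $\St$, and that Lemma \ref{Tclosed} is invoked with the closed subspaces $\N^\perp,\M^\perp,\St^\perp$, for which the relevant orthogonal complements are $\ol{\N},\ol{\M},\ol{\St}$. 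Once this is observed, the proof is a short symmetric argument between $R$ and $Q$ mediated by the adjoint.
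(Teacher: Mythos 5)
Your proof is correct and follows essentially the same route as the paper: the paper likewise sets $R:=\PMN\cap(\St\times\HH)$, obtains the closedness of $\ol{\M}+\ol{\N}+\ol{\St}$ from the closedness of the adjoint via Lemma \ref{Tclosed}/Corollary \ref{Tclosed2}, and passes between $R^*$ and $\ol{R}=R^{**}$ using the computation $\bigl(P_{\N^{\perp},\M^{\perp}}\ \hat{+}\ (\{0\}\times\St^{\perp})\bigr)^*=P_{\ol{\M},\ol{\N}}\cap(\ol{\St}\times\HH)$, which is exactly Proposition \ref{propestrella}. Your bookkeeping of the closures is accurate, so nothing further is needed.
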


\begin{proof} Set $R:=\PMN \cap (\St \times \HH).$ If $R^*=P_{\N^{\perp},\M^{\perp}} \ \hat{+} \ (\{0\} \times\St^{\perp})$ then, by Corollary \ref{Tclosed2}, $\ol{\M}+\ol{\N}+\ol{\St}$ is closed. 
	Also, $\ol{R}=R^{**}=P_{\ol{\M},\ol{\N}} \cap (\ol{\St}\times \HH).$
	
	Conversely, if $\ol{R}=P_{\ol{\M},\ol{\N}} \cap (\ol{\St}\times \HH)$ then, by Proposition \ref{L2}, $$R^*=\ol{P_{\N^{\perp},\M^{\perp}} \ \hat{+} \ (\{0\} \times\St^{\perp})}=P_{\N^{\perp},\M^{\perp}} \ \hat{+} \ (\{0\} \times\St^{\perp}),$$ because $\ol{\M}+\ol{\N}+\ol{\St}$ is closed. 
\end{proof}

\begin{corollary} Let $\M,$ $\N$ and $\St$ be operator ranges of $\HH$ such that $\M+\N+\St$ is closed. Then 
$$(P_{\M,\N} \cap (\St\times \HH))^*=P_{\N^{\perp},\M^{\perp}} \ \hat{+} \ (\{0\} \times\St^{\perp}).$$
\end{corollary}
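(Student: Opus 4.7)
The plan is to derive the corollary directly from Proposition \ref{propestar1}, which gives an equivalence for $(\PMN \cap (\St \times \HH))^* = P_{\N^{\perp},\M^{\perp}} \ \hat{+} \ (\{0\} \times \St^{\perp})$ in terms of two conditions: closedness of $\ol{\M}+\ol{\N}+\ol{\St}$ and the explicit formula for the closure $\ol{\PMN \cap (\St \times \HH)} = P_{\ol{\M},\ol{\N}} \cap (\ol{\St}\times \HH)$. So my task reduces to verifying these two conditions under the hypothesis that $\M, \N, \St$ are operator ranges with $\M+\N+\St$ closed.

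For the second condition, I would simply invoke Lemma \ref{LemaOR}, whose hypotheses are exactly those of the corollary: $\M, \N, \St$ are operator ranges and $\M + \N + \St$ is closed. This gives the desired formula for the closure at no further cost.

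For the first condition, I need to show that $\ol{\M}+\ol{\N}+\ol{\St}$ is closed. The key observation is the chain of inclusions
\[
\M+\N+\St \ \subseteq \ \ol{\M}+\ol{\N}+\ol{\St} \ \subseteq \ \ol{\M+\N+\St},
\]
where the first inclusion holds trivially and the second follows from monotonicity of the closure applied to each summand (since $\M \subseteq \M+\N+\St$ implies $\ol{\M} \subseteq \ol{\M+\N+\St}$, and similarly for $\N$ and $\St$). Under the hypothesis that $\M+\N+\St$ is closed, the outermost term collapses to $\M+\N+\St$, so the three sets coincide and $\ol{\M}+\ol{\N}+\ol{\St} = \M+\N+\St$ is closed.

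With both conditions verified, Proposition \ref{propestar1} delivers the identity $(\PMN \cap (\St \times \HH))^* = P_{\N^{\perp},\M^{\perp}} \ \hat{+} \ (\{0\} \times \St^{\perp})$, completing the argument. There is essentially no obstacle here, since the corollary is a clean specialization of Proposition \ref{propestar1}; the only content is the sandwich argument showing that closedness of $\M+\N+\St$ forces closedness of $\ol{\M}+\ol{\N}+\ol{\St}$.
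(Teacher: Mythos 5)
Your proposal is correct and follows the paper's own (one-line) proof exactly: invoke Lemma \ref{LemaOR} for the closure formula and then apply Proposition \ref{propestar1}. The sandwich argument $\M+\N+\St \subseteq \ol{\M}+\ol{\N}+\ol{\St} \subseteq \ol{\M+\N+\St}$ correctly supplies the closedness of $\ol{\M}+\ol{\N}+\ol{\St}$, a detail the paper leaves implicit.
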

\begin{proof} Apply Lemma \ref{LemaOR} and Proposition \ref{propestar1}.
\end{proof}

\begin{corollary} Let $E \in \lr(\HH)$ be sub-idempotent. Then $E^*$ is super-idempotent if and only if $\clran E+\clran(I-E)+\cldom E$ is closed and $\ol{E}=P_{\clran E, \clran(I-E)} \cap (\cldom E \times \HH).$ 
\end{corollary}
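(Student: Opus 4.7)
The plan is to apply Proposition \ref{propestar1} to the canonical representation of $E$ as a sub-idempotent, and then to identify the resulting form of $E^*$ as precisely the one characterizing super-idempotents.

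Since $E$ is sub-idempotent, Proposition \ref{sub} yields $E = P_{\M,\N} \cap (\St \times \HH)$ with $\M := \ran E$, $\N := \ran(I-E)$ and $\St := \dom E$. Substituting these subspaces into Proposition \ref{propestar1}, the two conditions ``$\clran E + \clran(I-E) + \cldom E$ is closed'' and ``$\ol{E}=P_{\clran E, \clran(I-E)} \cap (\cldom E \times \HH)$'' hold jointly if and only if
$$E^* = P_{\N^{\perp}, \M^{\perp}} \ \hat{+} \ (\{0\} \times \St^{\perp}).$$
Thus the corollary reduces to showing that this identity is equivalent to $E^*$ being super-idempotent.

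The forward direction is immediate from Corollary \ref{supcor1}, since the right-hand side has the exact form characterizing super-idempotents. For the reverse, assume $E^*$ is super-idempotent. Applying Proposition \ref{supra} to $E^*$ yields
$$E^* = P_{\ker(I-E^*), \ker E^*} \ \hat{+} \ (\{0\} \times \mul E^*).$$
The preliminaries give $\ker E^* = (\ran E)^{\perp} = \M^{\perp}$ and $\mul E^* = (\dom E)^{\perp} = \St^{\perp}$, and the adjoint identity $(I-E)^* = I - E^*$ (analogous to $\ol{I-E} = I - \ol{E}$ used in the proof of Proposition \ref{Ecerrado}, and holding because $-I$ is bounded and everywhere defined) gives $\ker(I-E^*) = (\ran(I-E))^{\perp} = \N^{\perp}$. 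This identifies $E^*$ in the desired form.

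The only mildly delicate point in the whole argument is the adjoint identity $(I-E)^* = I - E^*$, which is already used implicitly in Section 5 and requires no separate verification. Everything else is a straightforward invocation of the characterizations in Section 3 together with Proposition \ref{propestar1}.
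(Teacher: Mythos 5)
Your proposal is correct and follows essentially the same route as the paper: both directions hinge on Proposition \ref{propestar1} applied to the representation $E=P_{\ran E,\ran(I-E)}\cap(\dom E\times\HH)$ from Proposition \ref{sub}, together with Proposition \ref{supra} applied to $E^*$ and the identifications $\ker E^*=(\ran E)^\perp$, $\mul E^*=(\dom E)^\perp$, $\ker(I-E^*)=(\ran(I-E))^\perp$. The only cosmetic difference is that you invoke Proposition \ref{propestar1} as a single equivalence in both directions, whereas the paper re-derives the forward implication directly via Lemma \ref{Tclosed} and \eqref{sumadj}; the content is the same.
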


\begin{proof} If $E^*$ is super-idempotent then, by Proposition \ref{supra}, $$E^*=P_{\ker(I-E^*), \ker E^*} \ \hat{+} \ (\{0\}\times \mul E^*)=P_{\ran(I-E)^{\perp}, \ran E^{\perp}} \ \hat{+} \ (\{0\}\times \dom E^{\perp}).$$ Then, by Lemma \ref{Tclosed}, $\clran E+\clran(I-E)+\cldom E$ is closed and, by \eqref{sumadj} and Proposition \ref{L2}, $\ol{E}=P_{\clran E, \clran(I-E)} \cap (\cldom E \times \HH).$ 
Conversely, since $E$ is sub-idempotent, by Proposition \ref{sub}, $E=P_{\ran E, \ran(I-E)} \cap (\dom E \times \HH).$ Then, by Proposition \ref{propestar1}, $E^*=P_{\ran(I-E)^{\perp}, \ran E^{\perp}} \ \hat{+} \ (\{0\}\times \dom E^{\perp}).$ Then, by Proposition \ref{supra}, $E^*$ is super-idempotent.
\end{proof}

As a corollary of Proposition \ref{propestar1} we get the following characterization of those idempotents admitting an idempotent adjoint.

\begin{theorem} \label{PropStarcl} Let $E =P_{\M,\N,\St}.$ Then $E^* \in \Id(\HH)$ if and only if $$\ol{E}=P_{\ol{\M+\St}, \ol{\N+\St}} \cap ((\ol{\M+\N})\times \HH)$$ and $\ol{\M+\St}+\ol{\N+\St}+\ol{\M+\N}$ is closed.
\end{theorem}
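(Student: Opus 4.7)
The plan is to produce a concrete candidate $F$ for $E^{*}$ in the ``super-idempotent form'' of Proposition~\ref{supra} and then to move between $E^{*}$ and $\overline{E}$ using Lemma~\ref{Tclosed} and Proposition~\ref{propestar1}. By Corollary~\ref{kerran} applied to $E=P_{\M,\N,\St}$, one has $\dom E=\M+\N$, $\ran E=\M+\St$ and $\ran(I-E)=\N+\St$. Combining this with the standard adjoint formulas $\mul E^{*}=(\dom E)^{\perp}$, $\ker E^{*}=(\ran E)^{\perp}$, and $\ker(I-E^{*})=(\ran(I-E))^{\perp}$ (using $(I-E)^{*}=I-E^{*}$), the natural candidate is
\[
F:=P_{(\N+\St)^{\perp},\,(\M+\St)^{\perp}}\ \hat{+}\ (\{0\}\times(\M+\N)^{\perp}),
\]
which is automatically super-idempotent by Corollary~\ref{supcor1}.

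For $(\Rightarrow)$, assume $E^{*}\in\Id(\HH)$. Corollary~\ref{corid} then yields $E^{*}=P_{\ker(I-E^{*}),\ker E^{*}}\ \hat{+}\ (\{0\}\times\mul E^{*})$, which equals $F$ by the computation above. Since $E^{*}$ is automatically closed, Lemma~\ref{Tclosed}(2) forces $\overline{\M+\St}+\overline{\N+\St}+\overline{\M+\N}$ to be closed. Proposition~\ref{propestrella} applied to $F$ then gives
\[
\overline{E}=E^{**}=F^{*}=P_{\overline{\M+\St},\,\overline{\N+\St}}\cap(\overline{\M+\N}\times\HH),
\]
which is the required representation.

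For $(\Leftarrow)$, the assumed representation of $\overline{E}$ has closed components, and the sum-closedness hypothesis is precisely what is needed to invoke Proposition~\ref{propestar1} on $\overline{E}$ (its ``closure'' hypothesis being trivial as $\overline{E}$ is already closed and displayed in the required form). This produces $E^{*}=(\overline{E})^{*}=F$. It remains to show $F\in\Id(\HH)$: by Corollary~\ref{IP1} this reduces to verifying the idempotency condition for the triplet $\bigl((\N+\St)^{\perp},(\M+\St)^{\perp},(\M+\N)^{\perp}\bigr)$. One has $(\N+\St)^{\perp}\cap(\M+\St)^{\perp}=(\M+\N+\St)^{\perp}$, and for any $x=a+b$ with $a\perp\N+\St$, $b\perp\M+\St$, and $x\perp\M+\N$, both $a$ and $b$ are orthogonal to $\St$, so $x\perp\St$ and hence $x\in(\M+\N+\St)^{\perp}$. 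This verification of IC is unconditional, which is why no further hypotheses are needed. The main technical step is the application of Proposition~\ref{propestar1} in $(\Leftarrow)$; the two stated hypotheses are exactly tailored to trigger it, after which the idempotency of $F$ drops out from an elementary orthogonality argument.
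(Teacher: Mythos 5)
Your proof is correct and takes essentially the same route as the paper, whose entire argument is a one-line application of Proposition~\ref{propestar1} to $E=P_{\M+\St,\,\N+\St}\cap((\M+\N)\times\HH)$. You have merely made explicit the step the paper leaves implicit, namely the equivalence $E^*\in\Id(\HH)\iff E^*=P_{(\N+\St)^{\perp},(\M+\St)^{\perp}}\ \hat{+}\ (\{0\}\times(\M+\N)^{\perp})$, which you verify correctly (your unconditional check of the idempotency condition for the orthogonal-complement triplet is the content the paper defers to Propositions~\ref{propestrella} and~\ref{estrella}).
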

\begin{proof} 
	The result follows applying Proposition \ref{propestar1} to $E=P_{\M+\St, \N+\St} \cap ((\M+\N)\times \HH).$
\end{proof}

\begin{proposition} \label{estrella}  Let $E =P_{\M,\N,\St}.$
Then the following are equivalent:
\begin{enumerate}
	\item $E^* \in \Id(\HH);$
	\item $E^*=P_{\N^{\perp}\cap \St^{\perp},\M^{\perp}\cap \St^{\perp},\M^{\perp}\cap \N^{\perp}};$
	\item $(\N^{\perp}+\M^{\perp})\cap \St^{\perp} =\N^{\perp}  \cap \St^{\perp}+ \M^{\perp}  \cap \St^{\perp}.$
\end{enumerate}
\end{proposition}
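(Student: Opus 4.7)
The plan is to rewrite $E^*$ as a sub-idempotent via Proposition \ref{propestrella} and then read off idempotency from Proposition \ref{lemaIC}(1). Since $E=P_{\M,\N,\St}=P_{\M,\N}\hat{+}(\{0\}\times\St)$, Proposition \ref{propestrella} gives
\[
E^{*}=P_{\N^{\perp},\M^{\perp}}\cap(\St^{\perp}\times\HH),
\]
and by Corollary \ref{subcor1} this relation is automatically sub-idempotent. Applying Lemma \ref{propmul}(1) I would compute $\dom E^{*}=(\N^{\perp}+\M^{\perp})\cap\St^{\perp}$, $\ker E^{*}=\M^{\perp}\cap\St^{\perp}$, $\ran E^{*}=\N^{\perp}\cap(\M^{\perp}+\St^{\perp})$, and $\mul E^{*}=\N^{\perp}\cap\M^{\perp}$.

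For $(i)\Leftrightarrow(iii)$, Proposition \ref{lemaIC}(1) says that the sub-idempotent $E^{*}$ belongs to $\Id(\HH)$ iff $\dom E^{*}=\ran E^{*}\cap\dom E^{*}+\ker E^{*}$. To pin down $\ran E^{*}\cap\dom E^{*}$, I would use Proposition \ref{sub}(iii), which for sub-idempotents identifies this intersection with $\ker(I-E^{*})$. Since Corollary \ref{coric} gives $I-E=P_{\N,\M,\St}$, a second use of Proposition \ref{propestrella} together with Lemma \ref{propmul} shows $\ker(I-E^{*})=\ran(I-E)^{\perp}=(\N+\St)^{\perp}=\N^{\perp}\cap\St^{\perp}$. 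Substituting into the Proposition \ref{lemaIC} condition converts it into
\[
(\N^{\perp}+\M^{\perp})\cap\St^{\perp}=\N^{\perp}\cap\St^{\perp}+\M^{\perp}\cap\St^{\perp},
\]
which is exactly (iii).

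For $(i)\Leftrightarrow(ii)$, I would first check that the triplet $(\N^{\perp}\cap\St^{\perp},\M^{\perp}\cap\St^{\perp},\N^{\perp}\cap\M^{\perp})$ satisfies the IC unconditionally, so that the symbol $P_{\N^{\perp}\cap\St^{\perp},\,\M^{\perp}\cap\St^{\perp},\,\N^{\perp}\cap\M^{\perp}}$ is legitimate: the sum $\N^{\perp}\cap\St^{\perp}+\M^{\perp}\cap\St^{\perp}$ is contained in $\St^{\perp}$, so its intersection with $\N^{\perp}\cap\M^{\perp}$ is contained in $\M^{\perp}\cap\N^{\perp}\cap\St^{\perp}$, which coincides with $(\N^{\perp}\cap\St^{\perp})\cap(\M^{\perp}\cap\St^{\perp})$; the reverse inclusion is immediate. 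Assuming $(i)$, Corollary \ref{corid2} says $E^{*}=P_{\ker(I-E^{*}),\ker E^{*}}\hat{+}(\{0\}\times\mul E^{*})$, and the three subspaces have already been identified as $\N^{\perp}\cap\St^{\perp}$, $\M^{\perp}\cap\St^{\perp}$, $\N^{\perp}\cap\M^{\perp}$, giving $(ii)$. The converse is immediate since every $P_{\cdot,\cdot,\cdot}$ lies in $\Id(\HH)$ by construction.

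I do not anticipate a serious obstacle: all the work is bookkeeping with the formulas from Lemma \ref{propmul} and Proposition \ref{propestrella}. The only point requiring a small argument is the unconditional IC check for the triplet in (ii), and that reduces to showing both sides collapse to $\M^{\perp}\cap\N^{\perp}\cap\St^{\perp}$ by two elementary inclusions.
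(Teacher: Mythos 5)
Your proposal is correct and follows essentially the same route as the paper: express $E^*$ as the sub-idempotent $P_{\N^{\perp},\M^{\perp}}\cap(\St^{\perp}\times\HH)$, identify $\ker(I-E^*)$, $\ker E^*$, $\mul E^*$ with $\N^{\perp}\cap\St^{\perp}$, $\M^{\perp}\cap\St^{\perp}$, $\M^{\perp}\cap\N^{\perp}$, and conclude via Proposition \ref{lemaIC} and Corollary \ref{corid}. You merely spell out the bookkeeping (including the unconditional IC check for the triplet in (ii)) that the paper leaves implicit.
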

\begin{proof} Since $P_{\ker(I-E^*), \ker E^*} \ \hat{+} \ (\{0\} \times \mul E^*)=P_{\N^{\perp}\cap \St^{\perp},\M^{\perp}\cap \St^{\perp},\M^{\perp}\cap \N^{\perp}}$
and $E^*$ is sub-idempotent, the result follows from Proposition \ref{lemaIC} and Corollary \ref{corid}.
\end{proof}

\begin{corollary} \label{corclausura} Let $E =P_{\M,\N,\St}.$ Then the following are equivalent:
	\begin{enumerate}
		\item $E^*$ and  $\ol{E} \in \Id(\HH);$
		\item $(\N^{\perp}+\M^{\perp})\cap \St^{\perp} =\N^{\perp}  \cap \St^{\perp}+ \M^{\perp}  \cap \St^{\perp}$ and \\
		$(\ol{\M+\St}+\ol{\N+\St}) \cap \ol{\M+\N}=\ol{\M+\St}\cap \ol{\M+\N}+\ol{\N+\St}\cap \ol{\M+\N};$
		\item $E^*=P_{\N^{\perp}\cap \St^{\perp},\M^{\perp}\cap \St^{\perp},\M^{\perp}\cap \N^{\perp}}$ and \\ $\ol{E}=P_{\ol{\M+\St}\cap \ol{\M+\N}, \ol{\N+\St} \cap \ol{\M+\N}, \ol{\M+\St}\cap \ol{\N+\St}}.$
	\end{enumerate} 
\end{corollary}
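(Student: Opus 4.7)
The statement is a simultaneous ``double'' application of Proposition~\ref{estrella}: once to $E^*$, and, by means of Theorem~\ref{PropStarcl}, once more to $\ol{E}$. I would prove the cycle $(i) \Rightarrow (ii) \Rightarrow (iii) \Rightarrow (i)$. Throughout, set
\[
\X := \ol{\M+\St}, \quad \Y := \ol{\N+\St}, \quad \Z := \ol{\M+\N}.
\]
Proposition~\ref{estrella} immediately gives that $E^* \in \Id(\HH)$ is equivalent to the first condition of $(ii)$ and to the first displayed formula of $(iii)$; that disposes of the ``$E^*$-half'' of every implication.

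For the ``$\ol{E}$-half'', the bridge is Theorem~\ref{PropStarcl}: whenever $E^* \in \Id(\HH)$, one automatically has $\ol{E} = \PXY \cap (\Z \times \HH)$. This representation is sub-idempotent by Corollary~\ref{subcor1}, so Proposition~\ref{lemaIC}-$1$ tells us $\ol{E}$ is in fact idempotent precisely when $\dom \ol{E} = \ran \ol{E} \cap \dom \ol{E} + \ker \ol{E}$. I would now apply Lemma~\ref{propmul} (with $\X,\Y,\Z$ playing the roles of $\M,\N,\St$) to compute
\[
\dom \ol{E} = (\X+\Y) \cap \Z, \quad \ran \ol{E} = \X \cap (\Y+\Z), \quad \ker \ol{E} = \Y \cap \Z, \quad \mul \ol{E} = \X \cap \Y,
\]
and use the trivial inclusions $\X \subseteq \X+\Y$ and $\Z \subseteq \Y+\Z$ to simplify $\ran \ol{E} \cap \dom \ol{E} = \X \cap \Z$. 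The idempotency criterion then reads
\[
(\X+\Y) \cap \Z = \X \cap \Z + \Y \cap \Z,
\]
which is exactly the second condition of $(ii)$. Under this condition, Corollary~\ref{corid} produces the representation $\ol{E} = P_{\X \cap \Z,\, \Y \cap \Z,\, \X \cap \Y}$, matching the second formula of $(iii)$.

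Chaining these observations closes the cycle: $(i) \Rightarrow (ii)$ combines Proposition~\ref{estrella} with Theorem~\ref{PropStarcl} and the computation above; $(ii) \Rightarrow (iii)$ uses the first clause of $(ii)$ to apply Theorem~\ref{PropStarcl} and place $\ol{E}$ in the desired form, then uses the second clause and the criterion from Proposition~\ref{lemaIC}-$1$ to upgrade $\ol{E}$ to an idempotent with the stated representation; and $(iii) \Rightarrow (i)$ is immediate since both formulas exhibit $E^*$ and $\ol{E}$ as idempotent relations via Corollary~\ref{corid}. The main obstacle is purely bookkeeping---tracking which sums and intersections need to be closed and repeatedly invoking the correct form of Lemma~\ref{propmul}; no new analytic ingredient beyond Proposition~\ref{estrella}, Theorem~\ref{PropStarcl}, and Proposition~\ref{lemaIC} is required.
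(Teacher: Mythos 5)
Your proposal is correct and follows exactly the derivation the paper intends for this corollary (which it states without a printed proof): Proposition~\ref{estrella} handles the $E^*$ clauses, and Theorem~\ref{PropStarcl} together with the sub-idempotency criterion of Proposition~\ref{lemaIC} and the computations of Lemma~\ref{propmul} handles the $\ol{E}$ clauses. All the subspace bookkeeping in your argument checks out, including the simplification $\ran\ol{E}\cap\dom\ol{E}=\ol{\M+\St}\cap\ol{\M+\N}$ and the identification of $\ker(I-\ol{E})$ via Proposition~\ref{sub}.
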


Using results about the adjoint of linear relations \cite{Sandovici} and operator ranges \cite{Labrousse1980}, we give examples of closed idempotent linear relations with idempotent adjoint and idempotent linear relations admitting idempotent adjoint and idempotent closure.

\begin{proposition} \label{corexSando} Let $E =P_{\M,\N,\St}$ such that $\M+\St$ and $\N$ are closed. 
If $$\N^{\perp} \cap \St^{\perp}+\N^{\perp}\cap \M^{\perp}=\N^{\perp}$$ then $E$ is closed and $E^*\in \Id(\HH).$
\end{proposition}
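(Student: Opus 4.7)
The plan is to split the argument into two essentially independent pieces: first an algebraic manipulation of subspaces showing that $E^*\in\Id(\HH)$, and then an application of Sandovici's theorem (Theorem~\ref{sando1}) to deduce that $E$ is closed.

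For the first piece, by Proposition~\ref{estrella} the condition $E^*\in\Id(\HH)$ amounts to the identity
\[
(\N^\perp+\M^\perp)\cap \St^\perp=\N^\perp\cap \St^\perp+\M^\perp\cap \St^\perp.
\]
The inclusion $\supseteq$ is immediate, so I only need the reverse one. Given $x=a+b\in \St^\perp$ with $a\in \N^\perp$ and $b\in \M^\perp$, the hypothesis furnishes a decomposition $a=a_1+a_2$ with $a_1\in \N^\perp\cap \St^\perp$ and $a_2\in \N^\perp\cap \M^\perp$. Then $x-a_1=a_2+b$ lies in $\M^\perp$ (as a sum of elements of $\M^\perp$) and in $\St^\perp$ (as a difference of elements of $\St^\perp$), giving the desired decomposition $x=a_1+(x-a_1)\in \N^\perp\cap \St^\perp+\M^\perp\cap \St^\perp$.

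For the second piece, I would apply Theorem~\ref{sando1} with $A:=E$ and $B:=E^*$. The inclusion $A\subseteq B^*=E^{**}$ holds automatically. To verify the range/kernel hypotheses, note that by Proposition~\ref{propestrella} and Lemma~\ref{propmul},
\[
\ran E^*=\N^\perp\cap (\M^\perp+\St^\perp)\quad\text{and}\quad \ker E^*=(\ran E)^\perp=(\M+\St)^\perp.
\]
The standing hypothesis says $\N^\perp\subseteq \M^\perp+\St^\perp$, so $\ran E^*=\N^\perp$, and because $\N$ is closed we get $\ker E+\ran E^*=\N+\N^\perp=\HH$. On the other hand, $\M+\St$ being closed yields $\ker E^*+\ran E=(\M+\St)^\perp+(\M+\St)=\HH$. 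Thus Theorem~\ref{sando1} applies and gives $E=B^*=E^{**}=\overline{E}$, i.e. $E$ is closed.

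The main obstacle is the interplay between the algebraic hypothesis and the two closedness assumptions: the former is exactly what is needed to make $\ran E^*$ collapse to $\N^\perp$, while the two closedness assumptions are precisely what guarantees the decompositions $\N+\N^\perp=\HH$ and $(\M+\St)+(\M+\St)^\perp=\HH$ required to feed into Sandovici's theorem. Once these ingredients are lined up, both conclusions follow with no further work; in particular, the proof does not require $\M$ or $\St$ to be individually closed.
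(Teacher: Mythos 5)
Your argument is correct, and it reaches both conclusions by a route that overlaps with but is not identical to the paper's. The paper's proof is a one-liner: it applies Theorem~\ref{sando1} to the pair $A=P_{\N^{\perp}\cap \St^{\perp},\,\M^{\perp}\cap \St^{\perp},\,\M^{\perp}\cap \N^{\perp}}$ and $B=E$. Since this $A$ is already an idempotent (it is the ``maximal'' idempotent of Proposition~\ref{max}, so the IC is automatic) and is contained in $E^*$ by Lemma~\ref{always}, the hypotheses $\ker A+\ran B=\M^{\perp}\cap\St^{\perp}+\M+\St=\HH$ and $\ker B+\ran A=\N+\N^{\perp}=\HH$ (the latter using exactly your hypothesis to collapse $\ran A$ to $\N^{\perp}$) give in a single stroke that $A=E^*$, so $E^*\in\Id(\HH)$, and that $B=E$ is closed. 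You instead feed Sandovici's theorem the pair $(A,B)=(E,E^*)$, which only yields $E=E^{**}=\ol{E}$, i.e.\ closedness of $E$, and so you must establish $E^*\in\Id(\HH)$ separately; your direct verification of condition $iii)$ of Proposition~\ref{estrella} (decomposing $a=a_1+a_2$ via the hypothesis and observing $x-a_1\in\M^{\perp}\cap\St^{\perp}$) is a clean and correct elementary subspace computation, and your range/kernel checks $\ran E^*=\N^{\perp}\cap(\M^{\perp}+\St^{\perp})=\N^{\perp}$ and $\ker E^*=(\M+\St)^{\perp}$ are the same identities the paper's application rests on. The trade-off: the paper's choice of $A$ makes the idempotency of $E^*$ come for free from the theorem, at the cost of having to recognize the right candidate adjoint; your version is longer but more self-explanatory, making visible exactly where each hypothesis (the algebraic identity versus the two closedness assumptions) is used.
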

\begin{proof} It follows by Theorem \ref{sando1} applied to $A=P_{\N^{\perp}\cap \St^{\perp},\M^{\perp}\cap \St^{\perp},\M^{\perp}\cap \N^{\perp}}$ and $B=P_{\M,\N,\St}.$
\end{proof}


\begin{proposition} \label{corex} Let $E =P_{\M,\N,\St}$ where $\M,$ $\N$ and $\St$ are operator ranges.  If $\M+\N+\St$ is closed then $E^* \in \Id(\HH)$ and $\ol{E} \in \Id(\HH).$
\end{proposition}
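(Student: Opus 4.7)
The plan is to reduce both claims to machinery already developed in the paper. Since $\M,\N,\St$ satisfy the IC, Proposition \ref{the IC equiv} represents
\[E=P_{\M+\St,\N+\St}\cap ((\M+\N)\times \HH).\]
The three subspaces $\M+\St$, $\N+\St$, $\M+\N$ are operator ranges (as sums of operator ranges), and their total sum is $\M+\N+\St$, closed by hypothesis. Lemma \ref{LemaOR} then yields
\[\ol{E}=P_{\ol{\M+\St},\ol{\N+\St}}\cap (\ol{\M+\N}\times \HH).\]

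Next, I would compute the relevant intersections and sums of the closures. Proposition \ref{rangeop}(1) applied to the operator ranges $\M+\St$ and $\N+\St$ (whose sum is the closed space $\M+\N+\St$) gives $\ol{\M+\St}\cap\ol{\N+\St}=\ol{(\M+\St)\cap(\N+\St)}$. The IC, via Proposition \ref{theic}, yields $(\M+\St)\cap(\N+\St)=\St$, so this intersection equals $\ol{\St}$. Moreover, because $(\M+\St)+(\N+\St)=\M+\N+\St$ is closed and contains both $\ol{\M+\St}$ and $\ol{\N+\St}$, one obtains $\ol{\M+\St}+\ol{\N+\St}=\M+\N+\St$; the same inclusion chase gives $\ol{\St}+\ol{\M+\N}=\M+\N+\St$.

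These identities verify condition (ii) of Proposition \ref{theic2} for the triplet $(\ol{\M+\St},\ol{\N+\St},\ol{\M+\N})$, since
\[(\ol{\M+\St}\cap\ol{\N+\St})+\ol{\M+\N}=\ol{\St}+\ol{\M+\N}=\M+\N+\St=\ol{\M+\St}+\ol{\N+\St},\]
so $\ol{E}\in\Id(\HH)$. Theorem \ref{PropStarcl} then delivers $E^*\in\Id(\HH)$: the formula for $\ol{E}$ derived above is exactly the one required, and the sum $\ol{\M+\St}+\ol{\N+\St}+\ol{\M+\N}=\M+\N+\St$ is closed.

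The only delicate step is interchanging closure with intersection in the key identity $\ol{\M+\St}\cap\ol{\N+\St}=\ol{\St}$, which is precisely where both hypotheses (operator ranges and closedness of the total sum) enter, through Proposition \ref{rangeop}(1); without them the closures could fail to inherit the IC and both $\ol{E}$ and $E^*$ could escape $\Id(\HH)$. Once that observation is in place, the rest is bookkeeping via the characterizations of $\Id(\HH)$ from Section 4 together with Lemma \ref{LemaOR} and Theorem \ref{PropStarcl}.
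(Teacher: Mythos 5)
Your proof is correct, and it runs in the opposite direction from the paper's. The paper works entirely on the orthogonal-complement side: it applies Proposition \ref{rangeop}(2) to $\M+\St$ and $\N+\St$ to get $\St^{\perp}=\M^{\perp}\cap\St^{\perp}+\N^{\perp}\cap\St^{\perp}$, reads off the criterion of Proposition \ref{estrella} to conclude $E^*\in\Id(\HH)$, and then shows that the subspaces $\N^{\perp}\cap\St^{\perp}$, $\M^{\perp}\cap\St^{\perp}$, $\M^{\perp}\cap\N^{\perp}$ defining $E^*$ are themselves (closed) operator ranges with closed sum, so the same argument applied to $E^*$ yields $\ol{E}=E^{**}\in\Id(\HH)$. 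You instead stay on the primal side: you compute $\ol{E}$ explicitly via Lemma \ref{LemaOR}, use Proposition \ref{rangeop}(1) plus the identity $(\M+\St)\cap(\N+\St)=\St$ to show the triplet of closures $(\ol{\M+\St},\ol{\N+\St},\ol{\M+\N})$ still satisfies condition ii) of Proposition \ref{theic2}, and then obtain $E^*\in\Id(\HH)$ from Theorem \ref{PropStarcl}. Both arguments hinge on the same Labrousse lemma (you use part 1, the paper part 2) and on the elementary observation that all the relevant closures sit inside the closed subspace $\M+\N+\St$. What your route buys is the explicit formula $\ol{E}=P_{\ol{\M+\St},\ol{\N+\St}}\cap(\ol{\M+\N}\times\HH)$ as a byproduct, which the paper only extracts later in Proposition \ref{clausurares}; what the paper's route buys is economy, since the second claim is literally the first claim applied to $E^*$. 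The only steps you leave implicit --- that a sum of operator ranges is an operator range, and that $(\M+\St)\cap(\N+\St)=\St$ under the IC --- are both standard and are used or stated elsewhere in the paper (the latter in Proposition \ref{the IC equiv}), so there is no gap.
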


\begin{proof} If $\M+\N+\St$ is closed, since $\St=(\M+\St)\cap (\N+\St),$ applying Proposition \ref{rangeop} to $\M+\St$ and $\N+\St,$ we get that 
\begin{equation} \label{eqOPR1}
\St^{\perp}=\M^{\perp}\cap\St^{\perp}+\N^{\perp}\cap \St^{\perp}.
\end{equation}
Then $(\M^{\perp}+\N^{\perp})\cap \St^{\perp} \subseteq \St^{\perp}=\M^{\perp}\cap\St^{\perp}+\N^{\perp}\cap \St^{\perp}.$ Therefore, $(\M^{\perp}+\N^{\perp})\cap \St^{\perp} =\M^{\perp}  \cap \St^{\perp}+ \N^{\perp}  \cap \St^{\perp}$ and, by Proposition \ref{estrella}, $E^* \in \Id(\HH).$ 

Applying again Proposition \ref{rangeop} to $\M+\N$ and $\St,$ it follows that $$((\M+\N)\cap \St)^{\perp}=\M^{\perp}\cap \N^{\perp}+\St^{\perp}.$$ So that $\M^{\perp}\cap \N^{\perp}+\St^{\perp}$ is closed. Therefore,  by \eqref{eqOPR1}, $\M^{\perp}\cap\N^{\perp}+\M^{\perp}\cap\St^{\perp}+\N^{\perp}\cap \St^{\perp}$ is closed. Then, from the same arguments in the first part of the proof applied to $E^*,$ it follows that $\ol{E} \in \Id(\HH).$
\end{proof}

\begin{remark} By Proposition \ref{corex}, if $\HH$ is finite-dimensional then the adjoint of every idempotent linear relation is idempotent.
\end{remark}

Now, we are in a position to give an example of an idempotent $E$ such that $E^*\notin \Id(\HH)$.

\begin{example}  \label{starnoid} Let $\mc X$ be an infinite dimensional Hilbert space and set $\HH:=\mc X \times \mc X \times \mc X.$ Take $\M:=\mc X \times \{0\} \times \{0\}$ and $\N:= \{0\} \times \mc X \times \{0\}.$ Let $\mc Z:=\{0\} \times \{0\} \times \mc X$ and $\mc W:=\{(x,x,0) : x \in \mc X\} \subseteq \M \dotplus \N.$ Following similar arguments as those found in \cite[page 28]{Halmos}, we can construct a closed subspace $\St$ such that $\St \cap \mc W=\{0\}$ and $c_0(\St, \mc W)=1,$ so that $\St \dotplus \mc W$ is not closed and $\St \subseteq   \mc W +  \mc Z:=\Pi.$ 
	
The subspace $\M \dotplus \N=\mc X \times \mc X \times \{0\}$ is closed and $\M \dotplus \Pi=\HH.$ In fact, if $x \in \M \cap \Pi$ then there exists $\alpha \in \mc X$ such that $x=(\alpha, 0,0) \in \Pi.$ But also $x=(\beta, \beta, \gamma)$ with $\beta, \gamma \in \mc X.$ So that $\alpha=\beta=0=\gamma$ and $x=0.$ Also, given $x=(\alpha, \beta,\gamma) \in \mc \HH$ then $x=(\alpha-\beta,0,0) + (\beta,\beta,\gamma)\in \M \dotplus \Pi.$ 
	
Now, let us see that $\M \dotplus \St$ is closed. In fact, $\M \cap \St \subseteq \M \cap \Pi=\{0\}.$ Then $\M \dotplus \St \subseteq \M \dotplus \Pi$ so that, 
$$c_0(\M,\St) \leq c_0(\M,\Pi)<1.$$ Hence  $\M \dotplus \St$ is closed. 
In a similar way, $\N \dotplus \St$ is closed. 
	
Also $\M, \N$ and $\St$ satisfy the IC: in fact $(\M \dotplus \N)\cap \St=(\M \dotplus \N)\cap \St \cap \Pi=\mc W \cap \St=\{0\}=\M \cap \N,$ where we used that $(\M \dotplus \N) \cap \Pi=\mc W.$ Set $E:=P_{\M,\N,\St}.$ Since $\M, \N, \St$ are closed and $\M^{\perp}+\N^{\perp}+\St^{\perp}=\HH,$  $E$ is closed, by Proposition \ref{Ecerrado}.
Now, since $\St \dotplus \mc W \subseteq \St \dotplus (\M \dotplus \N),$ it follows that $1=c_0(\St, \mc W) \leq c_0(\St, \M\dotplus \N).$ Then $c_0(\St, \M\dotplus \N)=1$ and $\M+\N+\St$ is not closed. But, since $\M\dotplus \N,$ $\M\dotplus \St$ and $\N \dotplus \St$ are closed, if $E^* \in \Id(\HH)$ then, by Proposition \ref{estrella}, $E^*=P_{\N^{\perp} \cap \St^{\perp},\M^{\perp} \cap \St^{\perp},\M^{\perp} \cap \N^{\perp}}$ and, by Proposition \ref{Ecerrado}, $\ol{\N+\St}+\ol{\M+\St}+\ol{\M+\N}=\M+\N+\St$ is closed, which is absurd. Hence $E^* \not \in \Id(\HH).$
\end{example}

The same example provides an idempotent $F$ such that $\ol{F}\notin \Id(\HH)$ and also a linear relation $T \not \in \Id(\HH)$ such that $T^* \in \Id(\HH).$

\begin{example} \label{exclausura} Let $\HH$ and $\M, \N, \St$ be as in Example \ref{starnoid}. Set $E:=P_{\M,\N,\St}$ and $F:=P_{\N^{\perp} \cap \St^{\perp},\M^{\perp} \cap \St^{\perp},\M^{\perp} \cap \N^{\perp}}.$ Then $E, F \in \Id(\HH)$ and since $\M\dotplus \N,$ $\M\dotplus \St$ and $\N \dotplus \St$ are closed,
$$E=P_{\M+\St, \N+\St} \cap ((\M+\N) \times \HH)=P_{\ol{\M+\St}, \ol{\N+\St}} \cap (\ol{\M+\N} \times \HH)=F^*.$$
Hence $E^*=\ol{F}.$ By Example \ref{starnoid}, $\ol{F}  \not \in \Id(\HH).$

On the other hand, set $T:=\ol{F}$ then $T \not \in \Id(\HH)$ and $T^*=F^*=E \in \Id(\HH).$
\end{example}

\bigskip
If $E =P_{\M,\N,\St}$ then the inclusions 
\begin{equation} \label{inclusion}
P_{\ol{\M}, \ol{\N}} \ \hat{+} \ (\{0\} \times \ol{\St}) \subseteq \ol{E} \subseteq P_{\ol{\M+\St}, \ol{\N+\St}} \cap (\ol{\M+\N} \times \HH)
\end{equation}
always hold with equalities when $E$ is a semi-projection. In the next proposition we provide conditions to get equalities in (\ref{inclusion}).

\begin{proposition} \label{clausurares}  Let $E =P_{\M,\N,\St}.$ Then:
	\begin{enumerate}
		\item [1.] If  $E^* \in \Id(\HH)$ then $\ol{E} = P_{\ol{\M+\St}, \ol{\N+\St}} \cap (\ol{\M+\N} \times \HH).$
		\item [2.] $\ol{E}=P_{\ol{\M}, \ol{\N}} \ \hat{+} \ (\{0\} \times \ol{\St})$ if and only if $\M^{\perp}+\N^{\perp}+\St^{\perp} $ is closed.
		\item [3.] If $E^* \in \Id(\HH)$ and $\M^{\perp}+\N^{\perp}+\St^{\perp} $ is closed then there is equality in \eqref{inclusion} and $$\ol{E}=P_{\ol{\M} + \ol{\N}\cap \ol{\St}, \ol{\N}+\ol{\M}\cap \ol{\St}, \ol{\St}+\ol{\M}\cap\ol{\N}}.$$ Moreover,   $$\ol{E}=P_{\ol{\M}+\ol{\St}, \ol{\N}+\ol{\St}} \cap ((\ol{\M}+\ol{\N}) \times \HH).$$
	\end{enumerate} 
\end{proposition}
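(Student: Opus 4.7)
The plan is to handle the three items in order, each as a short consequence of earlier material in the paper.

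Item 1 needs no work on its own: Theorem \ref{PropStarcl} already expresses the hypothesis $E^{*}\in\Id(\HH)$ as the conjunction of the displayed formula for $\ol{E}$ with closedness of $\ol{\M+\St}+\ol{\N+\St}+\ol{\M+\N}$, so the formula simply drops out.

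For item 2, I will invoke Corollary \ref{Tclosed2} applied to $E=P_{\M,\N}\ \hat{+}\ (\{0\}\times\St)$; that corollary states exactly the desired equivalence. (If one prefers not to appeal to it, the same argument runs in two lines: $\ol{P_{\M,\N}}=P_{\ol{\M},\ol{\N}}$ by Proposition \ref{L2}, $\ol{\{0\}\times\St}=\{0\}\times\ol{\St}$ trivially, and for any two subspaces $A,B$ of a product Hilbert space one has $\ol{A\ \hat{+}\ B}=\ol{A}\ \hat{+}\ \ol{B}$ as soon as the right-hand side is closed, while Lemma \ref{Tclosed}(2) characterises closedness of $P_{\ol{\M},\ol{\N}}\ \hat{+}\ (\{0\}\times\ol{\St})$ as closedness of $\M^{\perp}+\N^{\perp}+\St^{\perp}$.) The converse direction is immediate since $\ol{E}$ is automatically closed.

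Item 3 is where the first two items interact. Under both hypotheses, items 1 and 2 furnish two different expressions for $\ol{E}$: the super-idempotent form $P_{\ol{\M},\ol{\N}}\ \hat{+}\ (\{0\}\times\ol{\St})$ from item 2 (super-idempotent by Corollary \ref{supcor1}) and the sub-idempotent form $P_{\ol{\M+\St},\ol{\N+\St}}\cap(\ol{\M+\N}\times\HH)$ from item 1 (sub-idempotent by Corollary \ref{subcor1}). Thus $\ol{E}$ is simultaneously sub- and super-idempotent, hence idempotent, and Corollary \ref{IP1} forces $\ol{\M},\ol{\N},\ol{\St}$ to satisfy the IC. The one remaining set-theoretic verification is that under the IC the three pairwise intersections $\ol{\M}\cap\ol{\N}$, $\ol{\M}\cap\ol{\St}$, $\ol{\N}\cap\ol{\St}$ all coincide: each sits inside $(\ol{\M}+\ol{\N})\cap\ol{\St}=\ol{\M}\cap\ol{\N}$, while the IC also yields $\ol{\M}\cap\ol{\N}\subseteq\ol{\St}$, which gives the reverse inclusions. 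This collapse makes $\ol{\M}+\ol{\N}\cap\ol{\St}=\ol{\M}$, and symmetrically for the other two summands, so the first displayed formula reduces to $P_{\ol{\M},\ol{\N},\ol{\St}}=\ol{E}$; the second displayed formula is merely the dual presentation $P_{\M,\N,\St}=P_{\M+\St,\N+\St}\cap((\M+\N)\times\HH)$ recorded after Proposition \ref{the IC equiv}, applied to $\ol{\M},\ol{\N},\ol{\St}$.

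The only mild conceptual obstacle is spotting that items 1 and 2, when combined, trap $\ol{E}$ inside $\Id(\HH)$; after that observation, everything reduces to a routine manipulation of the IC.
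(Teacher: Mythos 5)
Items 1 and 2 are correct and coincide with the paper's proof (cite Theorem \ref{PropStarcl} and Corollary \ref{Tclosed2}), and your observation in item 3 that the two resulting representations make $\ol{E}$ simultaneously sub- and super-idempotent, hence idempotent, is exactly the paper's starting point. The gap is in the next step: Corollary \ref{IP1} does \emph{not} force $\ol{\M},\ol{\N},\ol{\St}$ to satisfy the IC. That corollary says the set of idempotents equals the set of relations of the form $P_{\M',\N'}\ \hat{+}\ (\{0\}\times\St')$ for \emph{some} triple satisfying the IC; it does not say that every representation of an idempotent in that form has an IC triple (for instance $I\ \hat{+}\ (\{0\}\times\St)$ is idempotent for any $\St$, yet $(\HH+\{0\})\cap\St=\St$ while $\HH\cap\{0\}=\{0\}$). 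The IC can genuinely fail for the closures under the hypotheses of item 3: take $\St=\{0\}$ and $\M,\N$ with $\M\cap\N=\{0\}$ but $\ol{\M}\cap\ol{\N}\neq\{0\}$; then $E=P_{\M,\N}$ satisfies both hypotheses, while $(\ol{\M}+\ol{\N})\cap\ol{\St}=\{0\}\neq\ol{\M}\cap\ol{\N}$. Consequently your ``collapse'' of the displayed formula to $P_{\ol{\M},\ol{\N},\ol{\St}}$ is not available (in this example $\mul\ol{E}=\ol{\St}+\ol{\M}\cap\ol{\N}\supsetneq\ol{\St}$, and the notation $P_{\ol{\M},\ol{\N},\ol{\St}}$ is not even licensed by the paper's convention); the proposition states the formula with the corrected triple precisely for this reason.

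The repair is short and is what the paper does: having shown $\ol{E}\in\Id(\HH)$, write $\ol{E}=P_{\ker(I-\ol{E}),\ker\ol{E},\mul\ol{E}}$ (Corollary \ref{corid}) and compute the three subspaces from the representation $\ol{E}=P_{\ol{\M},\ol{\N}}\ \hat{+}\ (\{0\}\times\ol{\St})$ via Lemmas \ref{propmul} and \ref{propmul2}: $\ker(I-\ol{E})=\ol{\M}+\ol{\N}\cap\ol{\St}$, $\ker\ol{E}=\ol{\N}+\ol{\M}\cap\ol{\St}$ and $\mul\ol{E}=\ol{\St}+\ol{\M}\cap\ol{\N}$. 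This is exactly the first displayed formula. The second follows likewise from $\ol{E}=P_{\ran\ol{E},\ran(I-\ol{E})}\cap(\dom\ol{E}\times\HH)$ together with $\ran\ol{E}=\ol{\M}+\ol{\St}$, $\ran(I-\ol{E})=\ol{\N}+\ol{\St}$ and $\dom\ol{E}=\ol{\M}+\ol{\N}$; neither formula requires the IC for $\ol{\M},\ol{\N},\ol{\St}$.
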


\begin{proof} $1$: Use Theorem \ref{PropStarcl}.

$2$: Use Corollary \ref{Tclosed2}.

$3$: From $1$ and $2,$ $\ol{E}$ is sub- and super-idempotent. Then $\ol{E} \in \Id(\HH)$ and Corollary \ref{corclausura} gives the formula for $\ol{E}.$ Finally, using Lemma \ref{propmul} we get that
$\ker \ol{E}=\ol{\N} + \ol{\M}\cap \ol{\St},$ $\ker(I-\ol{E})=\ol{\M}+\ol{\N}\cap \ol{\St}$ and $\mul \ol{E}=\ol{\St}+\ol{\M}\cap\ol{\N},$ $\ran \ol{E}=\ol{\M}+\ol{\St},$ $\ran(I-\ol{E})=\ol{\N}+\ol{\St}$ and $\dom \ol{E}=\ol{\M}+\ol{\N}.$
\end{proof}

If $E$ is a closed semi-projection then $E^*$ is a semi-projection, and $\ran E$ and $\ran(I-E)=\ker E$ are both closed. In general, this is no longer true for closed idempotents. In what follows, we characterize those closed idempotents $E$ with $\ran E$ and $\ran(I-E)$ closed such that $E^*$ is idempotent. In this case, by Theorem \ref{closedrange}, $\ran E^*$ and $\ran (I-E^*)$ are closed.

\begin{proposition} \label{closedstar2} Consider $E=P_{\M,\N,\St}$ such that E is closed. Then the following are equivalent:
	\begin{enumerate}
		\item $E^* \in \Id(\HH),$ $\ran E$ and $\ran(I-E)$ are closed;
		\item $\ol{\M+\N} +\St$ is closed and $\ol{\M+\N}\cap \St= \M\cap\N.$ 
	\end{enumerate} 
\end{proposition}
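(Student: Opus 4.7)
The plan is to translate both conditions into the language of orthogonal complements, setting $\mc A:=\M^{\perp}$, $\mc B:=\N^{\perp}$, $\mc C:=\St^{\perp}$. Since $E$ is closed, Proposition~\ref{Ecerrado} gives that $\M,\N,\St$ are closed and $\mc A+\mc B+\mc C$ is closed. By Lemma~\ref{propmul} (applied to $E$ and to $I-E=P_{\N,\M,\St}$ via Corollary~\ref{coric}), $\ran E=\M+\St$ and $\ran(I-E)=\N+\St$, and Theorem~\ref{prop angulos} identifies their closedness with that of $\mc A+\mc C$ and $\mc B+\mc C$. The IC forces $\M\cap\St=\N\cap\St=\M\cap\N$ (each of the two intersections is sandwiched between $\M\cap\N$ and $(\M+\N)\cap\St=\M\cap\N$). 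Since $\ol{\M+\N}=(\mc A\cap\mc B)^{\perp}$, closedness of $\ol{\M+\N}+\St$ is equivalent to closedness of $\mc A\cap\mc B+\mc C$, and the identity $\ol{\M+\N}\cap\St=\M\cap\N$ becomes $\ol{\mc A\cap\mc B+\mc C}=\ol{\mc A+\mc B}$. By Proposition~\ref{estrella}, the condition $E^{*}\in\Id(\HH)$ is $(\mc A+\mc B)\cap\mc C=\mc A\cap\mc C+\mc B\cap\mc C$.

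For $(1)\Rightarrow(2)$, I first apply Proposition~\ref{rangeop} to the closed operator ranges $\mc A+\mc C$ and $\mc B+\mc C$, whose sum $\mc A+\mc B+\mc C$ is closed, obtaining $((\mc A+\mc C)\cap(\mc B+\mc C))^{\perp}=(\M\cap\St)+(\N\cap\St)=\M\cap\N=(\mc A+\mc B)^{\perp}$, hence $(\mc A+\mc C)\cap(\mc B+\mc C)=\ol{\mc A+\mc B}$. Given $x\in\ol{\mc A+\mc B}$, write $x=a+c_{1}=b+c_{2}$ with $a\in\mc A$, $b\in\mc B$, $c_{i}\in\mc C$; then $a-b=c_{2}-c_{1}\in(\mc A+\mc B)\cap\mc C=\mc A\cap\mc C+\mc B\cap\mc C$ by hypothesis, so $c_{2}-c_{1}=a_{0}+b_{0}$ with $a_{0}\in\mc A\cap\mc C$, $b_{0}\in\mc B\cap\mc C$, and $u:=a-a_{0}=b+b_{0}$ lies in $\mc A\cap\mc B$, giving $x=u+(a_{0}+c_{1})\in\mc A\cap\mc B+\mc C$. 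This proves $\mc A\cap\mc B+\mc C=\ol{\mc A+\mc B}$, which is closed (the first half of (2)); taking perps yields $\ol{\M+\N}\cap\St=\M\cap\N$.

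For $(2)\Rightarrow(1)$, the hypothesis gives $\mc A\cap\mc B+\mc C=\ol{\mc A+\mc B}$. Every $a\in\mc A$ is then of the form $u+c$ with $u\in\mc A\cap\mc B$ and $c=a-u\in\mc A\cap\mc C$, so $\mc A=\mc A\cap\mc B+\mc A\cap\mc C$, and symmetrically $\mc B=\mc A\cap\mc B+\mc B\cap\mc C$. These identities give $\mc A+\mc C=\mc A\cap\mc B+\mc C$ closed and similarly $\mc B+\mc C$ closed, so $\ran E$ and $\ran(I-E)$ are closed by Theorem~\ref{prop angulos}. For any $x\in(\mc A+\mc B)\cap\mc C$, writing $x=a+b$ and decomposing $a=u_{1}+v_{1}$, $b=u_{2}+v_{2}$ as above gives $x=(u_{1}+u_{2})+v_{1}+v_{2}$ with $u_{1}+u_{2}\in\mc A\cap\mc B$ and $v_{1}+v_{2}\in\mc C$; since $x\in\mc C$, the vector $u_{1}+u_{2}$ lies in $\mc A\cap\mc B\cap\mc C\subseteq\mc A\cap\mc C$, whence $x\in\mc A\cap\mc C+\mc B\cap\mc C$. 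Proposition~\ref{estrella} then yields $E^{*}\in\Id(\HH)$.

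The main obstacle is the nontrivial inclusion $\ol{\mc A+\mc B}\subseteq\mc A\cap\mc B+\mc C$ in $(1)\Rightarrow(2)$: it genuinely requires both the range-closedness (to identify $(\mc A+\mc C)\cap(\mc B+\mc C)$ with $\ol{\mc A+\mc B}$ through Proposition~\ref{rangeop}, which depends critically on the IC identity $\M\cap\St=\N\cap\St=\M\cap\N$) and the idempotency of $E^{*}$ (to split the error term $a-b$). Once that inclusion is in hand, the dualities between (1) and (2) become purely formal.
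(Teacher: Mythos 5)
Your proof is correct, but it takes a genuinely different route from the paper's. You dualize the whole statement into lattice identities among $\mc A=\M^{\perp}$, $\mc B=\N^{\perp}$, $\mc C=\St^{\perp}$ and argue by elementary vector decompositions, using only Proposition \ref{estrella}, Proposition \ref{rangeop} and the duality in Theorem \ref{prop angulos}. The paper instead works on the primal side with the relations themselves: for $i)\Rightarrow ii)$ it invokes Theorem \ref{PropStarcl} to write $E=\ol{E}=P_{\M+\St,\N+\St}\cap(\ol{\M+\N}\times\HH)$ and extracts $\St\cap\ol{\M+\N}\subseteq\M+\N$ by comparing domains, and for $ii)\Rightarrow i)$ it deduces closedness of $\ran E=\M+\St$ from the Friedrichs-angle monotonicity $c(\M,\St)\leq c(\ol{\M+\N},\St)<1$ (Lemma \ref{lemaangulo} together with Theorem \ref{prop angulos}), then verifies the hypotheses of Theorem \ref{PropStarcl} directly. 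Your approach buys a more self-contained argument with no angle estimates, in which the closedness of $\ran E$ falls out of the algebraic identity $\mc A+\mc C=\mc A\cap\mc B+\mc C$; the paper's approach keeps explicit control of $\ol{E}$ as a relation, which it reuses in the surrounding results. One small point to patch: in $(1)\Rightarrow(2)$ you establish only the inclusion $\ol{\mc A+\mc B}\subseteq \mc A\cap\mc B+\mc C$ and then assert equality, but the reverse inclusion is what makes $\mc A\cap\mc B+\mc C$ closed; it is immediate, since the IC gives $\M\cap\N=(\M+\N)\cap\St\subseteq\St$ and hence $\mc C\subseteq(\M\cap\N)^{\perp}=\ol{\mc A+\mc B}$, but it should be said.
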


\begin{proof} 
If $i)$ holds, by Corollaries \ref{propidemL} and \ref{kerran2}, $\M+\St$ and $\N+\St$ are closed and, applying Theorem \ref{PropStarcl},  $\ol{\M+\N} +\St=\ol{\M+\N}+\ol{\N+\St}+\ol{\M+\St}$ is closed. On the other hand,  by Theorem \ref{PropStarcl} again, $E=\ol{E}=P_{\M+\St, \N+\St} \cap (\ol{\M+\N} \times \HH)$ then $\M+\N=\dom E=(\M+\N+\St) \cap \ol{\M+\N}=\M+\N+\St \cap \ol{\M+\N},$  so that $\St \cap \ol{\M+\N} \subseteq \M+\N.$ Then $\St \cap \ol{\M+\N}=\St \cap (\M+\N)=\M \cap \N,$ where we used that $\M, \N$ and $\St$ satisfy the IC.
	
	Conversely, if $ii)$ holds then $\ol{\M+\N} \cap \St = \M\cap\N=\M \cap \St=\N \cap \St,$ because $\M, \N$ and $\St$ satisfy the IC. Then, by Lemma \ref{lemaangulo} and Theorem \ref{prop angulos}, $$c(\M,\St)\leq c(\ol{\M+\N}, \St)<1.$$ So that, by Theorem \ref{prop angulos}, $\ran E=\M+\St$ is closed. Likewise, $\ran(I-E)=\N+\St$ is closed. 
	Therefore, $\ol{\M+\St}+\ol{\N+\St}+\ol{\M+\N}=\ol{\M+\N}+\St$ is closed. 	Also, since $E \subseteq P_{\M+\St, \N+\St} \cap (\ol{\M+\N} \times \HH),$ $\dom(P_{\M+\St, \N+\St} \cap (\ol{\M+\N} \times \HH))=\M+\N+\St \cap \ol{\M+\N}=\M+\N=\dom E$ and $\mul(P_{\M+\St, \N+\St} \cap (\ol{\M+\N} \times \HH))=\M\cap\St+\N \cap \St=\St=\mul E,$ using Theorem \ref{PropStarcl}, it follows that  $E^*\in \Id(\HH).$
\end{proof}

\begin{theorem} \label{thmclausura} Let $E =P_{\M,\N,\St}.$ Then the following are equivalent:
	\begin{enumerate}
		\item $E^* \in \Id(\HH),$ $\ol{E}=P_{\ol{\M}, \ol{\N}} \ \hat{+}  \ (\{0\}\times \ol{\St}),$ $\ran \ol{E}$ and $\ran(I-\ol{E})$ are closed;
		\item $\M^{\perp}+\N^{\perp}+\St^{\perp}$ and $\ol{\M+\N} +\ol{\St}$ are closed and $\ol{\M+\N} \cap \ol{\St} =\ol{\M} \cap \ol{\St}+\ol{\N}\cap\ol{\St}.$ 
	\end{enumerate} 
In this case, $\ol{E}\in \Id(\HH)$ and 
\begin{equation} \label{IdEq}
\ol{E}=P_{\ol{\M}+\ol{\N}\cap \ol{\St}, \ol{\N}+\ol{\M}\cap \ol{\St}, \ol{\St}+\ol{\M}\cap \ol{\N}}.
\end{equation}
\end{theorem}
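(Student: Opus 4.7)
The plan is to reduce the equivalence to a single application of Proposition \ref{closedstar2}, this time to $\ol{E}$ itself, once we know $\ol{E}\in\Id(\HH)$. Proposition \ref{clausurares}(2) already equates the first clauses of $i)$ and $ii)$: $\M^{\perp}+\N^{\perp}+\St^{\perp}$ is closed if and only if $\ol{E}=P_{\ol{\M},\ol{\N}} \ \hat{+} \ (\{0\}\times\ol{\St})$. So the task reduces to the equivalence of ``$E^*\in\Id(\HH)$ with $\ran\ol{E}$ and $\ran(I-\ol{E})$ closed'' and ``$\ol{\M+\N}+\ol{\St}$ closed together with $\ol{\M+\N}\cap\ol{\St}=\ol{\M}\cap\ol{\St}+\ol{\N}\cap\ol{\St}$,'' assuming that $\ol{E}$ has the super-idempotent representation above.

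For $i)\Rightarrow ii)$, Proposition \ref{estrella} applied to $E^*\in\Id(\HH)$ gives $E^*=P_{\N^{\perp}\cap\St^{\perp},\,\M^{\perp}\cap\St^{\perp},\,\M^{\perp}\cap\N^{\perp}}$, and Proposition \ref{propestrella} applied to this form yields the sub-idempotent representation $\ol{E}=P_{\ol{\M+\St},\ol{\N+\St}} \cap (\ol{\M+\N}\times\HH)$. Combined with the super-idempotent form from $i)$, this forces $\ol{E}\in\Id(\HH)$, so $(\ol{\M},\ol{\N},\ol{\St})$ satisfies the IC and $\ol{E}=P_{\ol{\M},\ol{\N},\ol{\St}}$ in the triple-subscript notation. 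Proposition \ref{closedstar2} applied to this closed idempotent then delivers $\ol{\M+\N}+\ol{\St}$ closed and $\ol{\M+\N}\cap\ol{\St}=\ol{\M}\cap\ol{\N}$. Since the IC for $(\ol{\M},\ol{\N},\ol{\St})$ forces $\ol{\M}\cap\ol{\St}=\ol{\N}\cap\ol{\St}=\ol{\M}\cap\ol{\N}$ (by Proposition \ref{subidem}), we get $\ol{\M}\cap\ol{\St}+\ol{\N}\cap\ol{\St}=\ol{\M}\cap\ol{\N}=\ol{\M+\N}\cap\ol{\St}$, completing $ii)$.

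The reverse $ii)\Rightarrow i)$ hinges on showing $\ol{E}\in\Id(\HH)$, which is the main obstacle. With the super-idempotent form in hand, Proposition \ref{sub} reduces this to verifying $\ker(I-\ol{E})=\ran\ol{E}\cap\dom\ol{E}$. Lemma \ref{propmul} identifies $\ker(I-\ol{E})=\ol{\M}+\ol{\N}\cap\ol{\St}$ and $\ran\ol{E}\cap\dom\ol{E}=(\ol{\M}+\ol{\St})\cap(\ol{\M}+\ol{\N})$, which equals $\ol{\M}+\ol{\St}\cap(\ol{\M}+\ol{\N})$ by the modular law. Using $\ol{\M}+\ol{\N}\subseteq\ol{\M+\N}$ together with the third hypothesis of $ii)$, $\ol{\St}\cap(\ol{\M}+\ol{\N})=\ol{\M}\cap\ol{\St}+\ol{\N}\cap\ol{\St}$, so both sides of the desired identity simplify to $\ol{\M}+\ol{\N}\cap\ol{\St}$. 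Hence $\ol{E}\in\Id(\HH)$ and $(\ol{\M},\ol{\N},\ol{\St})$ satisfies the IC. Proposition \ref{closedstar2} applied to $\ol{E}=P_{\ol{\M},\ol{\N},\ol{\St}}$ now has its hypothesis $ii)$: $\ol{\ol{\M}+\ol{\N}}+\ol{\St}=\ol{\M+\N}+\ol{\St}$ is closed by assumption, and $\ol{\M+\N}\cap\ol{\St}=\ol{\M}\cap\ol{\N}$ follows from the third hypothesis together with the IC-collapse of the two-way intersections (Proposition \ref{subidem}), exactly as in the forward direction. This delivers $E^*=(\ol{E})^*\in\Id(\HH)$ and the closedness of $\ran\ol{E}$ and $\ran(I-\ol{E})$, proving $i)$. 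The formula \eqref{IdEq} is then immediate from Proposition \ref{clausurares}(3), whose two hypotheses ($E^*\in\Id(\HH)$ and $\M^{\perp}+\N^{\perp}+\St^{\perp}$ closed) are now known to hold.
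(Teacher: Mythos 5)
Your overall strategy -- establish that $\ol{E}\in \Id(\HH)$ and then feed $\ol{E}$ into Proposition \ref{closedstar2} -- is exactly the paper's, and several steps are sound: the reduction of the two ``first clauses'' via Lemma \ref{Tclosed}, the derivation of the sub-idempotent representation of $\ol{E}$ from $E^*\in\Id(\HH)$, and (in the converse) the direct verification of $\ker(I-\ol{E})=\ran \ol{E}\cap\dom \ol{E}$ using the modular law, which is a clean alternative to the paper's IC computation. However, there is a genuine error at the pivot of both directions: from $\ol{E}\in\Id(\HH)$ and $\ol{E}=P_{\ol{\M},\ol{\N}}\ \hat{+}\ (\{0\}\times\ol{\St})$ you conclude that $(\ol{\M},\ol{\N},\ol{\St})$ satisfies the IC and that $\ol{E}=P_{\ol{\M},\ol{\N},\ol{\St}}$. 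What actually follows (Lemma \ref{propmul} plus Corollary \ref{corid}) is that the triplet $(\ker(I-\ol{E}),\ker\ol{E},\mul\ol{E})=(\ol{\M}+\ol{\N}\cap\ol{\St},\ \ol{\N}+\ol{\M}\cap\ol{\St},\ \ol{\St}+\ol{\M}\cap\ol{\N})$ satisfies the IC -- this is precisely \eqref{IdEq} -- not the triplet $(\ol{\M},\ol{\N},\ol{\St})$ itself. The two differ in general: take $\St=\{0\}$ and $\M,\N$ with $\M\cap\N=\{0\}$ but $\ol{\M}\cap\ol{\N}\neq\{0\}$. Then $E=P_{\M,\N,\{0\}}=P_{\M,\N}$ satisfies all of $i)$, yet $(\ol{\M}+\ol{\N})\cap\ol{\St}=\{0\}\neq\ol{\M}\cap\ol{\N}$, so the IC fails for the closures and $\mul\ol{E}=\ol{\M}\cap\ol{\N}\neq\ol{\St}$.

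This error propagates. In the forward direction you invoke Proposition \ref{closedstar2} for ``$P_{\ol{\M},\ol{\N},\ol{\St}}$'' to get $\ol{\M+\N}\cap\ol{\St}=\ol{\M}\cap\ol{\N}$, and then use the ``IC-collapse'' $\ol{\M}\cap\ol{\St}=\ol{\N}\cap\ol{\St}=\ol{\M}\cap\ol{\N}$; in the example above both of these assertions are false (the left sides are $\{0\}$, the right side is not), even though the theorem's condition $ii)$ does hold there. The same false identity is used to verify the hypothesis of Proposition \ref{closedstar2} in the converse. To repair the argument you must apply Proposition \ref{closedstar2} to $\ol{E}$ written with its \emph{correct} triplet from \eqref{IdEq}; then its condition $ii)$ reads ``$\ol{\M+\N}+\ol{\St}$ closed'' (since $\ker(I-\ol{E})+\ker\ol{E}$ has closure $\ol{\M+\N}$ and $\mul\ol{E}\supseteq\ol{\St}$ differs from $\ol{\St}$ only by $\ol{\M}\cap\ol{\N}\subseteq\ol{\M+\N}$) together with $\ol{\M+\N}\cap(\ol{\St}+\ol{\M}\cap\ol{\N})=\ker(I-\ol{E})\cap\ker\ol{E}$, and a short computation -- the one the paper carries out -- shows this is equivalent to the stated condition $\ol{\M+\N}\cap\ol{\St}=\ol{\M}\cap\ol{\St}+\ol{\N}\cap\ol{\St}$. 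Your final appeal to Proposition \ref{clausurares}(3) for \eqref{IdEq} is fine.
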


\begin{proof} Assume that item $i)$ holds.  Then $\ol{E}$ is super-idempotent and, by Lemma \ref{Tclosed},  $\M^{\perp}+\N^{\perp}+\St^{\perp}$ is closed. Since $E^* \in \Id(\HH)$ then $\ol{E}=(E^*)^*$ is sub-idempotent and hence $\ol{E} \in \Id(\HH).$ Therefore $\ol{E}=P_{\ker(I-\ol{E}),\ker\ol{E},\mul \ol{E}}$ and applying Lemma \ref{propmul}, formula \eqref{IdEq} follows.

By Lemma \ref{propmul}, $\ran \ol{E}=\ol{\M}+\ol{\St}$ and $\ran(I-\ol{E})=\ol{\N}+\ol{\St}.$ Then  $\ol{\M}+\ol{\St}$ and $\ol{\N}+\ol{\St}$ are closed and, by Propositions \ref{estrella} and \ref{Ecerrado} applied to $\ol{E},$ $\ol{\N+\St}+\ol{\M+\St}+\ol{\M+\N}=\ol{\N}+\ol{\St}+\ol{\M}+\ol{\M+\N}=\ol{\St}+\ol{\M+\N}$ is closed.
On the other hand, by Corollary \ref{corclausura},	$$\ol{E}=P_{\ol{\M+\St}\cap \ol{\M+\N}, \ \ol{\N+\St}\cap \ol{\M+\N}, \ \ol{\N + \St}\cap \ol{\M+\St}}.$$  Then 
\begin{align*}
\ol{\M+\N}\cap \ol{\St} &= \ol{\M+\N} \cap \ol{\M+\St} \cap  \ol{\N+\St} \cap \ol{\St}=\ker(I-\ol{E})\cap \ker \ol{E} \cap \ol{\St}\\
&=(\ol{\M}+\ol{\N}\cap \ol{\St}) \cap (\ol{\N}+\ol{\M}\cap \ol{\St}) \cap \ol{\St}\\
&=\ol{\M} \cap \ol{\St}+\ol{\N}\cap\ol{\St}.
\end{align*}

Conversely, assume that item $ii)$ holds. By Lemma \ref{Tclosed}, $$\ol{E}=P_{\ol{\M}, \ol{\N}} \ \hat{+}  \ (\{0\}\times \ol{\St}).$$ Since $\ol{\M+\N} \cap \ol{\St}= \ol{\M} \cap \ol{\St}+\ol{\N}\cap\ol{\St},$ it follows that $ \ol{\M+\N} \cap \ol{\St}=(\ol{\M}+\ol{\N}) \cap \ol{\St}.$
From this fact it can be seen that 
$$\ker(I-\ol{E})=\ol{\M}+\ol{\N}\cap \ol{\St}, \ \ker \ol{E}=\ol{\N}+\ol{\M}\cap \ol{\St} \mbox{ and } \mul \ol{E}=\ol{\St}+\ol{\M}\cap \ol{\N}$$ satisfy the IC. Then $\ol{E} \in \Id(\HH).$ 

Since $\mul \ol{E}$ is closed, it follows that
\begin{align*}
\ol{\ker(I-\ol{E})+\ker \ol{E}} + \ol{\mul \ol{E}}&=\ol{\M+\N}+\ol{\St}+\ol{\M}\cap \ol{\N}\\
&=\ol{\M+\N}+\ol{\St}
\end{align*}
is closed. In a similar fashion, it can be seen that
\begin{align*}
\ol{\ker(I-\ol{E})+\ker \ol{E}} \cap  \ol{\mul \ol{E}} = \ker(I-\ol{E}) \cap \ker \ol{E}.
\end{align*}
 Then, by Proposition \ref{closedstar2} applied to $\ol{E},$ $E^* \in \Id(\HH)$ and $\ran \ol{E},$ $\ran(I-\ol{E})$ are closed.
\end{proof}

\begin{corollary} Let $E=P_{\M,\N,\St}$ such that $ \ol{\M+\N} \cap \ol{\St}= \ol{\M}\cap\ol{\N},$ $\ol{\M+\N} +\ol{\St}$ and $\M^{\perp}+\St^{\perp}+\N^{\perp}$ are closed.
Then:
\begin{enumerate}
\item[1.]  $E^* \in \Id(\HH);$ 
\item[2.] $\ol{E}=P_{\ol{\M},\ol{\N},\ol{\St}};$
\item[3.]  $\ran \ol{E}$ and $\ran(I-\ol{E})$ are closed.
\end{enumerate}
\end{corollary}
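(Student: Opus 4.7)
The plan is to show that the hypotheses here imply condition $(ii)$ of Theorem~\ref{thmclausura}, at which point the three conclusions of that theorem yield almost exactly what we want, with one small extra simplification needed for item~$2$.

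Two of the three conditions in Theorem~\ref{thmclausura}$(ii)$, namely that $\M^{\perp}+\St^{\perp}+\N^{\perp}$ and $\ol{\M+\N}+\ol{\St}$ are closed, are part of our hypotheses. So the first step is to verify the remaining identity
\[
\ol{\M+\N}\cap \ol{\St}= \ol{\M}\cap\ol{\St}+\ol{\N}\cap\ol{\St}
\]
from the assumption $\ol{\M+\N}\cap \ol{\St}=\ol{\M}\cap\ol{\N}$. For $\supseteq$, any element of $\ol{\M}\cap\ol{\St}+\ol{\N}\cap\ol{\St}$ lies in $\ol{\M}+\ol{\N}\subseteq \ol{\M+\N}$ and in $\ol{\St}$, hence in $\ol{\M+\N}\cap\ol{\St}$. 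For $\subseteq$, the hypothesis gives $\ol{\M+\N}\cap\ol{\St}=\ol{\M}\cap\ol{\N}\subseteq \ol{\St}$, so any $x\in \ol{\M}\cap\ol{\N}$ already lies in $\ol{\M}\cap\ol{\St}$, and then $x=x+0$ exhibits the required decomposition.

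With $(ii)$ in place, Theorem~\ref{thmclausura} immediately delivers item~$1$ ($E^{*}\in \Id(\HH)$) and item~$3$ (closedness of $\ran \ol{E}$ and $\ran(I-\ol{E})$), as well as the formula
\[
\ol{E}=P_{\ol{\M}+\ol{\N}\cap\ol{\St},\ \ol{\N}+\ol{\M}\cap\ol{\St},\ \ol{\St}+\ol{\M}\cap\ol{\N}}.
\]
To convert this into item~$2$, I would observe that the hypothesis $\ol{\M+\N}\cap\ol{\St}=\ol{\M}\cap\ol{\N}$ forces $\ol{\M}\cap\ol{\N}\subseteq \ol{\St}$; together with $\ol{\N}\cap\ol{\St}\subseteq \ol{\M+\N}\cap\ol{\St}=\ol{\M}\cap\ol{\N}\subseteq \ol{\M}$ (and symmetrically $\ol{\M}\cap\ol{\St}\subseteq \ol{\N}$), each of the three sums in the triplet collapses to $\ol{\M}$, $\ol{\N}$, $\ol{\St}$ respectively. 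These same inclusions show $(\ol{\M}+\ol{\N})\cap \ol{\St}=\ol{\M}\cap\ol{\N}$, i.e.\ that $\ol{\M},\ol{\N},\ol{\St}$ satisfy the IC, which legitimizes the notation $P_{\ol{\M},\ol{\N},\ol{\St}}$ and completes item~$2$.

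There is no real obstacle here: the whole argument is the bookkeeping that turns the somewhat stronger but more symmetric IC-type condition in Theorem~\ref{thmclausura}$(ii)$ into the single equality $\ol{\M+\N}\cap\ol{\St}=\ol{\M}\cap\ol{\N}$. The only point requiring care is not to confuse $\ol{\M+\N}$ with $\ol{\M}+\ol{\N}$, but since the hypothesis is pinned on $\ol{\M+\N}\cap\ol{\St}=\ol{\M}\cap\ol{\N}$ and the final formula only uses $\ol{\M}+\ol{\N}$, both directions of the inclusion $\ol{\M}+\ol{\N}\subseteq \ol{\M+\N}$ suffice throughout.
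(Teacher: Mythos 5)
Your argument is correct and follows essentially the same route as the paper: both verify condition $ii)$ of Theorem \ref{thmclausura} from the hypothesis $\ol{\M+\N}\cap\ol{\St}=\ol{\M}\cap\ol{\N}$ (the paper does this by first showing $\ol{\M}\cap\ol{\N}=\ol{\M}\cap\ol{\St}=\ol{\N}\cap\ol{\St}$, you by checking the two inclusions directly, which amounts to the same observation), then apply that theorem and collapse the resulting triplet to $\ol{\M},\ol{\N},\ol{\St}$.
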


\begin{proof} Since $\ol{\M+\N}\cap \ol{\St}=\ol{\M}\cap \ol{\N}=\ol{\M}\cap \ol{\N}\cap \ol{\St}$ and also $(\ol{\M}\cap \ol{\St}+\ol{\N}\cap \ol{\St}) \subseteq \ol{\M+\N}\cap \ol{\St}=\ol{\M}\cap \ol{\N},$ then $\ol{\M}\cap \ol{\N}=\ol{\N}\cap \ol{\St}=\ol{\M} \cap \ol{\St}.$ We then apply Theorem \ref{thmclausura} to get that $E^* \in \Id(\HH),$  $\ol{E}=P_{\ol{\M},\ol{\N},\ol{\St}+\ol{\M}\cap\ol{\N}}=P_{\ol{\M},\ol{\N},\ol{\St}}$ and  $\ran \ol{E}$ and $\ran(I-\ol{E})$ are closed.	
\end{proof}

\subsection*{Acknowledgements}
M. L. Arias was supported in part by UBACyT (20020190100330BA) and FonCyT (PICT 2017-0883).
A. Maestripieri was supported in part by the Interdisciplinary Center for Applied Mathematics at Virginia Tech.
M. Contino, A. Maestripieri  and S. Marcantognini were
supported by CONICET PIP 11220200102127CO.

\end{document}